 \theoremstyle{plain}
\newtheorem{theo}{Theorem}[subsection]
\newtheorem{theore}{Theorem}[section]
\newtheorem{pr}[theo]{Proposition}
 \newtheorem{lem}[theo]{Lemma}
 \newtheorem{coro}[theo]{Corollary}
\theoremstyle{remark}
\newtheorem{rema}[theo]{Remark}
\newtheorem{rrema}{Remark}[section]
\theoremstyle{definition}
\newtheorem{defi}[theo]{Definition}
 \newcommand\lan{\langle}
\newcommand\ra{\rangle}
\newcommand\ob{^{-1}}
\newcommand\dmge{DM^{eff}_{gm}{}}
\newcommand\dmges{DM_{gm}^{eff}(U)}
\newcommand\dmes{DM^{eff}(U)}
\newcommand\cors{Cor(U)}
\newcommand\birs{DM_{gm}^{o}(U)}
\newcommand\birchs{\chow^o(U)}
\newcommand\obj{\operatorname{Obj}}
\newcommand\mo{\operatorname{Mor}}
\newcommand\id{\operatorname{id}}
\newcommand\isom{\operatorname{Isom}}
\newcommand\fgf{\mathcal{FGF}}
\newcommand\cu{\underline{C}}
\newcommand\du{\underline{D}}
\newcommand\ddu{\underline{\mathcal{D}}}
\newcommand\eu{\underline{E}}
\newcommand\eeu{\underline{\mathcal{E}}}
\newcommand\au{\underline{A}}
\newcommand\aau{\mathfrak{A}}
\newcommand\hu{\underline{H}}
\newcommand\hrt{{\underline{Ht}}}
\newcommand\hw{{\underline{Hw}}}
\newcommand\n{\mathbb{N}}
\newcommand\z{{\mathbb{Z}}}
\newcommand\af{\mathbb{A}}
\newcommand\rr{\mathcal{R}}
\newcommand\ns{\{0\}}
\DeclareMathOperator\inli{\varinjlim}
\newcommand\chow{Chow}
\newcommand\ab{Ab}
\DeclareMathOperator\co{\operatorname{Cone}}
\DeclareMathOperator\enom{\operatorname{End}}
\DeclareMathOperator\kar{\operatorname{Kar}}
\DeclareMathOperator\adfu{\operatorname{AddFun}}
\DeclareMathOperator\extr{\operatorname{Ext}}
\DeclareMathOperator\cha{\operatorname{char}}
\newcommand\w{{\mathfrak{w}}}
\newcommand\gd{\mathfrak{D}}
\newcommand\dmcs{DM_c(U)}
\newcommand\sss{{\mathcal{S}}}
\begin{document}

  \title{Non-commutative localizations of additive categories and weight structures; applications to birational motives} 
 \author{Mikhail V. Bondarko, Vladimir A. Sosnilo
 \thanks{ 
  The first author was supported  supported by RFBR
(grant no.  14-01-00393).  
The second author was supported by the Chebyshev Laboratory  (Department of
Mathematics and Mechanics, St. Petersburg State University)  under RF Government
grant 11.G34.31.0026 and by JSC "Gazprom Neft".} }
 \maketitle 
  
	\let\thefootnote\relax\footnote{ MSC2010 Primary: 18E35, 16S85, 14C15; Secondary: 18E30, 18E05, 14F42, 19E15, 16U20, 18G35, 16E20.}

\begin{abstract} 
In this paper we demonstrate that 
{\it non-commutative localizations}  
of arbitrary additive categories (generalizing those defined by Cohn in the setting of rings)
are closely (and naturally) related to weight structures.  Localizing an
arbitrary triangulated category $\cu$ by a set $S$ of  morphisms in the heart
$\hw$ of a weight structure $w$ on it one obtains a triangulated category endowed with
a weight structure $w'$.  The heart of $w'$ is a certain version of the idempotent completion
of the non-commutative localization  $\hw[S\ob]_{add}$   (of  $\hw$ by $S$).
The functor $\hw\to \hw[S\ob]_{add}$ is the
natural categorical version of Cohn's localization of a ring, i.e., is universal
among additive functors that make all elements of $S$ invertible. 
For any additive category $\au$, taking $\cu=K^b(\au)$  we obtain a very efficient
tool for computing $\au[S\ob]_{add}$; using it, we 
generalize the calculations of Gerasimov
and Malcolmson (made for rings only). 
 We also prove that  $\au[S\ob]_{add}$ coincides with the "abstract"
localization $\au[S\ob]$ (as constructed by Gabriel and Zisman) if $S$ contains
all identity morphisms of $\au$ and is closed with respect to direct sums.

We apply our results to certain categories of birational motives $\birs$
(generalizing those defined by Kahn and Sujatha). We define $\birs$ for
arbitrary $U$ as a certain localization of $K^b(\cors)$ and obtain a weight structure for
it. 
When $U$ is the spectrum of a perfect field, the weight structure obtained
is compatible with the corresponding Chow and Gersten weight structures defined by the
first author in previous papers.
For a general $U$ the result is completely new. It enables us to calculate the
Grothendieck group of $\birs$. 
The existence of the corresponding adjacent $t$-structure is also a new result over a general base scheme; its heart is a certain category of birational sheaves with transfers over $U$.
\end{abstract}

\tableofcontents

 \section*{Introduction}
 
 For an additive category $\au$ and some set $S\subset \mo(\au)$ of morphisms of
 $\au$ it is natural to ask the following questions:
 
 (i) does there
  exist an initial object in the category of those additive functors from $\au$
  to an additive category
  $\au'$ such that all the elements of $S$ become invertible in $\au'$? 
  
  (ii) provided that the initial object does exist, how can one describe its target $\au[S^{-1}]_{add}$?
  
  (iii) is $\au[S^{-1}]_{add}$ isomorphic to the ``abstract'' (i.e.,
  ``Gabriel-Zisman'') localization of $\au$ by $S$?
  
  In the case when $\au$ is the category of finitely generated free (left)
  modules over a associative unital ring $R$, the answers to questions (i) and
  (ii) are given by the theory of non-commutative localizations of rings
  introduced in \cite{cohn} (see \S\ref{scompco} and \S\ref{sadja} below for
  more references). In the current paper we extend the corresponding results to
  a 
  more general setting of an arbitrary additive category $\au$. We
  also recall that  the answer to (iii) is positive if $S$ contains all
  identity morphisms of $\au$ and is closed under 
  direct sums (this certainly implies that the answer to (i) is positive in general).
  
   We give a comprehensive description of $\au[S^{-1}]_{add}$. Our main
   ``computational'' tool 
   is the full embedding of $\au[S^{-1}]_{add}$ into the Verdier localization of
   $K^b(\au)$ (see \S\ref{snotata} below) by $S$. So we prove the following result. 
   
   \begin{theore} \label{taddlocu}
   
   Let $\au$ be a (small) 
additive category and let $S$ be an arbitrary
 subset of $\mo(\au)$.
   
   Denote by $\du$ the triangulated subcategory of $K^b(\au)$ 
   generated 
   (see \S\ref{snotata})
    by the cones of elements of $S$; denote by $l$ the Verdier localization functor $K^b(\au)\to \eu=K^b(\au)/\du$. 
    Denote by
$\au[S^{-1}]_{add}\subset \eu$ the full subcategory whose objects are those of $\au$; denote   
       by $u$ the functor $\au\to \au[S^{-1}]_{add}$ induced by $l$.

    Then 
    $u$ is characterized by the following universal property:  an additive functor $F:\au\to \au'$  factorizes through $u$ if and only if $F$ converts all elements of $S$ into isomorphisms; if it does, then such a factorization is unique. 
\end{theore}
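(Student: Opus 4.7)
\medskip

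\textbf{Proof sketch.} The plan is to split the argument into three parts: verify that $u$ inverts $S$; establish existence of the factorization; then establish uniqueness. For the first part, note that for any $s\in S$ the object $\co(s)$ lies in $\du$ by construction, so $l(s)$ is an isomorphism in $\eu=K^b(\au)/\du$ by the standard Verdier-localization calculus, and hence $u(s)$ is invertible in $\au[S^{-1}]_{add}$.

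For existence, given an additive $F:\au\to\au'$ that converts each $s\in S$ into an isomorphism, I extend $F$ termwise to a triangulated functor $K^b(F):K^b(\au)\to K^b(\au')$. For each $s\in S$, $K^b(F)(\co(s))=\co(F(s))\cong 0$ in $K^b(\au')$ because $F(s)$ is an isomorphism. The full subcategory of $K^b(\au)$ on those objects which $K^b(F)$ sends to a zero object is a triangulated subcategory of $K^b(\au)$, so it contains the triangulated subcategory $\du$ generated by the cones $\co(s)$. The universal property of the Verdier localization now factors $K^b(F)$ uniquely as $\bar F'\circ l$ for a triangulated $\bar F':\eu\to K^b(\au')$. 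Restricting $\bar F'$ to $\au[S^{-1}]_{add}\subset\eu$: on objects, $\bar F'(A)=F(A)\in \au'$; on morphisms, the image lies in $\au'$ because $\homm_{K^b(\au')}(F(A),F(B))$ reduces to $\homm_{\au'}(F(A),F(B))$ for complexes concentrated in degree zero. This produces the desired additive $\bar F:\au[S^{-1}]_{add}\to\au'$ satisfying $\bar F\circ u=F$.

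Uniqueness is the main obstacle. Suppose $G_1,G_2:\au[S^{-1}]_{add}\to\au'$ both satisfy $G_iu=F$. They automatically agree on every morphism of the form $u(g)$ with $g\in\mo(\au)$, and, since each $G_i$ preserves inverses, on every $u(s)^{-1}$ with $s\in S$; by functoriality they therefore agree on any composition of such morphisms inside $\au[S^{-1}]_{add}$. The crux is to see that these compositions exhaust $\mo(\au[S^{-1}]_{add})$. A general morphism $f:A\to B$ there, by the construction of $\eu=K^b(\au)/\du$, has the shape $l(g)\circ l(t)^{-1}$ for a Verdier roof $A\xleftarrow{t}X\xrightarrow{g}B$ with $X\in K^b(\au)$ and $\co(t)\in\du$. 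To rewrite $f$ as a composition of $u$-images and formal inverses of elements of $S$, I would perform a double induction: an outer induction on the cohomological length of $X$ (breaking $X$ via distinguished triangles into pieces that eventually lie in $\au$, and using the Verdier fraction calculus to translate these triangles into allowed compositions), together with an inner induction on a presentation of $\co(t)$ as an iterated cone of the generators $\co(s)$ (letting one replace $t$ by a zigzag of elements of $S$ up to isomorphism). This combinatorial bookkeeping, which categorifies the rewriting arguments of Gerasimov and Malcolmson in the theory of noncommutative ring localizations, is the main technical burden; once it is in place, $G_1=G_2$ follows at once by functoriality.
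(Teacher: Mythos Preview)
Your existence argument is essentially the paper's own (Proposition~\ref{paddlocu}(3)): extend $F$ to $K^b(F)$, observe it kills the generators $\co(s)$ hence all of $\du$, and factor through the Verdier localization. That part is fine.

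The uniqueness sketch, however, has a real gap. You correctly identify the target: every morphism in $\au[S^{-1}]_{add}$ must be shown to lie in the subcategory generated by $u(\mo\au)$ and the inverses of $u(S)$. But your proposed ``outer induction on the cohomological length of $X$'' does not do this. Breaking the middle object $X$ of a roof $A\xleftarrow{t}X\xrightarrow{g}B$ into shorter complexes via distinguished triangles does not decompose the roof $l(g)l(t)^{-1}$ into simpler roofs; the Verdier calculus gives you no such splitting. What actually controls the situation is not the length of $X$ but the structure of the \emph{cone} $\co(t)\in\du$.

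The paper's route is as follows. Since $B=\{\co(s):s\in S\}$ sits in $K^b(\au)^{[-1,0]}$, it is \emph{weakly negative}: $B\perp B[i]$ for $i\ge 2$. This orthogonality yields weak weight decompositions inside $\du$ (Proposition~\ref{ploc}(1)), which in turn force any roof between objects of $\au$ to be equivalent to one whose cone lies in the extension-closure $D$ of $B$ (Proposition~\ref{ploc}(2) with $n=0$). A further manipulation (Proposition~\ref{surjww}) rewrites the morphism as $g\circ s^{-1}\circ i$ with $g,s,i\in\mo\au$ and $\co(s)\in D$. Finally, one identifies $D$ with cones of lower-triangular matrices having entries of $S$ on the diagonal (Proposition~\ref{trmatrix}) and decomposes such a matrix as a product of elements of $S\cup\isom(\au)$ by an elementary row-reduction induction (Proposition~\ref{exts}). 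This last step is where your ``inner induction on a presentation of $\co(t)$'' actually lives, and it is genuine; but it only becomes accessible after the weak-negativity argument has reduced the cone to $D$. Your sketch is missing precisely that reduction.
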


   Our methods (of proving this theorem) are closely related to weight structures (as was shown in
   several previous papers of the first author, weight structures are important
   cousins of $t$-structures; cf. also Remark \ref{rstws}(4) below). We prove that a
   weight structure $w$ on a triangulated category $\cu$ necessarily induces a
   weight structure on $\cu/\du$ if a triangulated subcategory $\du$ of $\cu$ is
   generated by a set of objects of ``$w$-length $1$'', i.e., 
   by cones of some set of morphisms between objects of the heart $\hw$
   of a weight structure $w$ on $\cu$ 
   (in contrast to the setting of $t$-structures, $w$ does
   not have to induce a weight structure on $\du$; we will say more on this
   matter at the end of this Introduction). The heart of this weight structure
   is the Karoubi-closure of $\hw[S\ob]_{add}$ in $\cu/\du$. So, one can
   describe $\au[S\ob]_{add}$ using any triangulated category $\cu$ with a weight structure whose heart is (contained in) the
   idempotent completion of $\au$. 
We also prove 
for a 
triangulated subcategory $\ddu\subset\cu$ 
that is (``compactly'') generated by
$\{\co (S)\}$ {\it as a localizing subcategory} 
that $\cu/\ddu$ possesses
a $t$-structure {\it adjacent} to $w_{\cu/\ddu}$ and calculate its heart.

We also apply our results to  certain triangulated categories of (geometric)
birational motives. Those are obtained from (a version of) Voevodsky's effective
geometric motives over a scheme $U$ by inverting birational equivalences
(and by Karoubization). 
So, there exists a weight structure $w_{bir}$ on the category $\birs$ obtained. It is characterized as follows:
$\hw_{bir}$ is given by retracts of the birational motives of (smooth)
$U$-schemes; our results also yield a certain description of morphisms 
in the heart. The existence of $w_{bir}$ previously was only known for  $U$
being (the spectrum of) a perfect field (actually, this is the only case in which birational motives were considered previously);
 even in this case we obtain a new
``elementary'' proof of this fact (as well as a new method for the calculation of its heart).
As shown in previous papers of the first author (see Theorem 2.4.2 of \cite{bws}), the existence of a weight structure yields
functorial weight filtrations and weight spectral sequences for any cohomology
theory that factors through birational motives, and a conservative exact
weight complex functor whose target is $K^b(\hw_{bir})$. We also calculate the
Grothendieck group of $\birs$.

Now we list the contents of the paper in more detail.

We introduce some basic notation and definitions in \S\ref{snotata}.

We start \S\ref{srws} with certain calculations in a triangulated category
$\cu$ that contains a {\it weakly negative} class of objects $B$. We prove that
any object in the triangulated subcategory $\du$ of $\cu$ generated by $B$ (i.e., in the
smallest strict triangulated subcategory containing $B$) 
is a cone of a morphism of ``simpler objects'' (i.e., it has a {\it weak
weight decomposition}). It follows that a presentation of a morphism in
$\cu/\du$ as a ``roof'' can be ``simplified'' under certain conditions. This 
technical result easily yields the first of the statements in Theorem \ref{taddlocu}. 
We also recall that $\au[S\ob]_{add}$ coincides with the ``abstract''
localization of $\au$ by $S$ (as constructed by Gabriel and Zisman) if $S$
contains all identities and is closed under direct sums.

In \S\ref{slocadd} we ``compute'' the category $\au[S\ob]_{add}$ very
explicitly. In particular, we prove that in $\au[S^{-1}]_{add}$ any morphism
can be presented as $g\circ s\ob\circ i$, where $i,g\in \mo(\au)$, $s$ is
invertible in $\au[S\ob]_{add}$ (see Proposition \ref{surjww}). This result enables us to finish the proof of Theorem \ref{taddlocu}. Other
significant computations of this section are Propositions \ref{trmatrix} and
(especially) \ref{psum}.
We also compare our results with (some of) the results of the theory of
non-commutative localizations of rings (especially with the ones of \cite{malc}).
Unfortunately, the formulas of this section are rather unpleasant; yet the advantage of our methods over those of the predecessors 
is that we explain the origin of these equalities conceptually.

In \S\ref{sws}  we recall some basics on weight structures. 
For a subcategory $\du$ generated by cones of a set $S$ of $\hw$-morphisms
(where $w$ is a weight structure for $\cu$) we prove the existence of a weight structure on
$\cu/\du$ such that the localization functor is weight-exact. The heart of this
weight structure is the Karoubi-closure of $\hw[S\ob]_{add}$ in $\cu/\du$. It
coincides with the so-called small envelope of $\hw[S\ob]_{add}$ in the case when $w$ is a
bounded weight structure.
We also consider compactly generated triangulated categories; in the case when $\hw$ is the Karoubization of the category of coproducts
of some additive $\au\subset \cu$ (consisting of compact objects), $S\subset \mo(\au)$, and a 
 subcategory $\ddu\subset \cu$ 
generated by $\co(S)$ {\it as a localizing subcategory} we prove: $\cu/\ddu$
possesses a $t$-structure {\it adjacent} to $w_{\cu/\ddu}$ whose heart is the category of
additive functors from $\au[S\ob]^{op}$ to $ \ab$. This is a 
generalization of (some of) 
the results 
of 
 \cite{dwy} and \cite{neeran}.

In \S\ref{sbir} we introduce certain categories of birational motives (via the
method of \cite{kabir}; we are the first to consider birational motives over general base schemes).
It is easily seen that the results of the previous sections can be applied to
them. So, we discuss their ``weights'' and calculate $K_0$-groups of these
categories.

Now, for the convenience of readers already acquainted with weight structures
(in particular with \cite{bws}) we describe the relation of our current results
with those of (\S8.1 of) ibid; we also recall the 
behaviour of $t$-structures in localizations. 

\begin{rrema}\label{rintro}

Several properties of weight structures are quite similar to those of
$t$-structures; in particular, it was proved in \S8.1 of \cite{bws}: if a weight
structure $w$ on $\cu$ restricts to a weight structure on $\du\subset \cu$,
then it also yields a weight structure on $\cu/\du$ (i.e., the localization
functor $l:\cu\to \cu/\du$ is weight-exact). Note here: the ``simplest'' way
to construct a subcategory $\du\subset \cu$ such that $w$ restricts to it is to
generate it 
by an additive subcategory of $\hw$ (moreover, one obtains all
possible $\du$ this way if $w$ is bounded). Now, for the setting of
$t$-structures we certainly have: if 
 $l$ is $t$-exact, then $t_{\cu}$ restricts to a $t$-structure on 
$\du$. Indeed, since $t$-decompositions are canonical, we obtain: both
components of the $t_{\cu}$-decomposition of an object killed by $l$ also
belong to the categorical kernel of $l$.
Now, a crucial distinction of weight decompositions (see Definition
 \ref{dwstr}
 below) from $t$-ones is that they are not canonical; so, this
argument cannot be carried over to weight structures. Moreover, if $\du$ is
generated by cones of any (!) set of $\hw$-morphisms then 
$w$ yields a weight structure on 
$\cu/\du$, though $w$ usually does not restrict to a weight structure on $\du$
(since there are ``not enough'' objects in $\cu_{w=0}\cap \du$, we only have
``weak weight decompositions'' inside $\du$; cf. 
Remark \ref{rcompu}(1) below)! A certain ``explanation'' of this distinction
between weight and $t$-structures is given by the notion of adjacent
structures; see \S\ref{sadja} for more detail.
\end{rrema}

The first author is deeply grateful to prof. L. Barbieri-Viale, the Landau
Network-Centro Volta, and the Cariplo Foundation for the wonderful working
conditions during his staying in the Milano University, where he started writing
this paper. Both authors are very grateful  to prof. D.-C. Cisinski, prof. A.I.
Generalov, prof. B. Kahn, prof. A. Neeman, prof. L.E. Positselski, and the referee for their
interesting comments. 

\section{Some notation and conventions}\label{snotata}

Given a category $C$ and objects $X,Y\in\obj C$, we denote by
$C(X,Y)$ the set of morphisms from $X$ to $Y$ in $C$. We denote by
$\mo(C)$ the class of all morphisms of $C$; $\isom(C)\subset \mo (C)$
is the subclass of all isomorphisms; $\id(\obj C)\subset \isom(C)$ is the
subclass of identity morphisms (of all objects of $C$).

For categories $C',C$ we write $C'\subset C$ if $C'$ is a full 
subcategory of $C$.

Given a category $C$ and objects $X,Y\in\obj C$, we say that $X$ is a {\it
retract} of $Y$ 
 if $\id_X$ can be factored
as $X\stackrel{i}{\to} Y\stackrel{p}{\to}X$ (if $C$ is triangulated or abelian,
then $X$ is a retract of $Y$ if and only if $X$ is its direct summand). We
will call $p$ a {\it retraction}; $i$ will be called a {\it coretraction}.

An additive subcategory $\hu$ of additive category $C$ 
is called {\it Karoubi-closed}
  in $C$ if it
contains all retracts of its objects in $C$. 

The full subcategory $\kar_{C}(\hu)$ of additive category $C$ whose objects
are all retracts of objects of a subcategory $\hu$ (in $C$) will be
called the {\it Karoubi-closure} of $\hu$ in $C$. 

The {\it Karoubization} $\kar(\au)$ (no lower index) of an additive
category $\au$ is the category of ``formal images'' of idempotents in $\au$.
So, its objects are pairs $(A,p)$ for $A\in \obj \au,\ p\in \au(A,A),\ p^2=p$, and the morphisms are given by the formula 
\begin{equation}\label{mthen}
\kar(\au)((X,p),(X',p'))=\{f\in \au(X,X'):\ p'\circ f=f \circ p=f \}.\end{equation}
 The correspondence  $A\mapsto (A,\id_A)$ (for $A\in \obj \au$) fully embeds $\au$ into $\kar(\au)$.
 Besides, $\kar(\au)$ is {\it Karoubian}, i.e., 
 any idempotent morphism yields a direct sum decomposition in 
 $\au$. Equivalently, $\au$ is Karoubian if (and only if) the canonical embedding $\au \to \kar
(\au)$ is an equivalence of categories.
 Recall also that $\kar(\au)$ is
triangulated if $\au$ is (see \cite{ba}).

$\cu$ below will always denote some triangulated category;
usually it will
be endowed with a weight structure $w$. 

A class $D\subset \obj \cu$ will be
called {\it extension-closed}
    if it contains $0$ 
    and for any distinguished triangle $X\to Y\to Z$
in $\cu$ we have: $X,Z\in D\implies
Y\in D$. We will call the smallest extension-closed subclass 
of objects of $\cu$ that  contains a given class $B\subset \obj\cu$ 
  the 
{\it extension-closure} of $B$.

For a $D\subset \obj \cu$ we will denote by $\lan D\ra$ the smallest full Karoubi-closed
triangulated subcategory of $\cu$ containing $D$, whereas the smallest full strict
triangulated subcategory $\du$ of $\cu$ containing $D$ will be called {\it the triangulated subcategory generated by $D$} (i.e., $\lan D\ra$ is the Karoubi-closure of $\du$ in $\cu$).

For $X,Y\in \obj \cu$ we will write $X\perp Y$ if $\cu(X,Y)=\ns$. For
$D,E\subset \obj \cu$ we will write $D\perp E$ if $X\perp Y$ for all $X\in D,\
Y\in E$.
For $D\subset\obj \cu$ we will denote by $D^\perp$ the class
$$\{Y\in \obj \cu:\ X\perp Y\ \forall X\in D\}.$$
Dually, ${}^\perp{}D$ is the class
$\{Y\in \obj \cu:\ Y\perp X\ \forall X\in D\}$.

For an additive category $\au$ we will denote by $K(\au)$ the homotopy category
of (cohomological) complexes over $\au$. Its full subcategory of
bounded complexes will be denoted by $K^b(\au)$. 

For $i, j\in
\z$ with $i\le j$ we will denote by $K^b(\au)^{\ge i}$ (resp. $K^b(\au)^{\le
i}$) the class of those (bounded) complexes that are isomorphic (i.e., homotopy
equivalent) to complexes that have non-zero terms only in degrees $\ge i$
(resp. $\le i$); $K^b(\au)^{[i,j]}$ is the class of complexes isomorphic to
those that  have non-zero terms only in degrees from $i$ to $j$. We will also
use similar notation for the whole $K(\au)$. In our arguments below we usually
can (and will) assume that the corresponding complexes are concentrated in
degrees $\ge i$ (resp. $\le i$, resp. in degrees between $i$ and $j$) themselves.

If $g$ is a morphism in $K(\au)$ then for any $i$ we will use the notation $g^i$
for the corresponding morphism (in $\au$) between degree $i$ components of
complexes and call it the degree $i$ component of $g$. 
If we will say that an arrow (or a sequence of arrows) in $\au$
yields an object of $K^b(\au)$ we will mean by default  that the last object of
this sequence is in degree $0$; yet note  that we will ignore this convention
when the terms of a complex are  indexed by numbers  (in $T^0\to T^1$ the term
$T^1$ is in degree $1$).
    We will always extend a ``finite'' $\au$-complex by $0$'s to $\pm \infty$
    (in order to obtain an object of $K^b(\au)$).
    
For a single morphism $f\in \au(C^{-1}, C^0)$ we will also call the complex 
$C^{-1}\stackrel{f}{\to}C^0$ the cone of $f$.
We will use a similar convention for triangulated categories (i.e., for any
$f\in \cu(X,Y)$ we have a distinguished triangle $X\stackrel{f}{\to}Y\to
\co(f)\to X[1]$).

Given an object $X \oplus Y$ of $\au$ we will use the notation
$in_X$ for the morphism $\begin{pmatrix}
id_X \\
0
\end{pmatrix}:X \to X \oplus Y$ and $pr_X$ for the morphism $\begin{pmatrix}
id_X &
0
\end{pmatrix}:X \oplus Y \to X$ if there is no ambiguity.


\section{Weakly negative classes in triangulated categories, and localizations}\label{srws}

First, we recall certain basic facts on localizations of categories. Everywhere
in the paper except \S\ref{sadja}  we will 
only consider essentially small categories (in order to avoid set-theoretic
difficulties).

By Lemma I.1.2 of \cite{gabr}, for any  category $C$ and any  
set $S\subset \mo C$ there exists an initial object in the category of those
functors from $C$ that converts all elements of $S$ into invertible
morphisms. We will denote the target of this functor by $C[S\ob]$ and call it
the ``abstract'' or the ``Gabriel-Zisman'' localization of $C$ by $S$.  
We have $\obj C[S\ob]=\obj C$; any morphism in $C[S\ob]$ can factorized
into the composition of a chain of morphisms each of which either comes from
$C$ or is inverse to some element of $S$ (in $C[S\ob]$). It follows: any 
morphism in $C[S\ob]$ can be presented as the composition $f_1s_1\ob
f_2s_2\ob\dots f_ns_n\ob$ (a {\it zig-zag}) for some $n\ge 0$, $f_i\in \mo
C$, $s_i\in S$. We will call a single composition $f_is_i\ob$ of this sort a
{\it roof}.

A caution: in 
Theorem \ref{taddlocu} (see also Remark \ref{rnot} below) we 
have already introduced a certain ``additive version'' of
this definition 
for an additive category $\au$; a
priori we only have a comparison functor $Q:\au[S\ob]\to \au[S\ob]_{add}$ that
does not have to be an isomorphism (though it ``usually'' is; see Corollary
\ref{taddloc}). Note that we still have $\obj  \au[S\ob]_{add}=\obj
\au[S\ob]=\obj \au$. 

By the localization of a triangulated category $\cu$ by a triangulated subcategory $\du$ we will mean 
the Verdier quotient $\cu/\du$ (see Remark 2.1.9 of \cite{neebook}). Recall that it naturally equivalent to $\cu/\du'$ for $\du'$ being  the Karoubi-closure  of $\du$ in
$\cu$ (see Remark 2.1.39 of ibid.). 
 Note that $\cu/\du$ is  a
triangulated category (see Theorem 2.1.8 of ibid.); any morphism in it can be presented as a single ``roof''
$fs\ob,$ for some $f, s \in \mo(\cu)$, $\co(s) \in \obj\du$ (see Remark 2.1.13 of ibid.).

\subsection{Weakly negative classes of objects 
and Verdier localizations}

\begin{defi}\label{dwneg}
1. We will say that a set $D$ 
 of objects of $\cu$ is {\it
weakly negative} if $\ns\in D$ and $D\perp (\cup_{i\ge 2} D[i])$. 
 
 2. For any $i,j\in\z$ 
 and a weakly negative $D$ we
  will denote by $D_{[i,j]}$ the extension-closure of $\cup_{i\le l \le j}D[l]$
  if $j\ge i$, and set $D_{[i,j]}=\ns$ if $j<i$. 
 We denote $\cup_{k\le j} D_{[k,j]}$ by $D_{\le j}$; $D_{\ge i}=\cup_{k\ge i}
 D_{[i,k]}$.

 \end{defi}

\begin{rema}\label{rwneg}

1. The weak negativity condition is a weakening of the negativity  one 
that is fulfilled for the hearts of weight structures; see 
Remark  \ref{rstws} below. So, in the case $D=\cu_{w=0}$ for $\cu$ endowed with a weight structure $w$ (see Definition
\ref{dwstr}(IV)) $D_{[i,j]}$ equals 
$\cu_{[i,j]}$.

2. Certainly, the class of objects of the triangulated subcategory of $\cu$ generated by
$D$ (in the sense of \S\ref{snotata}) equals the union of $D_{[-N,N]}$ for all $N>0$.

3. Our basic example of a weakly negative class is $D\subset
K^b(\au)^{[-1,0]}$; its generalization is $D\subset \cu_{[0,1]}$ for $\cu$
endowed with a weight structure $w$. For both of these examples our Proposition
\ref{ploc}(2--4) has consequences 
that are (more or less) easy to formulate. 

\end{rema}

Now we prove a  technical statement that is crucial for this paper.

\begin{pr}\label{ploc}
 Let $\cu$ be a triangulated category and $B\subset \obj \cu$  a weakly negative
 set. Denote by $\du$  the triangulated subcategory of $\cu$ generated by $B$
 (see \S\ref{snotata}; so, $\du\subset \lan B\ra\subset \cu)$). Then the
 following statements are valid.

1. For any 
$m,n\in \z$, 
$M\in B_{[m,n]}$, there exists a distinguished triangle
\begin{equation}\label{ewwd} 
X\to M\to Y\stackrel{f}{\to} X[1]
\end{equation} 
for some  $X\in B_{[m,0]},\  Y\in B_{[1,n]}$ (it may be called a weak weight decomposition of $M$; cf. 
the formula (\ref{wd}) below).

2. For an $n\in \z$ let $X\in {}^{\perp}(B_{\ge 1}),\ Y\in (B_{\le
n-2})^{\perp}$.
Then any $e\in (\cu/\du)(X,Y)$ can be presented as $h \circ q\ob $ for some $Z\in
\obj \cu$, $h\in \cu(Z,Y)$, $q\in \cu(Z,X)$ with $\co (q)\in B_{[n,0]}$.

3. For $n,X,Y,Z, e,h,q$ as above we have: $e=0$ if and only if 
$h$ can be factored in $\cu$ through an element of $B_{[n-1,0]}$.

4. Assume that $X\in {}^{\perp}(B_{\ge 1}),\ Y\in (B_{\le -1})^{\perp}$. Then $\cu(X,
Y)$ surjects onto $(\cu/\du)(X,Y)$.

\end{pr}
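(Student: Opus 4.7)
I prove the four assertions in order, each using the previous ones.

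For Part~1 I induct on the length of an extension-tower presentation of $M\in B_{[m,n]}$. The base case $M=B[l]$ with $m\le l\le n$ is immediate: place $M$ entirely on the right if $l\ge 1$ and on the left if $l\le 0$. For the inductive step, given a triangle $M'\to M\to M''\to M'[1]$ together with weak weight decompositions of $M'$ and $M''$ furnished by the inductive hypothesis, I glue them using the octahedral axiom; weak negativity ($B\perp B[i]$ for $i\ge 2$) supplies exactly the $\cu$-Hom vanishing required to kill the glueing obstruction. The same argument, applied after shifting $B$ by $-k$, yields weak weight decompositions at every cut point $k$, not only at $0$; I use this flexibility freely below.

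For Part~2, start from any single-roof presentation $e=h'\circ (q')\ob$ with $\co(q')\in B_{[-N,N]}$ for some $N$ (available by the Verdier-quotient description of $\cu/\du$). Reduce the cone to $B_{[n,0]}$ in two symmetric steps. First (upper cut), decompose $\co(q')$ at level $0$ as $K_L\to \co(q')\to K_R$ with $K_R\in B_{[1,N]}$. Since $X\in{}^\perp B_{\ge 1}$, the composition $X\to\co(q')\to K_R$ vanishes, so $X\to \co(q')$ lifts through $K_L$; the octahedron on $X\to K_L\to\co(q')$ produces a new roof $(q^{(1)}\colon Z^{(1)}\to X,\ h^{(1)}=h'\circ\phi)$, where $\phi\colon Z^{(1)}\to Z$ satisfies $q^{(1)}=q'\circ\phi$, $\co(\phi)\in\obj\du$, and $\co(q^{(1)})=K_L\in B_{[-N,0]}$. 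Second (lower cut), decompose $K_L$ at level $n-1$ as $K'_L\to K_L\to K'_R$ with $K'_R\in B_{[n,0]}$; complete $X\to K_L\to K'_R$ to a triangle defining $q^{(2)}\colon Z^{(2)}\to X$ with $\co(q^{(2)})=K'_R$. The octahedron yields $\psi\colon Z^{(1)}\to Z^{(2)}$ with $\co(\psi)=K'_L\in B_{[-N,n-1]}$ and $q^{(1)}=q^{(2)}\circ\psi$. The crucial point is that $K'_L[-1]\in B_{\le n-2}$, so $Y\in(B_{\le n-2})^\perp$ makes the long exact sequence for $\cu(-,Y)$ give surjectivity of $\cu(Z^{(2)},Y)\to\cu(Z^{(1)},Y)$; lift $h^{(1)}$ to $h^{(2)}\colon Z^{(2)}\to Y$ with $h^{(2)}\circ\psi=h^{(1)}$. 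A direct computation in $\cu/\du$ then verifies $e=h^{(2)}\circ(q^{(2)})\ob$.

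For Part~3, $(\Leftarrow)$ is immediate since $B_{[n-1,0]}\subset\obj\du$. Conversely, $e=0$ forces $h=0$ in $\cu/\du$, which yields $g\colon Z'\to Z$ with $\co(g)\in\obj\du$ and $h\circ g=0$; hence $h=\tilde h\circ\pi$ factors through $K:=\co(g)\in B_{[-N,N]}$. Split $K$ at level $n-2$: $K_L\to K\to K_R$ with $K_L\in B_{\le n-2}$, $K_R\in B_{[n-1,N]}$. Since $\cu(K_L,Y)=0$ by hypothesis, the long exact sequence shows $\tilde h$ lifts to $\bar h\colon K_R\to Y$, so $h$ factors through $K_R$. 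To shrink the upper end from $N$ down to $0$, I first establish $Z\in{}^\perp B_{\ge 1}$ using the triangle $Z\to X\to\co(q)$: the hypothesis kills $\cu(X,T)$ for $T\in B_{\ge 1}$, and weak negativity forces $\cu(\co(q)[-1],T)=0$ because $\co(q)[-1]\in B_{[n-1,-1]}$ lies at least two degrees below $B_{\ge 1}$. Then split $K_R$ at level $0$: the upper piece $(K_R)_R\in B_{\ge 1}$ is killed by the composition from $Z$, so $Z\to K_R$ lifts through $(K_R)_L\in B_{[n-1,0]}$, and post-composing with the restriction of $\bar h$ to $(K_R)_L$ exhibits the desired factorization of $h$.

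Part~4 is Part~2 specialized to $n=1$: then $B_{[1,0]}=\ns$, so $\co(q)=0$, $q$ is an isomorphism in $\cu$, and $e$ is the image of the genuine morphism $h\circ q\ob\in\cu(X,Y)$. The main technical obstacle throughout is tracking the octahedron compatibilities carefully so that every intermediate roof truly represents $e$ in $\cu/\du$; a close second is the auxiliary orthogonality $Z\in{}^\perp B_{\ge 1}$ of Part~3, which relies on weak negativity being just strong enough to bridge the two-degree gap between $B_{[n-1,-1]}$ and $B_{\ge 1}$.
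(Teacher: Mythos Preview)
Your proposal is correct and follows essentially the same route as the paper. In Part~1 you describe the same inductive glueing; the paper phrases the glueing step as an application of the $3\times 3$ lemma (Proposition~1.1.11 of \cite{bbd}) rather than a bare invocation of the octahedral axiom, but the content is identical and the same orthogonality $X''[-1]\perp Y'$ (from weak negativity) is the obstruction being killed. In Parts~2 and~3 you perform the same two-step truncation of the cone; the only cosmetic differences are that the paper tracks $T_1=\co(q')[-1]$ rather than $\co(q')$ itself, and in Part~3 you carry out the lower cut before the upper one while the paper does the reverse. Part~4 is the identical specialization to $n=1$.
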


\begin{proof}

1. We apply an argument used in the proof of Proposition 3.5.3(8) of \cite{brelmot} and of Theorem 4.3.2(II.1) of \cite{bws}; a similar reasoning can also be found in 
Appendix B of \cite{posat}.

Obviously, we can assume below that $m\le 0,n>0$, since in all the remaining cases the statement is obvious.

We define a certain notion of complexity for elements of  $B_{[m,n]}$. For
an $M\in B[i]$ (for some $i$ between $m$ and $n$) we will say that $M$ has
complexity $\le 0$. If there exists a distinguished triangle $E\to F\to G$, and
$E,G\in B_{[m,n]}$ are of complexity $\le j$ for some $j\ge 0$ (they also could
have smaller complexity) we will say that the complexity of $F$ is $\le j+1$.
 By definition, any element of  $B_{[m,n]}$
 has finite complexity; hence it suffices to verify: for a distinguished
 triangle $O[-1]\to N\to M\to O$ if $N,O\in B_{[m,n]}$ and  possess weak weight
 decompositions (i.e., there exist distinguished triangles $X_N\to N\to Y_N$
 and $X_O\to O\to Y_O$ with $X_N,X_O\in B_{[m,0]},\  Y_N,Y_O\in B_{[1,n]}$), 
 then $M$ possesses a weak weight decomposition also.

Next, we note that the weak negativity of $B$ implies that $B_{[m,0]}[-1]\perp
B_{[1,n]}$; in particular, $X_O[-1]\perp Y_N$.
Hence the morphism $O[-1]\to N$ can be completed to to a commutative square
 $$\begin{CD} X_O[-1] @>{}>> O[-1] \\ @VV{}V @VV{}V \\
X_N @>{}>>N \\
\end{CD} $$
(see the easy Lemma 1.4.1(1) of \cite{bws}). Therefore by 
 Proposition 1.1.11 of \cite{bbd} we can complete the distinguished triangle $N\to M\to O$ to a commutative  diagram
 \begin{equation}\label{dia3na3t}
\begin{CD}
X_N @>{}>>N @>{}>> Y_N \\
@VV{}V@VV{}V @VV{ }V\\ X@>{}>>M
@>{}>>Y\\ @VV{}V@VV{}V @VV{}V\\
X_O@>{}>>O @>{}>> Y_O\\
\end{CD}
\end{equation}
 whose rows and columns are distinguished triangles (for some $X,Y\in \obj \cu$). 
The light and left columns of this diagram yield that  $X\in B_{[m,0]},\  Y\in B_{[1,n]}$. Hence the middle row yields a weak weight decomposition of $M$.

2. 
By the theory of Verdier localizations (see the beginning of this section) $e$ can be presented
as $ h_1 \circ q_1\ob$ for some $q_1:Z_1\to X$ such that $T_1=\co (q_1[-1])\in
\obj \du$.

Applying 
assertion 1 of our proposition to  $T_1[1]$ we  obtain: for some large enough
$N$ there exists  a distinguished triangle $T_2\to T_1\to V\to T_2[1]$
 such that $T_2\in  B_{[-N-1,-1]},\ V\in B_{[0,N-1]}$.

 Since $X\perp V[1]$, we can factorize the morphism $c$ in the distinguished
 triangle $ T_1\to  Z_1 \stackrel{q_1}{\to}  X\stackrel{c}{\to} T_1[1]$ 
  through $T_2[1]$. Hence applying the octahedral axiom of triangulated
  categories 
  we obtain: there exist $Z_2\in \obj \cu$ and a morphism $d:Z_2\to Z_1$
  such that $\co (d)\cong V\in \obj \du$ 
  and $\co(q_1\circ d) \cong T_2[1] \in
  B_{[-N,0]}$.
  We set $q_2=q_1\circ d$, 
  $h_2=h_1\circ d$. We have: $h_2\circ q_2\ob = h_1\circ d \circ d\ob
  \circ q_1\ob = h_1 \circ q_1\ob$ in $\cu/\du$.

Next, applying 
assertion 1 of our proposition to $T_2[2-n]$ we  obtain: there exists  a
distinguished triangle $U\to T_2 
\to T\to U[1]$ 
 such that $U\in B_{[-N-1,n-2]}$ and $T\in B_{[n-1,-1]}$. 
  Hence the octahedral axiom yields: there exist a $Z\in \obj \cu$, an 
  $m\in \cu(Z_2,Z)$ such that $\co (m)=U[1]\in \obj \du$, and a morphism $q\in
  \cu(Z,X)$  with $\co (q)\cong T[1]$.
 Now, since $U\perp Y$, $h_2$ can be factorized as $h\circ m$ for some $h\in
 \cu(Z,Y)$. Thus $ h_2 \circ q_2\ob= h \circ q\ob $ in $\cu/\du$, and we obtain
 the result.
 
 3. Certainly, $e=0$ if and only if $h=0$ in $\cu/\du$. Besides, since any
 $T\in B_{[n-1,0]}$ becomes $0$ in $\cu/\du$, any morphism that factors through
 an object of $B_{[n-1,0]}$ vanishes in $\cu/\du$. It remains to prove the
 converse implication.
 
 We note: our assumptions on $X$ and $\co(q)$ yield that
  $Z\in {}^\perp (B_{\ge 1})$. Indeed, for any $b\in B$, $i\ge 1$, we have a long
  exact sequence $\dots\to \cu(X,b[i])\to \cu(Z,b[i])\to \cu(\co
  (q[-1]),b[i])\to \dots$; so, it remains to apply the weak negativity of $B$.

 Next, a well-known (and easily proven) property of Verdier localizations
 yields: $h$ vanishes in $\cu/\du$ if and only if it factors through an
 object of $\lan B \ra$. Then it certainly also factors through some  $T_1\in
 B_{[-N,N]}$ for some large enough $N$.
 
 Next, we use certain arguments that 
 are quite  similar to the proof of the previous assertion.
 
 Applying 
assertion 1 of our proposition to $T_1$ we  obtain: there exists  a
distinguished triangle $T_2\to T_1 
\to U\to T_2[1]$ 
 such that $T_2\in B_{[-N,0]},\ U\in B_{[1,N]}$. Since  $Z\perp U$, we can
 factorize $h$ into the composition $Z{\to}T_2\stackrel{h'}{\to} Y$.

 Now we apply assertion 1 to $T_2[2-n]$. We obtain a distinguished triangle
 $V\to T_2 
\to T\to V[1]$ for some $T\in B_{[n-1,0]}$, $V\in B_{\le n-2}$. Since $V\perp
Y$, we can factor $h'$ through $T$; hence the same is true for $h$.

 4. It suffices to note: by 
 assertion 2 (applied in the case $n=1$) any $e\in (\cu/\du)(X,Y)$ can be
 presented as $h\circ q\ob$, where the cone of $q$ is zero, i.e., $q$ is an
 isomorphism in $\cu$.

\end{proof}

We will use this proposition several times below. Yet before that we would like
to make some extra ``computational'' remarks. 

\begin{rema}\label{rcompu} 

\begin{enumerate}
\item\label{i1}
 Suppose $w$ is a weight structure on a triangulated category $\cu$ (see
Definition \ref{dwstr} below). Then any set $B\subset \cu_{[0,1]}$ is easily
seen to be weakly negative; the conditions of Proposition \ref{ploc}(2,3) are fulfilled for any
$X\in \cu_{w\le 0}$, $Y\in \cu_{w\ge n}$ (this is immediate from the orthogonality axiom of weight structures, i.e., from Definition \ref{dwstr}(I.iii)).

Besides, in this case we have $B_{[m,n]}\subset \cu_{[m,n+1]}$ for any $m,n\in
\z$. In particular, for the weak weight decomposition (\ref{ewwd}) we have
$X\in \cu_{[m,1]}$, $Y\in \cu_{[0,n]}$ (see Definition \ref{dwstr}(IV) for the definition of the latter classes), whereas for the ``usual'' weight
decomposition (see (\ref{wd})) we would certainly have $X\in \cu_{[m,0]}$.

\item It is easily seen that Proposition \ref{ploc}(4)  cannot be applied (or adjusted) to the
situation when $B$ is an arbitrary subset of $\cu_{[0,2]}$ (or to $B\subset
K^b(\au)^{[-2,0]}$; cf. Remark \ref{rnot} below).  Indeed, for an abelian
$\au$, $\cu=K^b(\au)$, $B$ being the class of complexes coming from all short
exact sequences in $\au$, we have $\cu/\du=D^b(\au)$, whereas 
$D^b(\au)(M,N[r])=\extr^r_{\au}(M,N)$ for any $M,N\in \obj \au$, $r\in \z$;
hence this group can be non-zero for arbitrarily large $r>0$ (for general
$\au,M,N$).
 
One can also obtain an example of this sort using (effective) Voevodsky's
motives (the corresponding calculations are easy in the well understood case
of motives over a perfect field); cf. \S\ref{sbmot} below.

\end{enumerate}

\end{rema}

\subsection{On the universal additive functor inverting a set of morphisms}


\begin{rema}\label{rnot}
1. Till the end of 
\S\ref{scompgen} we will use the following notation:
$\au$ is an additive category,
$\cu=K^b(\au)$, $S$ is some set of $\au$-morphisms, $B\subset K^b(\au)^{[-1,0]}$
is the set of cones of elements of $S$, $D=B_{[0,0]}$ (in the notation of
Definition \ref{dwneg}) is the extension-closure of $B$, $\du\subset \cu$ is
the triangulated category generated by $B$.

2.  We will use the following notation from Theorem \ref{taddlocu} throughout this paper: we  denote the full subcategory of $\cu/\du$ whose objects are those of $\au$  by
$\au[S\ob]_{add}$ 
(note that the theorem justifies this notation). We denote by $u:\au\to \au[S\ob]_{add}$ 
the restriction of the localization functor  $l:\cu\to \cu/\du$ to
$\au$. 

\end{rema}

Now we apply Proposition \ref{ploc} to $\cu$. A more general 
setting will be considered in \S\ref{sws} below.

\begin{pr}\label{paddlocu}


1. For $X,Y\in \obj \au\subset \obj \cu$, any morphism  $e\in (\cu/\du)(X,Y)$ can
be presented as $h \circ q\ob $ for some $Z\in \obj \cu$, $h\in \cu(Z,Y)$, and $q\in
\cu(Z,X)$ such that $\co (q)\in D$.

The morphism $e$ is zero if and only if $h$ can be factored through some $T$
such that there exists a distinguished triangle $C_{-1}\to T\to C_0$ with
$C_i\in D[i]$ (for $i=-1,0$).

2. More generally, for $X\in K^b(\au)^{\ge 0}$, $Y\in K^b(\au)^{\le -n}$ we
have: any   $e\in (\cu/\du)(X,Y)$ can be presented as $h \circ q\ob $ for some $Z\in
\obj \cu$, $h\in \cu(Z,Y)$ such that $q\in \cu(Z,X)$, $\co (q)\in B_{[n,0]}$
(in the notation of Proposition \ref{ploc}).


3. An 
additive functor $F:\au\to \au'$ factors through $u$  (see the Remark above) if and only if $F$ converts all
elements of $S$ into isomorphisms.
\end{pr}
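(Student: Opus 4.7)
The plan is to derive all three parts from Proposition \ref{ploc}, specialized to $\cu=K^b(\au)$ with $B\subset K^b(\au)^{[-1,0]}$ the set of cones of elements of $S$ (so that $D=B_{[0,0]}$ and $\du$ is the triangulated subcategory generated by $B$). The preliminary step is to verify weak negativity of $B$ and the perp-conditions required by Proposition \ref{ploc}(2,3). For $b,b'\in B$ and $i\ge 2$ the complexes $b$ and $b'[i]$ are concentrated in the disjoint degree ranges $[-1,0]$ and $[-1-i,-i]$, so every chain map between them is zero; the same disjoint-support argument gives $X\perp b[i]$ for all $X\in K^b(\au)^{\ge 0}$ and $i\ge 1$, as well as $b[i]\perp Y$ for all $Y\in K^b(\au)^{\le -n}$ and $i\le n-2$. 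Extension-closedness of the two perp-relations then upgrades these to $X\in{}^{\perp}(B_{\ge 1})$ and $Y\in (B_{\le n-2})^{\perp}$.

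Part 2 is then an immediate application of Proposition \ref{ploc}(2). For the representation statement of part 1, any $X,Y\in\obj\au$ satisfy the hypotheses with $n=0$, and Proposition \ref{ploc}(2) with $n=0$ produces $e=h\circ q^{-1}$ with $\co(q)\in B_{[0,0]}=D$; Proposition \ref{ploc}(3) with $n=0$ then gives $e=0$ iff $h$ factors in $\cu$ through some $T\in B_{[-1,0]}$. The remaining task is to translate membership in $B_{[-1,0]}$ into the stated triangle. The "if" half is immediate from extension-closedness of $B_{[-1,0]}$. For "only if" I would rerun the complexity induction of Proposition \ref{ploc}(1) with the split taken at the pair $(-1,-1)/(0,0)$ rather than $(m,0)/(1,n)$; the only input used in that proof was the orthogonality $B_{[m,0]}[-1]\perp B_{[1,n]}$, and its shifted analogue $D[-1][-1]\perp D$ again reduces to $B\perp B[2]$, i.e., to weak negativity.

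Part 3 is the universal property. For "only if", for each $s\in S$ the cone $\co(s)$ lies in $B\subset\obj\du$ and so vanishes in $\cu/\du$, hence $u(s)$ is invertible in $\au[S^{-1}]_{add}$; any factorization of $F$ through $u$ therefore inverts $S$. For "if", I extend the additive functor $F:\au\to\au'$ termwise to the triangulated functor $F_*:K^b(\au)\to K^b(\au')$. Since $F(s)$ is an isomorphism, $F_*(\co(s))=\co(F(s))$ is contractible in $K^b(\au')$, so $F_*$ kills $B$ and hence the triangulated subcategory $\du$. The universal property of Verdier quotients yields a triangulated functor $\tilde F:\cu/\du\to K^b(\au')$, and since morphisms between degree-$0$ complexes in $K^b(\au')$ agree with $\mo(\au')$, the restriction of $\tilde F$ to the full subcategory $\au[S^{-1}]_{add}\subset\cu/\du$ defines an additive functor $\bar F:\au[S^{-1}]_{add}\to\au'$ with $\bar F\circ u=F$.

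The main obstacle is the refinement in the zero part of (1), i.e.\ promoting the object $T\in B_{[-1,0]}$ supplied by Proposition \ref{ploc}(3) to the explicit triangle $C_{-1}\to T\to C_0$ with $C_i\in D[i]$. This is handled by the shifted $3\times 3$-lemma induction sketched above, with only the indexing of Proposition \ref{ploc}(1) adjusted; all other steps are direct applications of the already-proven results.
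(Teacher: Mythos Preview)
Your proof is correct and follows the same route as the paper: both derive parts 1--2 by specializing Proposition \ref{ploc} to $\cu=K^b(\au)$ with $B\subset K^b(\au)^{[-1,0]}$, and handle part 3 by extending $F$ to $K^b(F)$ and invoking the universal property of the Verdier quotient. The paper's proof is simply more terse (``apply Proposition \ref{ploc}''), whereas you spell out the perp-verifications and the translation of $T\in B_{[-1,0]}$ into the triangle $C_{-1}\to T\to C_0$. One small simplification for that last step: rather than rerunning the complexity induction with a shifted split, you can apply Proposition \ref{ploc}(1) directly to $T[1]\in B_{[0,1]}$ (with $m=0,\ n=1$) to obtain $X'\to T[1]\to Y'$ with $X'\in B_{[0,0]}=D$ and $Y'\in B_{[1,1]}=D[1]$, and then shift back.
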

\begin{proof}

1-2. It suffices to note that we can apply Proposition \ref{ploc} to this
setting; note that $B$ is obviously weakly negative.

3. Certainly, $u$ makes all elements of $S$ invertible (since they become
isomorphisms in $\cu/\du$). Conversely, assume that an additive functor $F$ maps
all elements of $S$ into isomorphisms. We consider the exact functor $K^b(F):\cu\to
K^b(\au')$.
In order to verify that $F$ factors through $u$, it suffices to check that
$K^b(F)$ factors through 
 $l$.   The
universal property of 
Verdier localizations yields: 
in order to achieve this, for any distinguished triangle $X\to Y\stackrel{f}{\to} Z\to X[1]$, $X\in \obj
\lan B\ra$, we should prove that $K^b(F)(f)$ is an isomorphism.
Since $K^b(F)$ is exact,  
 it suffices to verify that $K^b(F)(X)=0$ for
such an $X$. Since $K^b(F)$  is additive, we only have to check  that it kills all
objects of $\du$. Applying the exactness of $K^b(F)$ again, we reduce this to
the fact that $K^b(F)$ kills $B$. 

Denote by $G$ the restriction of the corresponding functor from  $\cu/\du\to
K^b(\au')$ on the full subcategory $\au[S^{-1}]_{add}$ of $\cu/\du$.
Since the  image of $G$  is contained in $\au'$ we can 
replace the target of $G$ by $\au'$.
So we obtain a lift 
of $F$ onto the category $\au[S^{-1}]_{add}$. 
\end{proof}

We will finish the proof of Theorem \ref{taddlocu} (i.e., 
we will prove that for any
$F$ 
that converts elements of $S$ into invertible morphism its factorization through $u$ is necessarily unique)
in the next section. Yet 
now we 
recall that $\au[S\ob]$ is ``usually'' additive; hence the theorem mentioned implies that it is isomorphic to
$\au[S\ob]_{add}$.

\begin{coro}\label{taddloc}
Assume that $S$ contains $\id(\obj \au)$ and is closed with respect to the
direct sum operation. Then the natural functor $Q:\au[S\ob]\to \au[S\ob]_{add}$ is an isomorphism of categories.
\end{coro}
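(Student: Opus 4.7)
The plan is to reduce the corollary to showing that, under the given hypotheses, the category $\au[S\ob]$ carries a natural additive structure with respect to which the canonical functor $v:\au\to \au[S\ob]$ is additive. Granting this, $v$ is an additive functor inverting $S$, so by Theorem \ref{taddlocu} there is a unique additive functor $P:\au[S\ob]_{add}\to \au[S\ob]$ with $P\circ u=v$. The composite $Q\circ P$ is additive and satisfies $(QP)\circ u=Q\circ v=u$, so the uniqueness clause in Theorem \ref{taddlocu} forces $QP=\id$. Symmetrically, $P\circ Q$ is a functor $\au[S\ob]\to\au[S\ob]$ extending $v$ (since $PQ\circ v=P\circ u=v$), and the universal property of the Gabriel-Zisman localization applied to all functors (not only additive ones) forces $PQ=\id$. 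Since $Q$ is the identity on objects, this establishes $Q$ as an isomorphism of categories.

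The substantive step is therefore the construction of the additive structure on $\au[S\ob]$. Any morphism in $\au[S\ob]$ admits a zigzag presentation $f_1s_1\ob\dots f_ns_n\ob$ with $f_i\in\mo(\au)$ and $s_i\in S$. Given two morphisms $f,g:X\to Y$ represented by zigzags, one first pads the shorter one with identity roofs $X_i\xleftarrow{\id}X_i\xrightarrow{\id}X_i$ (these are legitimate since $\id(\obj\au)\subset S$) to equalize their lengths; then the termwise direct sum yields a zigzag $f\oplus g:X\oplus X\to Y\oplus Y$ whose denominators $s_i\oplus t_i$ lie in $S$ by closure under direct sums, hence are invertible in $\au[S\ob]$. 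One then defines
\[ f+g:=v(\nabla_Y)\circ (f\oplus g)\circ v(\Delta_X), \]
where $\Delta_X$ and $\nabla_Y$ are the diagonal and codiagonal inherited from $\au$. Parallel zigzag manipulations show that $v(0)$ is a zero object of $\au[S\ob]$ and that $v(X_1\oplus X_2)$ is a biproduct of $v(X_1),v(X_2)$ in $\au[S\ob]$, so that $\au[S\ob]$ is indeed additive and $v$ is additive.

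The principal obstacle is the bookkeeping: verifying that $f+g$ is independent of the choice of zigzag representatives, that composition is bilinear with respect to this addition, and that the biproduct universal property holds. Each of these is a standard rewriting argument on zigzags using the elementary moves ($s\ob\circ s\mapsto\id$, insertion/removal of identity roofs, merging adjacent $\au$-morphisms), and the two hypotheses on $S$ are used precisely to keep all denominators arising in intermediate zigzags inside $S$. This is essentially the classical Gabriel-Zisman argument for the additivity of a localization under these conditions; no ingredient beyond ``$\id(\obj\au)\subset S$'' and ``$S$ is closed under direct sums'' is required.
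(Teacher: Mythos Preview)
Your overall framework---reducing to the additivity of $\au[S\ob]$ and then playing the two universal properties (Gabriel--Zisman for $\au[S\ob]$, Theorem \ref{taddlocu} for $\au[S\ob]_{add}$) against each other to produce mutually inverse functors---is correct and matches the paper's reduction. The difference lies in how the additivity of $\au[S\ob]$ is established (the paper's Proposition \ref{addloc}).

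You work by hand with zigzag representatives: pad to equal length using identity roofs, take termwise direct sums (legitimate because $S$ is closed under $\oplus$), and define $f+g$ through the diagonal and codiagonal. The paper instead observes that any adjunction $L\dashv R$ between categories descends to an adjunction between localizations whenever $L$ and $R$ carry the distinguished morphism classes into one another; applying this to the diagonal $\au\to\au\times\au$ and its two adjoints (binary product and coproduct) immediately gives finite products and coproducts in $\au[S\ob]$ that coincide, after which the group-object structure transfers because the localization functor is surjective on objects and preserves products. The paper's argument is shorter and sidesteps entirely the well-definedness, bilinearity, and biproduct verifications you correctly flag as bookkeeping; your route is more concrete and makes the role of the two hypotheses on $S$ visible at the level of individual zigzags. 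Both are valid, and indeed the paper's Remark \ref{rcismo}(3) sketches yet a third, still more explicit, approach via Proposition \ref{surjww}.
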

\begin{proof}
Theorem \ref{taddlocu} yields: it suffices to verify whether $\au[S\ob]$ has a
natural structure of an additive category that is compatible with the one of
$\au$. Though this was done in \S A3 of \cite{kabiradd} (see also \cite{cismo}),
for the convenience of readers we 
included this statement into the current text as Proposition
\ref{addloc}.
\end{proof}

\begin{pr}\label{addloc}
Suppose $\au$ is an additive category, $S$ is a class of morphisms in $\au$
containing identities and closed under direct sums. Then the localized category
$\au[S^{-1}]$ is additive, as well as  the localization functor $\au\to \au[S^{-1}]$.
\end{pr}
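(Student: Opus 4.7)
The plan is to transport the biproduct/additive structure of $\au$ to $\au[S^{-1}]$. The key observation is that the hypotheses on $S$ (containing identities and closed under direct sums) are precisely what is needed to extend the direct-sum operation on objects to zig-zag morphisms in $\au[S^{-1}]$: given two zig-zags $\phi, \psi \colon X \to Y$, one pads each with inserted identities (available since $\id(\obj\au) \subset S$) to a common length $n$, then forms the componentwise direct sum to obtain a zig-zag $\phi \oplus \psi \colon X \oplus X \to Y \oplus Y$; each backward arrow $s_i \oplus t_i$ remains in $S$ by closure under direct sums.

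Given this construction, one defines addition by the standard biproduct formula
$$\phi + \psi \;:=\; \nabla_Y \circ (\phi \oplus \psi) \circ \Delta_X,$$
where $\Delta_X \colon X \to X \oplus X$ and $\nabla_Y \colon Y \oplus Y \to Y$ come from $\au$. The image of $0 \in \au(X,Y)$ serves as the additive identity; commutativity and associativity of $+$ follow from the canonical symmetries of $Y \oplus Y$ and $(Y \oplus Y) \oplus Y$ compatibly with $\Delta, \nabla$; and additive inverses are supplied by the image of $-\id_Y \in \au(Y,Y)$, since $\nabla_Y \circ (\id_Y \oplus (-\id_Y)) \circ \Delta_Y = 0$ already in $\au$. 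Bilinearity of composition follows from this definition together with the naturality of $\Delta$ and $\nabla$ in $\au[S^{-1}]$, which extends from $\au$ by checking it on morphisms in $\au$ and on inverses of elements of $S$ separately.

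To see that the localization functor $\au \to \au[S^{-1}]$ is additive, observe that for $\phi, \psi \in \au(X,Y)$ (viewed as length-zero zig-zags) the formula $\nabla_Y \circ (\phi \oplus \psi) \circ \Delta_X$ already computes $\phi + \psi$ in $\au$; its image in $\au[S^{-1}]$ is therefore exactly the sum computed via the new structure. Preservation of zero and of biproducts is immediate since these are built from morphisms of $\au$.

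The main obstacle is verifying well-definedness of the addition: two equivalent zig-zag representations of the same morphism must produce the same sum. This reduces to checking that each generating Gabriel-Zisman move (composing two adjacent morphisms from $\au$, inserting or removing a pair $s \circ s^{-1}$ or $s^{-1} \circ s$ for $s \in S$, or padding with identities) is preserved under componentwise direct-summing with a fixed second zig-zag, after which pre- and post-composition with $\Delta_X$ and $\nabla_Y$ automatically respect the equivalence. This step is not conceptually deep, but it is the main bookkeeping in the argument, and it is the reason the closure of $S$ under direct sums is needed in the hypothesis.
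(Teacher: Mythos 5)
Your proposal is correct in outline, but it takes a genuinely different route from the paper. The paper never manipulates zig-zags: it first observes that an adjoint pair $L:\au\leftrightarrows\hu:R$ descends to an adjoint pair between $\au[S^{-1}]$ and $\hu[T^{-1}]$ whenever $L(S)\subset T$ and $R(T)\subset S$, and applies this to the diagonal functor $\au\to\au\times\au$ and its left and right adjoints (binary coproduct and product, both given by $\oplus$) --- this is exactly where $\id(\obj\au)\subset S$ and the closure of $S$ under direct sums enter --- concluding that $\au[S^{-1}]$ has binary products and coproducts and that they coincide; it then notes that the localization functor preserves binary products and is surjective on objects, so every object of $\au[S^{-1}]$ is a group object, which yields additivity. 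Your argument instead transports the biproduct structure by an explicit componentwise direct sum of zig-zags; this is more elementary and makes visible exactly where each hypothesis is used, but it concentrates all the real work in the well-definedness check, and there your reduction is stated a bit too optimistically: replacing $s\circ s^{-1}$ by an identity in one summand does \emph{not} literally become a generating move in $\phi\oplus\psi$, since the aligned piece of $\psi$ is some $g\circ t^{-1}$ with $g\neq t$ in general; one has to factor $s\oplus t=(s\oplus\id)\circ(\id\oplus t)=(\id\oplus t)\circ(s\oplus\id)$ (using again that $S$ contains identities and is closed under $\oplus$) and perform a short insertion--cancellation computation, and one must also check independence of the chosen padding and alignment. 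With these points spelled out your construction does go through; conceptually it amounts to identifying $\au[S^{-1}]\times\au[S^{-1}]$ with $(\au\times\au)[(S\times S)^{-1}]$ and descending the functor $\oplus$, which is precisely the explicit counterpart of the paper's adjunction-descent argument, so the paper's proof is shorter and avoids the bookkeeping, while yours gives concrete formulas for sums of morphisms in $\au[S^{-1}]$ (in the spirit of the computations of \S\ref{slocadd}).
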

\begin{proof}
To check that the category is additive it suffices to verify that 
(binary)  coproducts and
 products exist in it, $A \times B \cong A \coprod B$ for any pair of objects, 
and every object is a group object.

Firstly, note that every adjoint pair of functors $L : \au \leftrightarrows \hu : R$
induces an adjoint pair $\hat{L} : \au[S^{-1}] \leftrightarrows \hu[T^{-1}] :
\hat{R}$ if $L(S) \subset T$ and $R(T) \subset S$. Indeed, the latter assumptions imply
that the functors $Loc_T \circ L$ and $Loc_S \circ R$ send $S$ and $T$ into
$\isom \hu[T^{-1}]$ and $\isom \au[S^{-1}]$, respectively; hence they induce some functors $\hat{L}$ and $\hat{R}$ on the
localized categories.
Now, the unit and the counit for the  pair $(L,R)$ yield the unit and the counit of the pair
$(\hat{L},\hat{R})$. 

Applying the above observation to the diagonal functor $\au \to \au \times \au$ and its right and left adjoints (i.e., the binary product
and coproduct functor, respectively) we obtain: 
in $\au[S^{-1}]$
all 
binary coproducts and products exist, and $A \times B \cong A \coprod B$ for any objects $A$ and $B$.

It remains to verify that every object of $A[S^{-1}]$ is a group object.
The localization functor $Loc_S$ preserves (binary) products; thus the image of
a group object has the structure of a group object. Since $Loc_S$ is
surjective on objects, we obtain the result.
\end{proof}

\begin{rema}\label{rcismo}
1. For an arbitrary $S\subset \mo \au$ denote by $\sss$ the closure of $S\cup \id(\obj \au)$ in $\mo(\au)$ with respect to the direct sum operation.
Certainly, in $\au[S\ob]_{add}$  
all the elements of $\sss$ become invertible. Hence 
we obtain: $\au[S\ob]_{add}$ $\cong \au[\sss\ob]_{add}\cong \au[\sss\ob]$.

2. It seems that the additivity of $\au[S\ob]$ (for $S$ as in Corollary \ref{taddloc}) is well-known to experts in the field; yet ``classical'' literature does not treat
the additive structure of $\au[S\ob]$ unless $S$ satisfies (either right of left) Ore condition. 

3. Corollary \ref{taddloc} can also be deduced from Theorem \ref{taddlocu} 
 via a ``very explicit'' method. 
For this purpose one should verify that any morphism in $\au[S\ob]$ can be presented as in Proposition \ref{surjww} below; then one should check that two morphisms presented in this form are equal in $\au[S\ob]$ if they become equal in $\au[S\ob]_{add}$ (cf. 
Proposition \ref{psum}(III.2b)).

\end{rema}

\section{On additive localizations: our computations and their comparison with  
the arguments of the other authors}\label{slocadd}

In this section we ``calculate'' the category $\au[S\ob]_{add}$ ``explicitly''.
This allows us to finish the proof of Theorem \ref{taddlocu}. 
We also describe the relation of our results with the theory of
non-commutative localizations of rings.

Some of the formulas of this section are rather unpleasant; most probably this
cannot be avoided.
Yet we note: in contrast to the preceding papers on the subject, we give a
conceptual explanation for the relations established.

\subsection
{Computations for additive localizations}
\label{scompgen}

Let $A,A'$ be some objects of $\obj \au=\obj \au[S\ob]_{add} \subset \obj
\cu/\du$.
Recall from the last section that any morphism between
$A$ and $A'$ can be presented as a certain ``roof'', i.e., as a composition of the
form $h \circ q^{-1}$ for $h,q \in Mor(\cu)$ and $\co(q) \in \du^{[-1,0]}$ (see
Proposition \ref{paddlocu}(2)). We would like to describe morphisms
$\au[S\ob]_{add}$ in terms of the category $\au$. We start with describing
compositions and sums of morphisms in terms of such roofs; next, we reformulate these results 
in terms of $\au$.

\begin{lem}\label{comp}
Let $\phi=f \circ s^{-1}\in (\cu/\du)(A,B)$, $\psi=g \circ t^{-1} \in
(\cu/\du)(B,C)$ be morphisms in $\au[S\ob]$, 
where the domains of $s$ and $t$ are $L, T \in \cu^{[0,1]}$, respectively.

Denote the following complex by $LT$:
 \begin{equation}
  L^0 \oplus T^0  \xrightarrow{\begin{pmatrix}
 d^0_L & 0  \\
 0 & d^0_T \\
 f^0 & -t^0
\end{pmatrix}}
L^1 \oplus T^1 \oplus B
\end{equation}

Then for $q \in \cu(LT, A)$, $r\in \cu(LT,C)$ such that $q^{0} =
\begin{pmatrix} s^{0} & 0
\end{pmatrix}$, $r^{0} = \begin{pmatrix} 0 & g^{0}
\end{pmatrix}$, the composition $\psi \circ \phi$ equals $r \circ q^{-1}$ (in
$\cu/\du$).
\end{lem}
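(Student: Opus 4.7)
The plan is to recognise $LT$ as the (homotopy) fibre in $K^{b}(\au)$ of the chain map $\alpha = (f, -t) : L \oplus T \to B$. Indeed, writing out the mapping cone shows that $\co(\alpha)[-1]$ agrees with $LT$ up to the automorphism of the degree-$1$ summand $B \oplus L^{1} \oplus T^{1}$ that negates the $B$-factor, so after this identification there is a distinguished triangle
$$LT \xrightarrow{\iota} L \oplus T \xrightarrow{\alpha} B \xrightarrow{+1}$$
in $\cu$ whose $\iota$ has degree-zero component $\id_{L^{0} \oplus T^{0}}$ and degree-one component the projection onto $L^{1} \oplus T^{1}$.

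Writing $p_{L}, p_{T}$ for the direct-summand projections on $L \oplus T$, I would set $\pi_{L} = p_{L} \circ \iota$ and $\pi_{T} = p_{T} \circ \iota$, and observe that $(s \circ \pi_{L})^{0} = (s^{0},0) = q^{0}$ and $(g \circ \pi_{T})^{0} = (0, g^{0}) = r^{0}$. Since $A$ and $C$ are concentrated in degree $0$, a chain map from $LT$ to either is determined by its degree-zero component; hence $q = s \circ \pi_{L}$ and $r = g \circ \pi_{T}$ in $\cu$. Moreover $\alpha \circ \iota = 0$ as two consecutive arrows of a distinguished triangle, which unfolds to the key identity
$$f \circ \pi_{L} = t \circ \pi_{T} \qquad \text{in } \cu.$$

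The main step is to verify that $q$ becomes invertible in $\cu/\du$. Applying the octahedral axiom to $\iota$ and $p_{L}$ yields a distinguished triangle $\co(\iota) \to \co(\pi_{L}) \to \co(p_{L}) \to \co(\iota)[1]$ whose connecting morphism equals (by the octahedron recipe) the shift of $\alpha \circ in_{T} = -t$; since $\co(\iota) \cong B$ and $\co(p_{L}) \cong T[1]$, a rotation produces a distinguished triangle $T \xrightarrow{-t} B \to \co(\pi_{L}) \to T[1]$, so $\co(\pi_{L}) \cong \co(t) \in \du$ by the hypothesis that $\psi$ is a valid roof. A second application of the octahedron to the composition $q = s \circ \pi_{L}$, combined with $\co(s) \in \du$ and the triangulatedness of $\du$, then yields $\co(q) \in \du$, so $q$ is an isomorphism in $\cu/\du$.

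Finally, using the identity $f \circ \pi_{L} = t \circ \pi_{T}$ and the invertibility of $s, t, \pi_{L}$ in $\cu/\du$,
$$\psi \circ \phi = g t^{-1} f s^{-1} = g t^{-1} (t \pi_{T}) \pi_{L}^{-1} s^{-1} = g \pi_{T} \pi_{L}^{-1} s^{-1} = r q^{-1}.$$
The principal technical obstacle is the sign bookkeeping needed to identify both the octahedral connecting morphism with $-t$ and the isomorphism $LT \cong \co(\alpha)[-1]$; once these are settled, the rest of the argument is routine diagram-chasing.
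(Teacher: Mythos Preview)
Your proof is correct and follows essentially the same strategy as the paper: your maps $\pi_L,\pi_T$ coincide with the paper's $q',r'$, and the final chain of equalities is identical. The only difference is in packaging: the paper reads off $\co(q')\cong\co(t)$ by recognising $LT$ directly as $\co\bigl(L\to\co(t)\bigr)[-1]$ and verifies $f\circ q'=t\circ r'$ via an explicit homotopy $h=(0,0,\id_B)$, whereas you obtain the identity $f\pi_L=t\pi_T$ for free from $\alpha\circ\iota=0$ in the fibre triangle but then pay for the cone computation with an octahedral argument.
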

\begin{proof}
Denote by $q'$ an element of $\cu(LT,L)$ such that $q'^{0} = \begin{pmatrix}
id_{L^{0}} & 0 \end{pmatrix},$ $q'^{1} = \begin{pmatrix} id_{L^{1}} & 0 & 0
\end{pmatrix}$. 
Note that $LT$ (together with $q'$) 
is a cone of the composition $L \stackrel{f}\to B \to
\co(t)$ shifted by $[-1]$. Hence $\co(q') = \co(t)$ and $q'$ is invertible in
$\cu/\du$.
The composition $s \circ q'$ equals $q$ since it is determined by its degree
zero component 
and $(f \circ q')^0 = \begin{pmatrix} s^0 & 0 \end{pmatrix}$.

Denote by $r'$ an element of $\cu(LT,T)$ such that $r'^{0} = \begin{pmatrix} 0
& id_{T^{0}} \end{pmatrix},$ $r'^{1} = \begin{pmatrix} 0 & id_{T^{1}} & 0
\end{pmatrix}$.
The composition $g \circ r'$ equals $r$ since it is determined by its degree
zero component and $(r \circ q')^0 = \begin{pmatrix} 0 & g^0 \end{pmatrix}$.
The composition $t \circ r'$ is determined by its degree
zero component and $(t \circ
r')^0 = \begin{pmatrix} 0 & t^0 \end{pmatrix}$. Moreover, the morphism
$h = \begin{pmatrix} 0 & 0 & id_B \end{pmatrix} \in \au(L^1 \oplus T^1
\oplus B, B)$ defines a homotopy between $t \circ r'$ and $f \circ q'$. So, $t
\circ r' = f \circ q'$ in $\cu$. Hence we have 
$$r\circ q^{-1} = g \circ r' \circ q'^{-1} \circ s^{-1} = g \circ t^{-1} \circ
f \circ s^{-1} = \psi \circ \phi$$ in $\cu/\du$.
\end{proof}

Now we are able to describe morphisms in $\au[S\ob]_{add}$ as certain ``short
zig-zags''.

\begin{pr}\label{surjww}
Let $A,A'$ be objects of $\au$. 
Then any $f\in (\cu/\du)(A,A')$  can be presented as the composition $g \circ
s^{-1} \circ i,$ where $A'', T
\in \obj \au$,  $g\in \au(A'',A'), s\in \au(A'',A \oplus T)$, $i\in \au(A,A \oplus T)$ is the canonical coretraction, 
whereas  $\co(s) \in \du.$
\end{pr}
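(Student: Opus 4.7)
The plan is to start from the one-roof presentation of $f$ provided by Proposition \ref{paddlocu}(1) and to rewrite it by choosing a very specific (two-term) model for the ``denominator'' complex $Z$; then $A''$, $T$, $s$, $g$ will essentially be the components of this model.

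First I would invoke Proposition \ref{paddlocu}(1) to obtain $f=h\circ q\ob$ with $q\in\cu(Z,A)$, $h\in\cu(Z,A')$ and $\co(q)\in D$. Since $D\subset K^b(\au)^{[-1,0]}$ and $A$ lies in degree $0$, I can replace $Z$ up to isomorphism in $\cu$ by the explicit cocone of the map $A\to \co(q)$, which is the two-term complex $[Z^0\stackrel{d}{\to} Z^1]$ concentrated in degrees $0$ and $1$ (with $Z^0=A\oplus \co(q)^{-1}$, $Z^1=\co(q)^0$, and $q$ equal to the projection onto $A$). After this replacement, $h$ and $q$ are entirely determined by their degree-zero components $h^0\in\au(Z^0,A')$ and $q^0\in\au(Z^0,A)$, since their degree-one components are forced to vanish.

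Next I would set $A''=Z^0$, $T=Z^1$, $g=h^0$, take $i\colon A\to A\oplus T$ to be the canonical coretraction, and define
\[
s=\begin{pmatrix}q^0\\ d\end{pmatrix}\colon A''\to A\oplus T.
\]
A direct comparison of two-term complexes in degrees $[-1,0]$ shows that $\co(s)$ differs from $\co(q)$ only by a permutation of summands in the target of the differential together with a sign change, so $\co(s)\cong \co(q)$ in $\cu$; in particular $\co(s)\in\du$, and hence $s$ becomes invertible in $\cu/\du$.

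Finally, to verify $f=g\circ s\ob\circ i$ in $\cu/\du$, I would write $\pi\colon Z\to Z^0$ for the canonical (stupid-truncation) chain map. Since $h=h^0\circ\pi=g\circ\pi$ already in $\cu$, the desired identity reduces to checking $s\circ\pi=i\circ q$ in $\cu/\du$. In fact this holds already in $K^b(\au)$: the difference has only a degree-zero component $\begin{pmatrix}0\\ d\end{pmatrix}\colon Z^0\to A\oplus T$, and it is killed by the explicit null-homotopy $\begin{pmatrix}0\\ \id_{Z^1}\end{pmatrix}\colon Z^1\to A\oplus T$. Inverting $s$ in $\cu/\du$ and composing with $g$ on the left and $q\ob$ on the right then yields $f=g\circ s\ob\circ i$. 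The only nontrivial step is the first one (passing to a two-term model for $Z$); once this normal form is in place, everything else reduces to straightforward manipulations with cones and homotopies in $K^b(\au)$.
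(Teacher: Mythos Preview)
Your proof is correct and follows essentially the same route as the paper's own argument: both reduce $f$ to a roof $h\circ q^{-1}$ with a two-term denominator complex in degrees $[0,1]$, then read off $A''$, $T$, $g$, $s$ from its components and verify $s\circ\pi=i\circ q$ up to an explicit homotopy. The only cosmetic differences are that the paper cites Proposition~\ref{paddlocu}(2) and simply asserts the two-term form of the denominator (whereas you derive it by writing the cocone of $A\to\co(q)$ explicitly), and the order of the summands $A$ and $T$ in the target of $s$ is swapped.
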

\begin{proof}
Note first:  by  
Proposition \ref{paddlocu}(2), 
$f$ can be presented as the composition $x \circ y^{-1},$ where $x \in
\au(C,A'), y \in \au(C,A)$, $\co (y) \in \du$, 
for some $C = C^{0} \stackrel{d}\to C^{1}\in K^b(\au)^{[0,1]}$.

Now, consider the composition $g \circ s^{-1} \circ i,$ where $i\in \au(A,C^{1}
\oplus A)$ is the canonical coretraction, $s:C^{0} 
\xrightarrow{\begin{pmatrix} d \\ y^{0} \end{pmatrix}} C^{1} \oplus A$, $g = x^{0}\in \au(C^{0}, A').$ We note that $s$ is invertible 
in $\cu/\du$
since its cone is isomorphic to the cone of $y$.

Denote by $p$ the morphism $C \to C^0$ coming from the distinguished triangle
$C^0[-1] \to C^1[-1] \to C \to C^0$. Note that $x = g \circ p$ and $i \circ y =
s \circ p.$

Hence we obtain the result: $g \circ s^{-1} \circ i = g \circ p \circ y^{-1} = x \circ y^{-1} = f.$

\end{proof}

\begin{rema}\label{rdual}
1. One can also generalize the proposition above to the setting of  an arbitrary triangulated category $\cu$ and its subcategory $\du$ generated by a weakly negative class of objects (cf. Proposition  \ref{surj}).

2. In order to finish the proof of Theorem \ref{taddlocu}, we should verify:   for any
$F$ as in  Proposition \ref{paddlocu} its factorization through $u$ is necessarily unique. Consider the smallest (in the sense of inclusions  of 
 morphisms sets) additive subcategory $\au_0$ of $\au[S\ob]_{add}$
 such that $\mo(A_0)$ contains  $u(\mo(\au))$ and also the inverses to all elements of $S$ (note that $\obj \au=\obj \au_0=\obj \au[S\ob]_{add}$). 
It certainly suffices to prove that $\au_0=\au[S\ob]_{add}$.

Hence it suffices to prove: if $s\in \au(A'', A \oplus T)$ and $\co(s)\in D$ then $s$ becomes invertible in 
$\au_0$. 
Since this is equivalent to $\co(s)$ being zero in $K^b(\au_0)$, it suffices to verify: any $L\in D$ is isomorphic to the cone of some $\au$-morphism that becomes invertible in $\au_0$. Since $\au_0$ will not change if we replace $S$ by the closure of $S\cup \id(\obj \au)$ in $\mo(\au)$ with respect to the direct sum operation, for this purpose it suffices to apply Proposition \ref{exts} below.

3. All of the results of this paper are self-dual. In particular, 
Proposition \ref{surjww}  also yields that every $f \in (\cu/\du)
(A,A')$ can be presented as the composition $p \circ s^{-1} \circ h,$ where $p$ is a retraction, $\co (s)\in D$.

\end{rema}

Now we  calculate $D$.

\begin{pr}\label{trmatrix}
The elements of $D$ are exactly those complexes that are isomorphic to 
  cones of morphisms which are given by lower triangular matrices with
elements of  $S$ 
on the diagonal, i.e., 
to complexes of the form
  \begin{equation}\label{etrmor}
  \bigoplus_{j=1}^{n} S^{-1}_{j} \xrightarrow{\begin{pmatrix}
s_{1} & 0 &\dots &0  \\
f_{12}& s_{2} &\dots &0  \\
\\ \dots &\dots &\dots &\dots\\
f_{1n} & f_{2n} & \dots &s_{n}
\end{pmatrix}}
\bigoplus_{j=1}^{n} S^{0}_{j}
\end{equation}
for $n\ge 0$, $s_{1},s_{2},\dots s_{n}\in S$, 
$f_{kl}\in \au( S^{-1}_{k},  S^{0}_{l})$ for all $1\le k<l\le n$.

\end{pr}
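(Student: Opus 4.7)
The plan is to prove the two inclusions separately, each by a short induction. In both cases the only tools needed beyond the definitions are the explicit cone construction in $K^b(\au)$ and the degree bounds on $K^b(\au)^{[-1,0]}$.

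For the inclusion ``(\ref{etrmor})-cones $\subseteq D$'', I would induct on $n$. The cases $n=0$ and $n=1$ give $0\in D$ and $\co(s_1)\in B\subset D$ respectively. For $n\ge 2$, observe that the lower-right $(n-1)\times (n-1)$ block of the matrix in (\ref{etrmor}) is itself lower triangular with $S$-entries on the diagonal, and that the corresponding two-term complex with source $\bigoplus_{j=2}^n S_j^{-1}$ and target $\bigoplus_{j=2}^n S_j^0$ is a \emph{subcomplex} of (\ref{etrmor}) --- precisely because the first-column entries $f_{1,l}$ for $l\ge 2$ land inside its degree-zero part. The quotient complex is $S_1^{-1}\xrightarrow{s_1}S_1^0\in B$. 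The resulting short exact sequence of $\au$-complexes yields a distinguished triangle in $K^b(\au)$ whose outer terms lie in $D$ (by induction and $B\subset D$), so the middle term does too.

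For the reverse inclusion ``$D\subseteq$ (\ref{etrmor})-cones'', I would induct on the extension-depth used to build an object of $D$ out of $0$ and elements of $B$. The base cases are immediate. For the inductive step, suppose $X$ sits in a distinguished triangle $Y\to X\to Z\to Y[1]$ with $Y\cong \co(M_Y)$ and $Z\cong \co(M_Z)$ already of the required form. Both $Y$ and $Z$ lie in $K^b(\au)^{[-1,0]}$, so $Y[1]\in K^b(\au)^{[-2,-1]}$. Any morphism $\phi:Z\to Y[1]$ in $K^b(\au)$ is therefore represented by a \emph{unique} chain map --- the only potential homotopy would be a degree-$(-1)$ map $Z^i\to Y^{i-2}$, which vanishes for all $i$ by a trivial degree check --- and this chain map amounts to a single $\au$-morphism $\psi:Z^{-1}\to Y^0$. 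Rotating the triangle, $X$ is isomorphic in $K^b(\au)$ to the cone of $\psi$ viewed as a morphism $Z[-1]\to Y$.

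Writing this cone out explicitly, $X$ has degree $-1$ part $Y^{-1}\oplus Z^{-1}$, degree $0$ part $Y^0\oplus Z^0$, and differential the block matrix $\begin{pmatrix} M_Y & \psi\\ 0 & M_Z\end{pmatrix}$, which is block upper triangular. Reordering the summands so that the $Z$-components come first (an isomorphism of complexes) converts it into the block lower triangular matrix $\begin{pmatrix} M_Z & 0\\ \psi & M_Y\end{pmatrix}$. By induction $M_Y$ and $M_Z$ are themselves lower triangular with $S$-elements on the diagonal, so the whole $(m+n)\times (m+n)$ matrix has the form (\ref{etrmor}). The main technical point is precisely this bookkeeping: verifying that $K^b(\au)$-morphisms $Z\to Y[1]$ really do collapse to single $\au$-morphisms (immediate from the degree bounds) and permuting the direct summands so that ``upper triangular'' becomes ``lower triangular'' without perturbing the diagonal.
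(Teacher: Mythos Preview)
Your approach is essentially the paper's own (the paper is simply terser, saying the extension-closure step is ``obvious from the definition of cones and distinguished triangles in $K^b(\au)$''). However, two subsidiary statements in your write-up are incorrect, even though neither breaks the argument.

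First, in the inclusion ``(\ref{etrmor})-cones $\subseteq D$'', your stated reason for the $j\ge 2$ block being a subcomplex is backwards. The first-\emph{column} entries $f_{1,l}$ map $S_1^{-1}$ (which is \emph{not} in the proposed subcomplex) into $\bigoplus_{j\ge 2}S_j^0$; they play no role in checking the subcomplex condition. What you actually need is that the first-\emph{row} entries in columns $k\ge 2$ vanish, i.e., that the differential sends $\bigoplus_{j\ge 2}S_j^{-1}$ into $\bigoplus_{j\ge 2}S_j^0$; this is precisely lower-triangularity. Your conclusion is right, only the justification needs swapping.

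Second, and more substantively, your uniqueness claim for the chain map $\phi:Z\to Y[1]$ is false. A homotopy is a family $h^i:Z^i\to (Y[1])^{i-1}=Y^i$, \emph{not} $Z^i\to Y^{i-2}$; you have shifted in the wrong direction. Thus $h^{-1}:Z^{-1}\to Y^{-1}$ and $h^0:Z^0\to Y^0$ can both be nonzero, and two chain maps $\psi,\psi':Z^{-1}\to Y^0$ are homotopic precisely when $\psi-\psi'=\pm M_Y h^{-1}+h^0 M_Z$. Fortunately you never use uniqueness: all that matters is that \emph{some} chain representative exists and is given by a single $\au$-morphism $\psi:Z^{-1}\to Y^0$ (which you correctly verified), and that the cone of any such representative computes $X$ up to isomorphism in $K^b(\au)$. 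So the proof stands once you delete the uniqueness sentence.
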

\begin{proof} 
Obviously, it suffices to prove: if $t_1,t_2$ are morphisms of the form (\ref{etrmor}), $\co(t_1)\to T \to \co (t_2)\to \co (t_1)[1]$ is a distinguished triangle, then $T$ is isomorphic to the cone of another morphism of the form (\ref{etrmor}). The latter is obvious from the definition of cones (and distinguished triangles) in $K^b(A)$.

\end{proof}

So, the following statement finishes the proof of Theorem \ref{taddlocu}.

\begin{pr}\label{exts}
 Assume that $S$ contains 
 $\id(\obj \au)$ and  is closed under the direct sum operation.
Then 
any morphism of the form described in (\ref{etrmor}) becomes   invertible   in the category $\au[S\ob].$
\end{pr}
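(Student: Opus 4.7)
The plan is to bootstrap on Proposition \ref{addloc}, which --- under the very hypotheses of our proposition --- guarantees that $\au[S\ob]$ is already an additive category (and $\au\to\au[S\ob]$ is an additive functor). This is legitimate since the proof of Proposition \ref{addloc} is independent both of Theorem \ref{taddlocu} and of Proposition \ref{exts} itself. Once additivity of $\au[S\ob]$ is in hand, the direct sums $\bigoplus_{j=1}^n S_j^{-1}$ and $\bigoplus_{j=1}^n S_j^0$ retain their biproduct structure under the localization functor, so the morphism (\ref{etrmor}) can be genuinely manipulated as a matrix inside $\au[S\ob]$.

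I then proceed by induction on $n$. For $n=1$ the matrix collapses to $s_1\in S$, which is invertible in $\au[S\ob]$ by the very definition of the localization. For the inductive step $n\ge 2$, I split off the last row and column of $M$ and write it as a $2\times 2$ block lower triangular morphism
\[
M=\begin{pmatrix} M' & 0 \\ F & s_n \end{pmatrix},
\]
where $M'$ is the $(n-1)\times(n-1)$ principal block (again of the form (\ref{etrmor})), $F=(f_{1n},\ldots,f_{n-1,n})$ is the remaining off-diagonal part of the last row, and $s_n\in S$. The inductive hypothesis furnishes an inverse $(M')^{-1}$ in $\au[S\ob]$, while $s_n^{-1}$ exists by construction; the standard block lower triangular inversion formula, valid in any additive category,
\[
M^{-1}=\begin{pmatrix} (M')^{-1} & 0 \\ -s_n^{-1}F(M')^{-1} & s_n^{-1} \end{pmatrix},
\]
then provides a two-sided inverse to $M$, closing the induction.

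The conceptual heart of the argument, and the only non-routine step, is the appeal to Proposition \ref{addloc}: without additivity of $\au[S\ob]$ there is no \emph{a priori} meaning to ``block matrix inversion'' inside the localized category, and a direct zig-zag construction of $M^{-1}$ would be considerably more painful. Once additivity has been granted, the induction is purely formal. Observe in particular that the closure of $S$ under direct sums is used only once --- in invoking Proposition \ref{addloc} --- after which the only remaining use of the hypothesis is the invertibility of each individual $s_i$ in $\au[S\ob]$.
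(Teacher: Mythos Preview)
Your proof is correct, and the overall skeleton (induction on $n$ with the same $2\times 2$ block splitting of the matrix) matches the paper's. The execution of the inductive step, however, is genuinely different.

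The paper never invokes Proposition \ref{addloc}. Instead it factors the matrix \emph{inside} $\au$ as
\[
\begin{pmatrix} t & 0 \\ g & s_n \end{pmatrix}
=\begin{pmatrix} t & 0 \\ 0 & \id \end{pmatrix}
\circ\begin{pmatrix} \id & 0 \\ g & \id \end{pmatrix}
\circ\begin{pmatrix} \id & 0 \\ 0 & s_n \end{pmatrix},
\]
notes that the middle factor is an isomorphism in $\au$, that the last factor lies in $S$ (here the hypotheses ``$S$ contains identities and is closed under $\oplus$'' are used directly), and then applies the inductive hypothesis to $t$ in the strengthened form ``$t$ is a composition of morphisms each in $S\cup\isom(\au)$''. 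So the paper actually proves the stronger statement that every matrix of the form (\ref{etrmor}) factors in $\au$ as a product of elements of $S\cup\isom(\au)$; invertibility in $\au[S\ob]$ is then trivial. This stronger formulation is precisely what is used in Remark \ref{rdual}(2), where one needs invertibility not in $\au[S\ob]$ but in the subcategory $\au_0\subset\au[S\ob]_{add}$.

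Your route---grant additivity of $\au[S\ob]$ via Proposition \ref{addloc} and then write down the block lower-triangular inverse---is cleaner and avoids the factorization bookkeeping, at the cost of depending on Proposition \ref{addloc} and of proving only the statement as literally written. Note, though, that your argument transplants verbatim to any additive category in which each $s_i$ is invertible (in particular to $\au_0$), so with a one-line remark it would also serve the application in Remark \ref{rdual}(2).
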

\begin{proof}
It suffices to verify that $f$ can be presented as the composition of morphisms each of those is either invertible in $\mo(\au)$
or belongs to $S$.
We prove the latter statement by induction on $n$ (i.e., on the size of the matrix). 
 For $n=1$ there is nothing to prove. Suppose  that the assertion is valid for $n=k\ge 1$.

Now we make the inductive step. 
We present an $f$ of the form (\ref{etrmor}) 
for $n=k+1$ 
as a matrix 
 $s = \begin{pmatrix} t & 0 \\ g & s_n \end{pmatrix},$ where 
 $$t=\bigoplus_{j=1}^{k} S^{-1}_{j} \xrightarrow{\begin{pmatrix}
s_{1} & 0 &\dots &0  \\
f_{12}& s_{2} &\dots & 0  \\
\\ \dots &\dots &\dots &\dots\\
f_{1k} & f_{2k} & \dots &s_{k}
\end{pmatrix}}
\bigoplus_{j=1}^{k} S^{0}_{j},$$ $g=(f_{1n}, f_{2n}, \dots, f_{kn})$. 

Then
  $f = \begin{pmatrix} t & 0 \\ 0 & \id_{S^{0}_{n}} \end{pmatrix}
  \circ \begin{pmatrix} \id_{\bigoplus_{j=1}^{k} S^{-1}_{j}} & 0 \\ f &
  \id_{S^{0}_{n}} \end{pmatrix} \circ \begin{pmatrix} 
  \id_{\bigoplus_{j=1}^{k} S^{-1}_{j}} & 0 \\ 0 & s_n \end{pmatrix}.$
  Applying the inductive assumption to $t$ we decompose 
 it as $e_1\circ e_2\circ\dots\circ e_m$ for some $m\ge 0$, $e_i\in S\cup \isom(\au)$.
  Then $\begin{pmatrix} t & 0 \\ 0 & \id_{S^{0}_{n}} \end{pmatrix}=
  (e_1\bigoplus \id_{S^{0}_{n}})\circ (e_2\bigoplus \id_{S^{0}_{n}})\circ
 \dots\circ (e_m \bigoplus \id_{S^{0}_{n}})$ is a composition of the type desired (note that $S\cup \isom(\au)$ is closed under the operation $- \bigoplus \id_{S^{0}_{n}}$).
Now, the second morphism matrix is invertible in $\au$  since $\begin{pmatrix}
\id_{\bigoplus_{j=1}^{k} S^{-1}_{j}} & 0 \\ -g & \id_{S^{0}_{n}}
\end{pmatrix}$ is its inverse; the third morphism belongs to $S$ itself.

\end{proof}

\begin{rema}\label{rexpl}
This result yields a way to relate the morphism $s$ mentioned in Proposition \ref{surjww} with $S$ very explicitly. Indeed, we can assume that the complex $C$ mentioned in the proof of loc. cit. is an extension in $C^b(\au)$ (i.e., we do not identify homotopy equivalent morphisms of complexes!) of $\co (y)[-1]$ by $A$, whereas $\co (y)$ is a morphism as in (\ref{etrmor}). 
This gives us an explicit description of invertible morphisms that are needed for the decompositions in Proposition \ref{surjww}.

Hence Proposition \ref{psum} can be translated into certain explicit (though quite clumsy) matrix formulas that only mention $\au$ and $S$.
\end{rema}

Now we describe the composition of morphisms in $\au[S\ob]_{add}$, their
addition, and their equality in terms of $\au$ and $D$. 
 Recall here that all morphisms whose cones belong to $D$ become invertible in $\au[S\ob]_{add}$ (see Remark \ref{rdual}(2)); 
one can also describe all $s_i$ in the formulas below using Remark \ref{rexpl}.

By Proposition \ref{surjww}, any 
$\au[S\ob]_{add}$-morphism can be presented as the composition $g \circ s^{-1}
\circ i$. Now we compute 
all the basic categorical operations for morphisms presented in this form (so,
one may say that we describe $\au[S\ob]_{add}$ ``in terms of generators and
relations''). 

\begin{pr}\label{psum}
Let $A_1,A'_1,A_2,A'_2\in \obj \au=\obj \au[S\ob]_{add}$. Present the morphisms 
$f_j\in (\cu/\du)(A_j,A'_j)$ as $f_j=g_j \circ s_j^{-1} \circ i_j$ (for $j=1,2$). 
Here $C_j,C'_j\in \obj \au$, $g_j\in \au(C'_j,A'_j)$,  $s_j\in  \au(C'_j,C_j)$, $\co (s_j)\in D$, $i_j\in \au(A_j,C_j)$ (we do not require $i_j$ to be coretractions). 
Then the following statements are valid.

\begin{enumerate}
\item $f_1 \oplus f_2 = (g_1\oplus g_2) \circ (s_1\oplus s_2)^{-1} \circ
(i_1\oplus i_2)$.

\item Assume that $A_1=A'_2$. Then $f_1 \circ f_2 = \begin{pmatrix} 0 & g_1
\end{pmatrix} \circ \begin{pmatrix} s_2 & 0 \\ -i_1 \circ g_2 & s_1
\end{pmatrix}\ob \circ \begin{pmatrix} i_2 \\ 0 \end{pmatrix}$.

\item Let $A_1=A_2=A$, $A'_1=A'_2=A'$. Then the following statements are 
 valid.

a. $f_1+f_2 = \begin{pmatrix} g_1 & g_2 \end{pmatrix} \circ (s_1\oplus s_2)^{-1}
\circ \begin{pmatrix} i_1 \\ i_2\end{pmatrix}$.

b. Denote the morphism $\begin{pmatrix} i_1 & s_1 & 0 \\ i_2 & 0 & s_2
\end{pmatrix}\in \cu(A \oplus C'_1 \oplus C'_2 , C_1 \oplus C_2)$ by $r$.

Then the following assertions are equivalent:

\begin{itemize}

\item $f_1=f_2$

\item  For $$T = A \oplus C'_1 \oplus C'_2 \xrightarrow{\begin{pmatrix} i_1 &
s_1 & 0 \\ i_2 & 0 & s_2 \end{pmatrix}} C_1 \oplus C_2.$$ there exists a
factorization of the morphism $\begin{pmatrix}0 & g_1 & -g_2 \end{pmatrix}:T
\to A'$ through an object of $\obj\du$ in $C^b(\au)$.

\item  There exist objects $Z,Z',T_1,T_2 \in \obj \au$ and morphisms $k_1\in
\au(Z, T_1),$ $k_2\in \au(T_2,Z')$,  $p \in \au(Z,T_2),g \in \au(T_1,Z'),
\alpha_1^1 \in \au(C_1 \oplus C_2,Z'), \alpha_1^0 \in \au(A \oplus C'_1 \oplus
C'_2,T_1 \oplus T_2)$, and $\alpha_2 \in \au(T_1 \oplus T_2, A')$ such that:
$\co (k_i) \in D$ and the following equality is fulfilled:

$$\begin{pmatrix} \alpha_2 \\ \begin{pmatrix} g & k_2 \end{pmatrix}
\end{pmatrix} \circ \begin{pmatrix} \alpha_1^0 & \begin{pmatrix} k_1 \\ p
\end{pmatrix} \end{pmatrix} = \begin{pmatrix} \begin{pmatrix} 0 & g_1
& -g_2 \end{pmatrix} & 0\\ \alpha_1^1\circ r & 0 \end{pmatrix}.$$

\end{itemize}

\end{enumerate}

\end{pr}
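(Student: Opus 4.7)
My plan proceeds in four stages.

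First, assertions 1, 2, and 3(a) are formal matrix identities. The preliminary step is to verify that the matrix being inverted in each formula has a cone in $D$, and is therefore invertible in $\au[S^{-1}]_{add}$. For the diagonal matrix $s_1 \oplus s_2$ this is trivial since $\co(s_1 \oplus s_2) = \co(s_1) \oplus \co(s_2)$. For the triangular matrix $M = \begin{pmatrix} s_2 & 0 \\ -i_1 g_2 & s_1 \end{pmatrix}$ of assertion 2, one observes (as in Proposition \ref{trmatrix}) that $\co(M)$ is an extension of $\co(s_1)$ by $\co(s_2)$, so $\co(M) \in D$ by extension-closedness. Each formula then follows from multiplying out the formal inverse; in particular $M^{-1} = \begin{pmatrix} s_2^{-1} & 0 \\ s_1^{-1} i_1 g_2 s_2^{-1} & s_1^{-1} \end{pmatrix}$, and $\begin{pmatrix} 0 & g_1 \end{pmatrix} \cdot M^{-1} \cdot \begin{pmatrix} i_2 \\ 0 \end{pmatrix}$ collapses to $g_1 s_1^{-1} i_1 g_2 s_2^{-1} i_2 = f_1 \circ f_2$.

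The main content of assertion 3(b) is the equivalence $(i) \Leftrightarrow (ii)$. By part 3(a), $f_1 = f_2$ iff the roof
\[
\begin{pmatrix} g_1 & -g_2 \end{pmatrix} \circ (s_1 \oplus s_2)^{-1} \circ \begin{pmatrix} i_1 \\ i_2 \end{pmatrix}
\]
vanishes in $\cu/\du$. I would then convert this triple composition into a single roof $h \circ q^{-1}$ by a standard octahedral construction, essentially reversing the proof of Proposition \ref{surjww}: setting $s = s_1 \oplus s_2$ and $i = \begin{pmatrix} i_1 \\ i_2 \end{pmatrix}$, one observes that the complex $T$ of (b) is (up to sign) the cone of $(i, -s)$ in $K^b(\au)$, so it appears as the pullback-like term of a distinguished triangle, with a natural map $q : T \to A$ obtained via projection onto the first summand. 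The octahedral axiom applied to the factorization $T \to A \oplus C'_1 \oplus C'_2 \to A$ yields $\co(q) \cong \co(s) \in D$, and direct computation identifies the corresponding $h : T \to A'$ as the morphism with degree $-1$ component $\begin{pmatrix} 0 & g_1 & -g_2 \end{pmatrix}$. Proposition \ref{paddlocu}(1) now gives $(i) \Leftrightarrow (ii)$.

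For $(ii) \Leftrightarrow (iii)$, I would read (iii) as an explicit unfolding of the chain-level factorization in (ii). Given the data of (iii), assemble the three-term complex $V$ with $V^{-2} = Z$, $V^{-1} = T_1 \oplus T_2$, $V^0 = Z'$, and differentials $\begin{pmatrix} k_1 \\ p \end{pmatrix}$ and $\begin{pmatrix} g & k_2 \end{pmatrix}$: the chain-complex condition is $g k_1 + k_2 p = 0$, precisely the fourth scalar equation extracted from the matrix identity. Since $V$ can be realized (with appropriate signs) as the cone in $K^b(\au)$ of a chain map between shifts of $\co(k_1)$ and $\co(k_2)$, both lying in $D \subset \du$, we obtain $V \in \obj \du$. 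The remaining three scalar equations state that $(\alpha_1^0, \alpha_1^1)$ and $\alpha_2$ are chain maps $T \to V$ and $V \to A'$ whose composition recovers $h$, yielding $(iii) \Rightarrow (ii)$. Conversely, an arbitrary factorization of $h$ through some $V' \in \obj \du$ in $C^b(\au)$ is reduced to one through a $V$ of this specific three-term shape by applying Proposition \ref{paddlocu}(1) (or the underlying Proposition \ref{ploc}) to $V'$ itself. The main obstacle lies precisely here: tracking signs carefully in the cone construction and verifying that every $\du$-factorization may be replaced by one of the specific three-term form, while the remaining work is direct matrix manipulation.
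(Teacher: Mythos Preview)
Your approach to parts 1, 2, and 3(a) via direct matrix manipulation in the localized category is correct and cleaner than the paper's route. The paper instead applies Lemma \ref{comp} (which builds the composite roof explicitly as a two-term complex in $K^b(\au)^{[0,1]}$) followed by Proposition \ref{surjww}; this is more laborious but has the payoff of producing, for 3(b), the explicit roof $x \circ y^{-1}$ with $x^0 = \begin{pmatrix} 0 & g_1 & -g_2 \end{pmatrix}$ and $y^0 = \begin{pmatrix} \id_A & 0 & 0 \end{pmatrix}$ as morphisms out of the complex $T$ already sitting in degrees $[0,1]$. Your octahedral reconstruction of this roof is fine in principle, though the degree and sign bookkeeping needs care (in the paper's conventions $T$ lives in degrees $0,1$, not $-1,0$).

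There is, however, a genuine gap in your treatment of 3(b). Proposition \ref{paddlocu}(1) yields a factorization of $h$ through an object of $B_{[-1,0]}$ \emph{in $K^b(\au)$}, whereas condition (ii) demands a factorization \emph{in $C^b(\au)$}. These are not the same: a $K^b$-factorization $h = \beta \circ \alpha$ only says that the specific $C^b$-representative $\begin{pmatrix} 0 & g_1 & -g_2 \end{pmatrix}$ differs from a chain-level composite $\beta \alpha$ by a null-homotopic map. The paper closes this gap by observing that any null-homotopic morphism $T \to A'$ itself factors in $C^b(\au)$ through the contractible complex $\co(\id_{A'})[-1]$, which lies in the relevant class; hence the set of $C^b$-morphisms admitting such a factorization is a subgroup containing all null-homotopic ones, and the $K^b$-factorization upgrades to a $C^b$-one. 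Without this step your implication $(i) \Rightarrow (ii)$ is incomplete.

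Relatedly, you locate ``the main obstacle'' in the direct implication $(ii) \Rightarrow (iii)$, i.e., in replacing an arbitrary $\du$-factorization by one of the specific three-term shape. The paper does not argue this way: it runs $(ii) \Rightarrow (i)$ trivially (any $C^b$-factorization through $\obj\du$ kills the morphism in $\cu/\du$), and then $(i) \Rightarrow (iii)$ via Proposition \ref{paddlocu}(1) together with the $C^b$-upgrade just described, after which the data of (iii) are read off from the resulting diagram (\ref{redf}). The real work sits in $(i) \Rightarrow (iii)$, not in $(ii) \Rightarrow (iii)$.
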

\begin{proof}

\begin{enumerate}
\item Immediate from the functoriality of the direct sum operation (note that the direct sum of invertible morphisms is invertible).

\item By Lemma \ref{comp} $g_1 \circ s_1\ob \circ (i_1 \circ g_2) \circ s_2\ob
= x \circ y\ob$ for $x\in \cu(T,A'_1),$ $y\in \cu(T, C_2)$, $y^0 =
\begin{pmatrix} s_2 & 0\end{pmatrix},$ $x^0 = \begin{pmatrix} 0 & g_1
\end{pmatrix},$ and $T = C'_2 \oplus C'_1 \xrightarrow{\begin{pmatrix} i_1
\circ g_2 & s_1 \end{pmatrix}} C_1 \in K^b(\au)^{[0,1]}.$ According to Lemma \ref{surjww}, we can present $x \circ y^{-1}$ as $g \circ s\ob \circ i,$ where $$i = \begin{pmatrix} id_{C_2} \\ 0 \end{pmatrix}:C_2 \to C_2 \oplus C_1,$$ $$s = \begin{pmatrix} -s_2 & 0 \\ i_1 \circ g_2 & s_1 \end{pmatrix}:C'_2 \oplus C'_1 \to C_2 \oplus C_1,$$ $$g = \begin{pmatrix} 0 & g_1 \end{pmatrix}:C'_2 \oplus C'_1 \to A'_1.$$

So, $f_1 \circ f_2 = g_1 \circ s_1\ob \circ (i_1 \circ g_2) \circ s_2\ob \circ i_2 = 
 g \circ s\ob \circ i \circ i_2 \\
 = g \circ (-id_{C'_2} \oplus id_{C'_1}) \circ (-id_{C'_2} \oplus id_{C'_1})\ob
 \circ s\ob \circ i \circ i_2.$

Writing the above explicitly we get: $$\begin{aligned} f_1 \circ f_2 = g \circ (-id_{C'_2} \oplus id_{C'_1}) \circ (s \circ (-id_{C'_2} \oplus id_{C'_1}))\ob \circ i \circ i_2 \\
= \begin{pmatrix} 0 & g_1 \end{pmatrix} \circ \begin{pmatrix} s_2 & 0 \\ -i_1 \circ g_2 & s_1 \end{pmatrix}\ob \circ \begin{pmatrix} i_2 \\ 0 \end{pmatrix}.\end{aligned}$$

\item
a. 
$f_1 + f_2 = \begin{pmatrix} id_{A'} & id_{A'} \end{pmatrix} \circ f_1 \oplus
f_2 \circ \begin{pmatrix} id_A \\ id_A \end{pmatrix}$. Applying
assertion 1 we present the expression in the form: $\begin{pmatrix} id_{A'}
& id_{A'} \end{pmatrix} \circ g_1 \oplus g_2 \circ (s_1 \oplus s_2)^{-1} \circ i_1 \oplus
i_2 \circ \begin{pmatrix} id_A \\ id_A \end{pmatrix} = \begin{pmatrix}
g_1 & g_2 \end{pmatrix} \circ (s_1\oplus s_2)^{-1} \circ \begin{pmatrix}
i_1 \\ i_2\end{pmatrix}.$

b. By the previous assertion, $f_1-f_2=\begin{pmatrix} g_1 & -g_2 \end{pmatrix}
\circ (s_1\oplus s_2)^{-1} \circ \begin{pmatrix} i_1 \\ i_2\end{pmatrix}$.
Hence by Lemma \ref{comp}, $f_1 - f_2$ equals $x \circ y\ob$ in $\cu/\du$,
where $x = \begin{pmatrix}0 & g_1 & -g_2 \end{pmatrix}\in \cu(T, A'),$ $y =
\begin{pmatrix} id_A & 0 & 0 \end{pmatrix} \in \cu(T, A).$ Applying Proposition
\ref{paddlocu}(1) we obtain: $x \circ y\ob  = 0$ if and only if $x$ factors
through some $T' \in \obj\cu$ that can be presented as a cone of some
$\cu$-morphism between two elements of $D[-1]$.

Firstly, note that the existence of a factorization of $x$ through an object of
 $\du_{[-1,1]}$ follows from (and hence is equivalent to) the existence of
 factorization of any representative of $x$ in $C^b(\au)$ 
 through $\du_{[-1,1]}$. Indeed, the set $H$ of 
$C^b(\au)$-morphisms from $T$ to $A'$ that can be factored through
$\du_{[-1,1]}$ is an additive group (because $\du_{[-1,1]}$ is an additive
subcategory in $C^b(\au)$).
An element $f$ of $C^b(\au)(T,A')$ which is null-homotopic via a homotopy $h$
can be factored through $\co(id_{A'})[-1] \in \du_{[-1,1]}$ via morphisms that are written down in
columns of the following diagram:

\begin{equation}
\begin{CD}
A \oplus C'_1 \oplus C'_2 @>{\begin{pmatrix} i_1 & s_1 & 0 \\ i_2 & 0 &
s_2 \end{pmatrix}}>> C_1 \oplus C_2\\
@VV{f}V @VV{h}V\\ 
A'@>{id_{A'}}>>A'
\\ @VV{id_{A'}}V @. \\
A' @.
\end{CD}
\end{equation}
 
So $H$ also contains all null-homotopic morphisms. 
Thus if some $C^b(\au)$-represen\-tative of $x$ differs from $g$ by a
null-homotopic morphism $h$ (where $g \in H$), then it also belongs to $H$ as
the sum $h + g$, i.e., it can be factored through $\du_{[-1,1]}.$

Let $x' = \begin{pmatrix}0 & g_1 & -g_2 \end{pmatrix}$ be a lift of $x$ to
$C^b(\au)$.
In order to prove the equivalence of the  last two of our assumptions, it remains to 
write down explicitly what does it mean for $x'$ to factor through a cone of a $\cu$-morphism between two elements of $D[-1]$.
A cone of a $\cu$-morphism between two elements of $D[-1]$ has the form $Z
\stackrel{\begin{pmatrix} k_1 \\ p \end{pmatrix}}\to T_1 \oplus T_2
\xrightarrow{\begin{pmatrix} g & k_2 \end{pmatrix}} Z'$ for some $p,g \in
\mo(\au),$ $k_1,k_2 \in S.$

So $f_1=f_2$ if and only if
there exist a factorization of $x'$ through some complex 
$T' = Z
\stackrel{\begin{pmatrix} k_1 \\ p \end{pmatrix}}\to T_1 \oplus T_2
\xrightarrow{\begin{pmatrix} g & k_2 \end{pmatrix}} Z'$,  i.e., morphisms
$\alpha_1 \in C^b(\au)(\co(r), T')$ and $\alpha_2 \in C^b(\au)(T', A')$ such
that $\alpha_2 \circ \alpha_1 = x'.$

We put these morphisms into a diagram whose rows are $T,T',A'$, respectively: 
\begin{equation}\label{redf}
\begin{CD}
@. A \oplus C'_1 \oplus C'_2 @>{\begin{pmatrix} i_1 & s_1 & 0 \\ i_2 & 0 &
s_2 \end{pmatrix}}>> C_1 \oplus C_2\\
@. @VV{\alpha_1^0}V @VV{\alpha_1^1}V\\ 
Z@>{\begin{pmatrix} k_1 \\ p \end{pmatrix}}>>T_1 \oplus T_2
@>{\begin{pmatrix} g & k_2 \end{pmatrix}}>> Z' \\ @VV{}V @VV{\alpha_2
}V @.\\
0 @>{}>> A' @.
\end{CD}
\end{equation}

The diagram yields: $\begin{pmatrix} g & k_2 \end{pmatrix} \circ \alpha_1^0 =
\alpha_1^1 \circ \begin{pmatrix} i_1 & s_1 & 0 \\ i_2 & 0 & s_2
\end{pmatrix}$ and
$\alpha_2 \circ \begin{pmatrix} k_1 \\ p \end{pmatrix} = 0.$

 So, we have $\begin{pmatrix} 0 & g_1 & -g_2 \end{pmatrix} = \alpha_2 \circ
 \alpha_1^0.$

Summarizing all the assumptions made and writing the equalities in a matrix
form we obtain the result.
\end{enumerate}
\end{proof}


\subsection{The comparison of our results with the theory of Cohn's localizations}\label{scompco}

We recall the setting of the ``classical'' theory of non-commutative
localizations of rings (see \cite{cohn}).
The authors apologize for not being able to mention all significant
contributions to this vast subject.

 \subsubsection{On non-commutative localizations (a la Cohn)}\label{sncloc}
 
One considers a set $S$ of matrices over an (associative unital) ring $R$ and
looks for an initial object in the category of ring homomorphisms $R\to R'$
such that all elements of $S$ become invertible over $R'$ (usually only square
matrices were considered; yet non-square ones could also become invertible as
the morphisms between the corresponding free modules).
More generally, instead of matrices one can consider morphisms $P_i\to Q_i$ of
finitely generated projective  $R$-modules and tensor them by $R'$ (see
\cite{scho}). Here we prefer to consider right $R$-modules (and tensor them by
$R'$ from the right).

Denote by $\au$ the category of finitely generated free (resp. projective) 
modules over $R$.
It is easily seen that the problem of finding the ring $R'$ is ``equivalent''
to the calculation of $\au[S\ob]_{add}$. Indeed, denote $R$ considered as a 
module over itself by $\rr$ (so, $R$ acts on it from the left); then any object
of $\au$ is isomorphic to the direct sum of a finite number of copies of $\rr$
(resp.  to a retract of such a direct sum). It follows that the image of $\rr$
in $\au[S\ob]_{add}$ also is a ``generator'',  i.e.,  the objects of
$\au[S\ob]_{add}$  are exactly the direct sums of a finite number of copies
of $\rr$ (resp. are retracts of such direct sums). Hence the 
localization $R'$ of $R$ by $S$ (as mentioned above) is naturally isomorphic to
$\enom_{\au[S\ob]_{add}}\rr$.

For any additive category $\au$ 
consider
the rings $R_{I}=\enom_{\au}(\bigoplus_{A\in I} A)$, where $I$ runs through all
finite subsets of $\obj \au$. If $\au$ is Karoubian, this yields its
presentation as the projective limit of the categories of finitely generated
projective (right) modules over $R_{I}$.
So, it is no surprise that our ``explicit'' description of $\au[S\ob]_{add}$ is
closely related to the (equivalent) descriptions of the 
 localization of $R$ by $S$ given in \cite{gera} and \cite{malc} (whereas the results of the two 
  papers cited are easily seen to be equivalent).

 Moreover, they can probably be deduced from the results of ibid. 
 (via passing to the limit and possibly invoking some methods from
 \cite{scho}; the latter includes considering non-square matrices) in the case of a Karoubian $\au$. The extension to the case of a
 non-Karoubian category could be more difficult.

\subsubsection{The relation of our results with those of Malcolmson}

First, we show that our condition for equality of morphisms in $\au[S\ob]_{add}$
(i.e., Proposition \ref{psum}(III.2b) can be rewritten in a form
similar to the one in \cite{malc} (see the top of page 119).
This means: 
 $f_1=g_1\circ s_1\ob \circ i_1 $ equals $f_2= g_2\circ  s_2\ob \circ  i_2$
(for $g_j,s_j,i_j$ as in 
ibid.) if and only if there exist
$E_1,E_2,R_1,R_2,E\in\obj \au$ and morphisms
$L\in \au(E_1, R_1), M\in \au(E_2, R_2), Q \in \au(C'_1 \oplus C'_2 \oplus E_1
\oplus E_2, E), P \in \au(E, C_1 \oplus C_2 \oplus R_1 \oplus R_2) \in S,$ $u
\in \au(E, A'),v \in \au(A, E),X \in \au(E_1, A'),Y \in \au(A, R_2)$ such that
the following equality is fulfilled:
\begin{equation}\label{emalc}
\begin{pmatrix}
s_1 & 0   & 0 & 0 & i_1 \\
0   & s_2 & 0 & 0 &-i_2 \\
0   & 0   & L & 0 & 0   \\
0   & 0   & 0 & M & Y   \\
g_1 & g_2 & X & 0 & 0
\end{pmatrix} =
\begin{pmatrix}
P \\
u
\end{pmatrix} \circ 
\begin{pmatrix}
Q & v
\end{pmatrix}\end{equation}


One can mimic the
long calculations of 
 \cite{malc} and \cite{scho} (see also \cite{forn}) 
to show that the condition above defines an equivalence relation on the sets of
triples $(g,s,i),$ for $g,s,i$ as in Proposition \ref{psum}. 
 Moreover,
 we can define the additive category $\au'$ with $\obj \au' = \obj \au$ and
 morphisms defined by the triples mentioned modulo this equivalence relation,
 their compositions and addition defined via the relations of parts I-III.1 of ibid.
Hence
there is a functor $M:\au \to \au'$ sending $f \in \mo(\au)$ to $(f, \id, \id)$;
it sends elements of $S$ into isomorphisms. So, there is a functor $\au[S\ob]
\to \au'$.


Now we construct the inverse functor $I:\au' \to \au[S\ob].$ We define it  on
objects in the obvious way.
To define it on morphisms consider the map from $\mo \au'$ to $\mo
\au[S\ob]_{add}$ sending $(g,s,i)$ to $g \circ s\ob \circ i$.
Note that it is well-defined. Indeed, let
if $(g_1, s_1, i_1)$ and $(g_2, s_2, i_2)$ be equivalent  with respect to
Malcolmson's relation, i.e., suppose 
we have  $L,M,Q,P,u,v,x,y$ as in
Malcolmson's formula.
In order to prove that $g_1 \circ s_1\ob \circ i_1=g_2 \circ s_2\ob \circ i_2$,
we 
 apply Proposition \ref{psum}(III.2b). Consider the factorization of
 $\begin{pmatrix} 0 & g_1 & -g_2\end{pmatrix}$ 
through the complex $T'$:
\begin{equation}
\begin{CD}
@. A \oplus C'_1 \oplus C'_2 @>{\begin{pmatrix} i_1 & s_1 & 0 \\ i_2 & 0 &
s_2 \end{pmatrix}}>> C_1 \oplus C_2\\
@. @VV{\begin{pmatrix} v & Q \circ \begin{pmatrix} in_{C'_1} &
-in_{C'_2}\end{pmatrix} \end{pmatrix}}V @VV{\begin{pmatrix} in_{C_1}
& -in_{C_2} \end{pmatrix}}V\\
E_2@>{Q \circ in_{E_2}}>>E @>{pr_{C_1 \oplus C_2 \oplus R_1} \circ P}>> C_1
\oplus C_2 \oplus R_1 \\ @VV{}V @VV{u
}V @.\\
0 @>{}>> A' @.
\end{CD}
\end{equation}
(see the end of \S\ref{snotata} for the notation).

Note that $T'$
 belongs to $\obj \du,$ 
 since in  $\cu/\du$ it is isomorphic to the contractible complex $E_2
 \stackrel{in_{E_2}}\to C'_1 \oplus C'_2 \oplus E_1 \oplus E_2
 \stackrel{pr_{C'_1 \oplus C'_2 \oplus E_1}}\to C'_1 \oplus C'_2 \oplus E_1$
 via the obvious isomorphism
\begin{equation}
\begin{CD}
E_2 @>{in_{E_2}}>> C'_1 \oplus C'_2 \oplus E_1 \oplus E_2 @>{pr_{C'_1 \oplus
C'_2 \oplus E_1}}>> C'_1 \oplus C'_2 \oplus E_1\\
@VV{id_{E_2}}V @VV{Q}V @VV{s_1 \oplus s_2 \oplus L}V\\ 
E_2@>{Q \circ in_{E_2}}>>E @>{pr_{C'_1 \oplus C'_2 \oplus R_1} \circ P}>> C_1
\oplus C_2 \oplus R_1
\end{CD}
\end{equation}

Our map on morphisms preserves composition and addition since our composition
and addition are obviously compatible with Malcolmson's ones.
So $I$ is an additive functor such that $I \circ M = Q$, where $Q:\au \to
\au[S\ob]_{add}$ is the localization functor. It is unique since it is defined
on products of morphisms coming from $\au$. So, $\au'$ coincides with
$\au[S\ob]_{add}$; hence Malcolmson's formula can be used for comparing
morphisms in the localized category indeed.


\begin{rema}\label{cnc}

1. 
We can modify the construction of localization given in Proposition \ref{psum} as follows.
We may assume that every triple in loc. cit. has the form $(f,s,i),$
where $f,i \in \mo \au$ and $s \in \hat S$ (instead of $\co(s) \in D$), where
$\hat S$ is the class of lower triangular matrices with elements of $S$ and
identity morphisms on the diagonal.

Indeed, it suffices to verify that every triple $(f,s,i)$ with $\co(s) \in D$ is
equivalent to a triple $(f',s',i')$ with $s' \in \hat S.$

By Proposition \ref{trmatrix}, if $\co(s) \in D$ then there exists a homotopy
equivalence from $\co(s')$ to $\co(s),$ where $s' \in \hat S$.
Now, in the proof of Proposition \ref{surjww} we may take $T=\co(A \to
\co(s'))[-1],$ where $s' \in \hat S.$ So, $s = \begin{pmatrix}d^0 \\
y^0\end{pmatrix} \in \au(T^0, T^1 \oplus A)$ belongs to $\hat S$ because
its cone is a cone of morphism of complexes $\co(id_A)[-1] \to \co(s'),$ i.e.,
$s$ is a lower 
triangular matrix with $id_A$ and $s'$ on the diagonal.

2. Now we show that our results imply the main statements of \cite{malc}.

As explained in the 
section \ref{sncloc}, Proposition \ref{psum} 
can be used  to obtain a complete 
 description of non-commutative localizations of rings. Moreover, 
 as we have shown above, 
 we can use the condition (\ref{emalc}) instead of the last of equivalent
 conditions in part III.2b of ibid. Now we check that this combination yields
 exactly the description of non-commutative localizations given in \cite{malc}.

We have: 
the non-commutative localization of a ring $R$ with respect to a set of 
matrices $S$ over $R$ is naturally isomorphic to $R' =
\enom_{\fgf(R)[S\ob]_{add}}(\rr)$, where $\fgf(R)$ is the category of finitely
generated right free modules over $R$.
Denote by $\hat S$ the set of lower triangular 
matrices with elements of $S$
on diagonal.
Proposition \ref{psum} and part 1 of this remark yield: $R'$ can be
described as the set of triples $(g,s,i)$ modulo the equivalence relation
(\ref{emalc}), where $s \in \hat S$ 
 is of size $n \times m$ for some $n,m>0$, $i \in \fgf(R)(\rr,\rr^m)$ and $g
 \in \fgf(R)(\rr^n,\rr)$, i.e., $i$ is a column and $g$ is a row over $R$ of size
 $n$. The operations on this set are defined using Proposition \ref{psum}.

Thus one can easily see: we obtain exactly the construction of the
non-commutative localization of $R$ described in \cite{malc}.

Certainly, one can also use the last of equivalent conditions from Proposition \ref{psum}(III.2b) instead of
(\ref{emalc}) here.

\end{rema}

A major advantage of our methods is that we define $\au[S\ob]_{add}$ as a full
subcategory of $K^b(\au)/\du$. So, we do not have to verify that our
presentation of morphisms and the relations on them (see Proposition
\ref{psum}) does yield an additive category indeed (in contrast with the
arguments of \cite{gera} and \cite{malc}). Besides, the consideration of
$K^b(\au)/\du$ explains the origin of the relations obtained (their analogues
in the papers cited look quite ad hoc). In particular, the {\it
multiplicativity} condition for $M$  in \cite{malc} (and also in \cite{gera})
corresponds to the extension-closedness of our $D$ (cf. (\ref{etrmor})).

Also, ibid. contains no information about how (\ref{emalc}) was obtained or
how it can be understood (in contrast to Proposition \ref{psum}(III.2)).
Besides,
though both our and Malcolmson's conditions for the equality of morphisms in
$\au[S\ob]_{add}$
are hard to verify (in general), there are some advantages of using our one. 
Our formula uses only 6 free variables, and Malcolmson's formula uses 7 ones. %

Lastly, one can easily note that Theorem \ref{taddlocu} 
yields a mighty tool
for computations in $\au[S\ob]_{add}$ (we thoroughly demonstrated this in the
previous section; cf. Proposition \ref{paddlocu}).


We will relate our methods and results with the ``alternative triangulated
approach to additive localizations'' of \cite{neeran} and \cite{dwy} in
\S\ref{sadja} below.
Here we only note that non-commutative localizations  seem to be really nicely
related to weight structures (that we will treat in the next section).
Certainly, one of the reasons to say this is Theorem \ref{twloc} below. We
would also like to say that though Proposition \ref{ploc} (that is crucial for
this paper) was not formulated in the terms of weight structures, it is
certainly closely related to them. In particular, part 1 of the Proposition
is a modification of the axiom \ref{dwstr}(I.iv), whereas 
the formulations of parts 2--4 are motivated by Remark 
\ref{rcompu}(1).



\section{On weight structures in localizations}\label{sws}

We start this section by recalling the definition and basic properties of
weight structures. 
They 
enable us to generalize the results of the previous sections and prove for
$S\subset \mo \hw$: the localized category $\cu/\du$ is endowed with a weight
structure whose heart is the Karoubi-closure of $\hw[S\ob]_{add}$ in $\cu/\du$.
The proofs are minor modifications of the arguments above (that correspond to
the case $\cu=K^b(\au)$, $w$ is the 'stupid weight structure'). 
Next, we  recall the notion of adjacent structures; we prove (in the case of a
certain compactly generated $\ddu\subset \cu$) that $\cu/\ddu$ possesses a
$t$-structure {\it adjacent} to the corresponding $w_{\cu/\ddu}$, and calculate
its heart.

\subsection{A reminder on weight structures}

\begin{defi}\label{dwstr}

I A pair of subclasses $\cu_{w\le 0},\cu_{w\ge 0}\subset\obj \cu$ 
will be said to define a weight
structure $w$ for a triangulated category  $\cu$ if 
they  satisfy the following conditions:

(i) $\cu_{w\ge 0},\cu_{w\le 0}$ are 
Karoubi-closed in $\cu$
(i.e., contain all $\cu$-retracts of their objects).

(ii) {\bf Semi-invariance with respect to translations.}

$\cu_{w\le 0}\subset \cu_{w\le 0}[1]$, $\cu_{w\ge 0}[1]\subset
\cu_{w\ge 0}$.

(iii) {\bf Orthogonality.}

$\cu_{w\le 0}\perp \cu_{w\ge 0}[1]$.

(iv) {\bf Weight decompositions}.

 For any $M\in\obj \cu$ there
exists a distinguished triangle
\begin{equation}\label{wd}
X\to M\to Y
{\to} X[1]
\end{equation} 
such that $X\in \cu_{w\le 0},\  Y\in \cu_{w\ge 0}[1]$.

II The category $\hw\subset \cu$ whose objects are
$\cu_{w=0}=\cu_{w\ge 0}\cap \cu_{w\le 0}$ and morphisms are $\hw(Z,T)=\cu(Z,T)$ for
$Z,T\in \cu_{w=0}$,
 will be called the {\it heart} of 
$w$.


III $\cu_{w\ge i}$ (resp. $\cu_{w\le i}$, resp.
$\cu_{w= i}$) will denote $\cu_{w\ge
0}[i]$ (resp. $\cu_{w\le 0}[i]$, resp. $\cu_{w= 0}[i]$).

IV We denote $\cu_{w\ge i}\cap \cu_{w\le j}$
by $\cu_{[i,j]}$ (so it equals $\ns$ for $i>j$).

$\cu^b\subset \cu$ will be the category whose object class is $\cup_{i,j\in \z}\cu_{[i,j]}$.

V We will  say that $(\cu,w)$ is {\it  bounded}  if $\cu_b=\cu$ (i.e.,
$\cup_{i\in \z} \cu_{w\le i}=\obj \cu=\cup_{i\in \z} \cu_{w\ge i}$)s.

VI Let $\cu$ and $\cu'$ 
be triangulated categories endowed with
weight structures $w$ and
 $w'$, respectively; let $F:\cu\to \cu'$ be an exact functor.

$F$ will be called {\it left weight-exact} 
(with respect to $w,w'$) if it maps
$\cu_{w\le 0}$ into $\cu'_{w'\le 0}$; it will be called {\it right weight-exact} if it
maps $\cu_{w\ge 0}$ into $\cu'_{w'\ge 0}$. $F$ is called {\it weight-exact}
if it is both left 
and right weight-exact.

VII Let $\hu$ be a 
full subcategory of a triangulated category $\cu$.

We will say that $\hu$ is {\it negative} if
 $\obj \hu\perp (\cup_{i>0}\obj (\hu[i]))$.

VIII The {\it small envelope} of an additive category $\au$
is the  category $\au'\subset \kar(\au)$ whose objects are $(X,p)$ for $X\in\obj \au$ and
$p\in \au(X,X)$ such that  $ p^2=p$ and there exist $Y\in \obj \au$ and
$q\in \au(X,Y)$, $s\in \au(Y,X)$ satisfying $sq=1-p$, $qs=\id_Y$. So, the  morphism groups in $\au'$ are given by the  
formula (\ref{mthen}). 

\end{defi}

\begin{rema}\label{rstws}

1. A  simple (and yet very useful for us) example of a weight structure comes from the stupid
filtration on 
$K^b(\au)$ (or for $K(\au)$) for an arbitrary additive category
 $\au$. 
In this case
$K^b(\au)_{w\le 0}$ (resp. $K^b(\au)_{w\ge 0}$) will be the class of complexes that are
homotopy equivalent to complexes
 concentrated in degrees $\ge 0$ (resp. $\le 0$).  The heart of this weight structure 
is the Karoubi-closure  of $\au$
 in 
 $K^b(\au)$. Note that we have $K^b(\au)^{[-j,-i]}\subset K^b(\au)_{[i,j]}$ for any $i,j\in \z$; we have an equality if $\au$ is Karoubian, but in general $K^b(\au)_{[0,0]}$ is the small envelope of $\au$ (so it is not necessarily equivalent to $\au$; cf. Proposition \ref{pbw}(\ref{igen})).

3. 
A weight decomposition (of any $M\in \obj\cu$) is (almost) never canonical.
 
4. In the current paper we use the ``homological convention'' for weight structures; 
it was previously used in \cite{hebpo}, \cite{wild},   and  \cite{btrans}, whereas in 
\cite{bws} and in \cite{bger} the ``cohomological convention'' was used. In the latter convention 
the roles of $\cu_{w\le 0}$ and $\cu_{w\ge 0}$ are interchanged, i.e., one
considers   $\cu^{w\le 0}=\cu_{w\ge 0}$ and $\cu^{w\ge 0}=\cu_{w\le 0}$. So,  a
complex $X\in \obj K(\au)$ whose only non-zero term is the fifth one (i.e.,
$X^5\neq 0$) has weight $-5$ in the homological convention, and has weight $5$
in the cohomological convention. Thus the conventions differ by ``signs of
weights''; 
 $K(\au)_{[i,j]}$ is the class of retracts of complexes concentrated in degrees
 $[-j,-i]$. 
 
 We also recall: in \cite{konk}
D. Pauksztello
introduced weight structures independently (and called them
co-t-structures). 

 5. The orthogonality axiom in Definition \ref{dwstr}(I) immediately yields that $\hw$ is negative in $\cu$.
 We will invoke a certain converse to this statement 
 below.

\end{rema}

Now we recall those properties of weight structures that
will be needed below (and can be easily formulated).

\begin{pr} \label{pbw}
Let $\cu$ be a triangulated category; we will assume 
that $w$ is a fixed 
weight structure on it  everywhere except assertion  \ref{igen}.

\begin{enumerate}

\item\label{idual} 
For $C_1=C_{w\le 0}$ and $C_2=C_{w\ge 0}$, the classes
$(C_2^{op}, C_1^{op})$ define a weight structure on $\cu^{op}$.

\item\label{iext} 
 $\cu_{w\le 0}$, $\cu_{w\ge 0}$, and $\cu_{w=0}$
are extension-stable. 

 \item\label{iwd0} For any weight decomposition of $M\in \cu_{w\ge 0}$ (see
(\ref{wd})) we have $X\in \cu_{w=0}$.

\item\label{ibougen}
The category $\cu^b$ is a triangulated subcategory of $\cu$;
$w$ restricts to a bounded weight structure on $\cu^b$ (i.e., we consider the classes $\cu_{w\le 0}\cap \obj\cu^b$ and $\cu_{w\ge 0}\cap \obj\cu^b$) 
whose heart equals $\hw$.

\item\label{igenw0}
If  $w$  is bounded, then  $\cu$ is generated by $\cu_{w=0}$ (see \S\ref{snotata}). 

\item\label{i01}  $\cu_{[0,1]}$ consists exactly of cones of morphisms in $\hw$.

\item \label{igen}
Assume that  $\hu\subset  \cu$ is negative, additive, and generates $\cu$. 
Then
there exists a unique weight structure $w$ on $\cu$  such that $\hu\subset \hw$.
It is bounded; its heart is equivalent to the small envelope of $\hu$. 

Moreover, $\cu_{w\le 0}$ is the
   smallest Karoubi-closed extension-stable subclass of $\obj \cu$ containing 
   $\cup_{i\le 0}\obj \hu[i]$;
$\cu_{w\ge 0}$ is the
   smallest Karoubi-closed extension-stable subclass of $\obj \cu$ containing 
   $\cup_{i\ge 0} \obj \hu[i]$.

\item\label{idemp} Assume that $\cu$ is endowed with a bounded weight structure
$w$. Then there exists a unique weight structure $w'$ on the Karoubization
$\cu'$ of $\cu$ such that the embedding $\cu\to \cu'$ is weight-exact. It is
bounded; its heart is the Karoubization of $\hw$.

\item\label{iwwc} 
A certain additive {\it weak weight complex} functor
$t:\cu\to K_\w (\hw)$ is defined, where $K_\w(\hw)$ is a certain {\it weak
homotopy category of complexes}. $t$ and $K_\w(-)$ satisfy the following
properties.

(i) For any additive category $\au$  we have: there is a natural conservative additive
functor $p=p_{\au}:K(\au)\to K_\w(\au)$ that is bijective on objects and
surjective on morphisms. 
It commutes with $[i]$ for any $i\in\z$; we
have $K(\au)(X,Y)\cong K_w(\au)(p(X),p(Y))$ for any $X\in K(\au)^{\le 0}$, $Y\in
K(\au)^{\ge 0}$.

(ii) There also exists a certain bounded analogue $K^b_\w(-)\subset K_\w(-)$.
If $w$ is bounded then $t$ is conservative and  can be factored through
$K^b_\w(\hw)$.

(iii)  If $X[-1]\to Y\stackrel{f}{\to} Z\to X$ is a distinguished triangle in
$\cu$ then there exists a  lift of $t(f)$ to a $t'(f)\in K(\au)(t(Y),t(Z))$
such that  $t(X)\cong \co(t'(f))$.

(iv) Let $S=\{S^{-1}_i\stackrel{s_i}\to S^0_i,\ i\in I\}\subset \mo \hw$; let
$B\subset \cu_{[0,1]}$ be the set of cones of elements of $S$ (see assertion
\ref{i01}), $D$ is the extension-closure of $B$. Then $D\subset \cu_{[0,1]}$;
$t(D)$ consists of cones of morphisms given by  lower triangular matrices with
elements of $S$ 
on the diagonal; see (\ref{etrmor}).

\item\label{iwe} Assume that $w$ be bounded and that $w'$ is a weight structure
for a triangulated category $\cu'$. Then an exact functor $F:\cu\to \cu'$ is
weight-exact if and only if $F(\cu_{w=0})\subset \cu_{w'=0}$.

\end{enumerate}
\end{pr}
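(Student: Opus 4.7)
The plan is to split the equivalence into the two implications. The ``only if'' direction is immediate: by definition $\cu_{w=0}=\cu_{w\le 0}\cap \cu_{w\ge 0}$ and $\cu'_{w'=0}=\cu'_{w'\le 0}\cap \cu'_{w'\ge 0}$, so any weight-exact $F$ sends the intersection into the intersection. For the converse I would concentrate on the inclusion $F(\cu_{w\le 0})\subset \cu'_{w'\le 0}$; the analogous statement for $\cu_{w\ge 0}$ follows by duality (Proposition \ref{pbw}(\ref{idual})).

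Assume $F(\cu_{w=0})\subset \cu'_{w'=0}$. Since $F$ commutes with the shift (it is exact), the hypothesis upgrades to $F(\cu_{w=i})\subset \cu'_{w'=i}$ for all $i\in \z$; together with the semi-invariance axiom \ref{dwstr}(I.ii) this yields $F\bigl(\bigcup_{i\le 0}\obj(\hw[i])\bigr)\subset \cu'_{w'\le 0}$. Next I would introduce the preimage class
$$\mathcal{C}^-:=\{X\in \obj \cu:\ F(X)\in \cu'_{w'\le 0}\}.$$
This class contains $\bigcup_{i\le 0}\obj(\hw[i])$ by the observation just made. It is extension-stable because $F$ sends distinguished triangles to distinguished triangles and $\cu'_{w'\le 0}$ is extension-stable by Proposition \ref{pbw}(\ref{iext}). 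It is Karoubi-closed in $\cu$ because $F$ preserves retracts and $\cu'_{w'\le 0}$ is Karoubi-closed by axiom \ref{dwstr}(I.i).

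The crucial step is then to invoke the ``moreover'' clause of Proposition \ref{pbw}(\ref{igen}) with $\hu=\hw$. Its hypotheses hold: $\hw$ is additive, it is negative by Remark \ref{rstws}(5), and it generates $\cu$ since $w$ is bounded (Proposition \ref{pbw}(\ref{igenw0})). That clause yields that $\cu_{w\le 0}$ is the smallest Karoubi-closed extension-stable subclass of $\obj \cu$ containing $\bigcup_{i\le 0}\obj(\hw[i])$, so by minimality $\cu_{w\le 0}\subset \mathcal{C}^-$, which is the desired inclusion. I do not anticipate a serious obstacle: the entire content of the statement is that boundedness of $w$ permits describing $\cu_{w\le 0}$ from $\hw$ by Karoubi-closure and iterated extensions, and both operations are respected by any exact functor taking $\hw$ into $\cu'_{w'=0}$.
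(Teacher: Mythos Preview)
Your argument is correct and is precisely the elaboration the paper intends: it proves assertion~\ref{iwe} by noting that, via the ``moreover'' clause of assertion~\ref{igen} (applicable since $\hw$ is negative, additive, and generates $\cu$ by assertion~\ref{igenw0}), the class $\cu_{w\le 0}$ is the smallest Karoubi-closed extension-stable class containing $\bigcup_{i\le 0}\hw[i]$, and these closure operations are preserved by any exact functor sending $\hw$ into $\cu'_{w'=0}$. The paper's own proof is the single sentence ``assertion~\ref{iwe} is an easy consequence of assertion~\ref{igen},'' so your approach coincides with it.
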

\begin{proof}


All of these statements except the two last ones 
 can be found in \cite{bws} (pay attention to Remark
\ref{rstws}(4)!); see Remark 1.1.2(1), Proposition 1.3.3(3,6),  Proposition
1.3.6(1,2), Corollary 1.5.7, Proposition 1.5.6(2),  Theorem 4.3.2(II),  and
Proposition 5.2.2 of ibid., respectively.

The remaining assertions also easily follow from the results of ibid. 
 Part (i) of assertion \ref{iwwc} is immediate from the definition of $K_w(-)$
 (this is Definition 3.1.6 of ibid.).
Part (ii) follows from Theorem 3.3.1(I,V) of ibid. 
Part (iii) 
 is also an easy consequence of the {\it weak
exactness} of $t$ (see loc. cit.);
it implies part (iv) immediately (cf. Proposition \ref{trmatrix}).

Lastly, assertion \ref{iwe} is an easy consequence of assertion \ref{igen}.

\end{proof}

\subsection{The relation of weight structures with ``additive localizations''}

In this subsection we assume that $w$ is a weight structure on a triangulated
category $\cu$ and $\du \subset \cu$ is a full triangulated subcategory
generated (see \S\ref{snotata}) by 
 a set of objects $B \subset \cu_{[0,1]}$. 

We will need a generalization of Proposition \ref{surjww} to this setting. 
Denote by $\hu$ the full subcategory of $\cu/\du$ whose objects are
$\cu_{w=0}$.

\begin{pr}\label{surj}
Let $A,A'$ be objects of $\hu$.

Then any $f\in (\cu/\du)(A,A')$ can be presented as the composition $g \circ
s^{-1} \circ i,$ where $A'', T \in \obj
\hu$, $g\in \hu(A'', A')$, $ s\in \hu(A'', A \oplus T)$,  $i:A \to A \oplus T$ is the canonical coretraction, 
whereas 
 $\co(s) \in \obj \du.$

\end{pr}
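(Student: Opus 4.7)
The plan is to generalize the proof of Proposition \ref{surjww}, with the weight structure $w$ on $\cu$ playing the role of the stupid filtration on $K^b(\au)$. First, apply Proposition \ref{ploc}(2) with $n = 0$ to $A, A' \in \cu_{w=0}$. Its hypotheses $A \in {}^{\perp}(B_{\geq 1})$ and $A' \in (B_{\leq -2})^{\perp}$ follow from the orthogonality axiom of $w$: since $B \subset \cu_{[0,1]}$, we have $B_{\geq 1} \subset \cu_{w \geq 1}$ and $B_{\leq -2} \subset \cu_{w \leq -1}$. This yields $f = h \circ q^{-1}$ with $q \in \cu(Z, A)$, $h \in \cu(Z, A')$, and $E := \co(q) \in B_{[0,0]} = D \subset \cu_{[0,1]}$; let $Z \xrightarrow{q} A \xrightarrow{a} E \xrightarrow{b} Z[1]$ be the corresponding distinguished triangle.

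Since $A, E[-1] \in \cu_{[-1, 0]}$ and this class is extension-stable (Proposition \ref{pbw}(\ref{iext})), the rotated triangle $E[-1] \to Z \to A$ gives $Z \in \cu_{[-1, 0]}$. Applying axiom (I.iv) to $Z[1] \in \cu_{w \geq 0}$ and shifting back produces a distinguished triangle $X \to Z \xrightarrow{p} A'' \xrightarrow{s_T} T$ with $X \in \cu_{w \leq -1}$, $A'' \in \cu_{w \geq 0}$, and $T := X[1]$. Proposition \ref{pbw}(\ref{iwd0}) applied to $Z[1]$ ensures $X \in \cu_{w = -1}$, so $T \in \obj\hu$; extension-stability forces $A'' \in \cu_{w = 0} = \obj\hu$. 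By orthogonality, $\cu(X, A) = \cu(X, A') = 0$, so both $q$ and $h$ factor uniquely through $p$: write $q = s_A \circ p$ and $h = g \circ p$ for $s_A \in \hu(A'', A)$ and $g \in \hu(A'', A')$.

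Now set $s := \binom{s_A}{s_T}: A'' \to A \oplus T$; then $s \circ p = \binom{q}{0} = i \circ q$, where $i: A \to A \oplus T$ is the canonical coretraction. It remains to verify $\co(s) \in \obj\du$. Since $a \circ s_A \circ p = a \circ q = 0$ and $\cu(Z[1], E) = 0$ by orthogonality, $a \circ s_A$ factors uniquely as $\lambda \circ s_T$ for a unique $\lambda \in \cu(T, E)$. The $3 \times 3$ lemma applied to the morphism of distinguished triangles $(X \to Z \to A'' \to T) \to (E[-1] \to Z \to A \to E)$ with vertical arrows $\lambda[-1], \id_Z, s_A, \lambda$ yields $\co(s_A) \cong \co(\lambda)$. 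The octahedral axiom for $A'' \xrightarrow{s} A \oplus T \xrightarrow{pr_A} A$ (with composition $s_A$ and $\co(pr_A) \cong T[1]$) produces a distinguished triangle $\co(s) \to \co(s_A) \to T[1]$; comparing (via the above isomorphism) with the rotated $\lambda$-triangle $E \to \co(\lambda) \to T[1] \to E[1]$ and invoking the uniqueness of cones (TR3) identifies $\co(s) \cong E \in D \subset \obj\du$. Since $s$ and $q$ become invertible in $\cu/\du$, the identity $s \circ p = i \circ q$ yields $g \circ s^{-1} \circ i = g \circ p \circ q^{-1} = h \circ q^{-1} = f$, completing the construction.

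The main technical obstacle is the identification $\co(s) \cong E$: it requires threading together a $3 \times 3$ lemma, an octahedral argument, and the vanishing $\cu(Z[1], E) = 0$ to control the comparison morphisms. Tracking commutativity relations and signs through these diagrammatic arguments is the key bookkeeping challenge; the weight-pure choice of $A'' = Z_{w \geq 0}$ (dictated by the weight decomposition of $Z$) is what makes the construction mirror the two-term complex argument of Proposition \ref{surjww}, where $A''$ corresponded to $C^0$ and $T$ to $C^1$.
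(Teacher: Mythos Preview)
Your overall strategy matches the paper's exactly: apply Proposition~\ref{ploc}(2) with $n=0$, deduce $Z\in\cu_{[-1,0]}$, take a weight decomposition $T[-1]\to Z\xrightarrow{p}A''\to T$ with $A'',T\in\cu_{w=0}$, factor $q$ and $h$ through $p$, set $s=\binom{s_A}{s_T}$, and finish with $g\circ s^{-1}\circ i = g\circ p\circ q^{-1}=h\circ q^{-1}=f$. The only substantive step is identifying $\co(s)$ with $E=\co(q)$, and here you diverge from the paper.

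There is a genuine error in your argument at this point. You claim that $\cu(Z[1],E)=0$ ``by orthogonality'', but this is false in general: the orthogonality axiom gives $\cu_{w\le 0}\perp\cu_{w\ge 1}$, whereas $Z[1]\in\cu_{[0,1]}$ and $E\in\cu_{[0,1]}$, so neither containment applies. (Concretely, nothing prevents $Z[1]\cong E$, and then the group contains the identity.) Without this vanishing your $\lambda$ is not unique, and you have not checked that your particular $\lambda$ actually makes the \emph{left} square of the purported morphism of triangles commute; TR3 supplies \emph{some} fill-in, but not necessarily the one produced by your factorisation. Even if you patch this by taking $\lambda$ from TR3, your final step---matching the octahedral triangle $\co(s)\to\co(s_A)\to T[1]$ against the rotated $\lambda$-triangle $E\to\co(\lambda)\to T[1]$ via ``uniqueness of cones''---requires that the isomorphism $\co(s_A)\cong\co(\lambda)$ coming from the $3\times 3$ lemma intertwine the two maps to $T[1]$. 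You have not verified this, and it is not automatic.

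The paper avoids all of this with a single octahedron, projecting to $T$ rather than to $A$: the factorisation $pr_T\circ s=s_T$ has $\co(s_T)=Z[1]$ and $\co(pr_T)=A[1]$, so the octahedral axiom directly yields a distinguished triangle $\co(s)\to Z[1]\to A[1]$; the compatibility conditions of the octahedron then identify the map $Z\to A$ with $q$ (using $s\circ p=i\circ q$ and $s_T\circ p=0$), whence $\co(s)\cong\co(q)=E$. No $3\times 3$ lemma, no auxiliary $\lambda$, and no questionable orthogonality is needed.
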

\begin{proof}

By  Proposition 
\ref{paddlocu}(1), 
$f$ can be presented as the composition $x \circ y^{-1}$ for some $C \in \cu_{[0,1]}$, $x \in
\cu(C,A'), y \in \cu(C,A)$, such that $\co (y) \in \obj\du$, 
Choose a weight decomposition of $C$ (in $\cu$); we obtain a distinguished triangle
$C^1[-1] \stackrel{d'}\to C \stackrel{p}\to C^{0} \stackrel{d}\to C^{1},$ where
$C^{0}, C^{1} \in \obj \hu.$ 
Since $y \circ d' = 0$, 
$y$ factors through $p$. So,
there exists a morphism $y':C^0 \to A$ such that $y' \circ p = y$ Similarly,
there is a morphism $x':C^0 \to A'$ such that $x' \circ p = x$.

Now consider the composition $g \circ s^{-1} \circ i,$ where $i:A \to C^{1}
\oplus A$ is the canonical coretraction, $s:C^{0} \stackrel{\begin{pmatrix} d
\\ y' \end{pmatrix}}\to C^{1} \oplus A$, $g = x':C^{0} \to A'.$ Note that $s$
is  invertible in $\cu/\du$ indeed:
the octahedron diagram for the commutative triangle given by the equality
$\begin{pmatrix} id_{C^1} & 0 \end{pmatrix} \circ s = d$ yields a distinguished
triangle $C \stackrel{y}\to A \to \co (s)$; thus $\co(s)$ is isomorphic to
$\co(y).$

Next,  there is an equality $i \circ y = s \circ p:$
$$i \circ y = \begin{pmatrix} 0 \\ id_{A} \end{pmatrix} \circ y =
\begin{pmatrix} 0 \\ y \end{pmatrix} = \begin{pmatrix} d \circ p \\ y' \circ p
\end{pmatrix} = \begin{pmatrix} d \\ y' \end{pmatrix} \circ p = s \circ p.$$

Hence $g \circ s^{-1} \circ i = g \circ p \circ y^{-1} = x \circ y^{-1} = f$.

\end{proof}

Now we 
prove the main result of this section.

\begin{theo}\label{twloc}

The following statements are valid.  
 
 1. The Karoubi-closures of the sets $(\cu_{w\le 0},\cu_{w\ge 0})$ in $\cu/\du$
 yield a weight structure $w_{\cu/\du}$ for $\cu/\du$ (such that the
 localization functor is weight-exact).

2. 
$\hw_{\cu/\du}$ is the Karoubi-closure  of $\hu$ in $\cu/\du$.

3. If $w$ is bounded, then $w_{\cu/\du}$ also is; the heart of $w_{\cu/\du}$ is isomorphic to the small envelope of $\hu$.

4. Let $S$ denote 
some set of $\hw$-morphisms 
such that $B$ consists 
 of cones of elements of $S$ (see Proposition \ref{pbw}(\ref{i01})). 
Then $\hu$ is canonically isomorphic to  $\hw[S\ob]_{add}$. 

5. More generally, for an additive category $\au\subset \hw$ assume that $S\subset \mo(\au)$,
$\hw=\kar_{\cu}\au$, and $B=\co(S)$. Then there exists a unique weight structure $w_{\cu/\du}$
for ${\cu/\du}$ such that the localization functor is weight-exact and yields
an isomorphism of $\au[S\ob]_{add}$ with the full subcategory  $\hu'\subset
\hw_{\cu/\du}:\ \obj \hu'=\au$.

\end{theo}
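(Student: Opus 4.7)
The plan is to define $w_{\cu/\du}$ by taking Karoubi-closures of $l(\cu_{w\le 0})$ and $l(\cu_{w\ge 0})$ in $\cu/\du$ and to verify the four axioms of Definition \ref{dwstr}(I). Karoubi-closedness and semi-invariance under $[1]$ are immediate, and weight decompositions for any $M\in\obj(\cu/\du)=\obj\cu$ are obtained by applying $l$ to weight decompositions in $\cu$. The nontrivial axiom is orthogonality, and this is exactly where Proposition \ref{ploc}(4) enters: for $X\in\cu_{w\le 0}$ and $Y'=Y[1]\in\cu_{w\ge 1}$, the hypothesis $B\subset\cu_{[0,1]}$ together with the orthogonality of $w$ gives $X\in{}^\perp(B_{\ge 1})$ and $Y'\in(B_{\le -1})^\perp$, so Proposition \ref{ploc}(4) yields $\cu(X,Y')\twoheadrightarrow(\cu/\du)(X,Y')$; since $\cu(X,Y')=0$ by orthogonality of $w$, the target also vanishes, and orthogonality extends to the Karoubi-closures by the usual retract argument.

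For Part 2, the inclusion $\kar_{\cu/\du}(\hu)\subset\hw_{\cu/\du}$ is immediate. For the converse, given $Y\in\obj\cu$ with $l(Y)\in\hw_{\cu/\du}$, I would first pick a weight decomposition $X\to Y\to W$ of $Y$ with $X\in\cu_{w\le 0}$, $W\in\cu_{w\ge 1}$; the orthogonality $l(Y)\perp l(W)$ in $w_{\cu/\du}$ splits the image triangle and makes $l(Y)$ a retract of $l(X)$. Then I would take a second weight decomposition $X'\to X\to Y'$ with $X'\in\cu_{w\le-1}$ and $Y'\in\cu_{w=0}$, the latter by the dual of Proposition \ref{pbw}(\ref{iwd0}); applying $\homm(-,l(Y))$ and using $l(X')\perp l(Y)$ forces the retraction $l(X)\to l(Y)$ to factor through $l(Y')\in\hu$, so $l(Y)$ is a retract of $l(Y')$. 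Part 3 then follows from Proposition \ref{pbw}(\ref{igen}): boundedness of $w$ makes $w_{\cu/\du}$ bounded and, combined with Proposition \ref{pbw}(\ref{igenw0}), ensures that $\hu$ generates $\cu/\du$; $\hu$ is negative in $\cu/\du$ by the orthogonality already established, so the cited result both identifies the heart with the small envelope of $\hu$ and provides uniqueness.

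For Part 4, the restriction of $l$ to $\hw$ lands in $\hu$ and inverts $S$ (since $\co(S)\subset\obj\du$), so Theorem \ref{taddlocu} yields a canonical functor $\Phi:\hw[S\ob]_{add}\to\hu$ which is the identity on objects. Surjectivity of $\Phi$ on Hom sets follows from Proposition \ref{surj}: any $f\in(\cu/\du)(X,Y)$ with $X,Y\in\cu_{w=0}$ can be written as $g\circ s^{-1}\circ i$ with $g,s,i\in\mo\hw$ and $\co(s)\in\obj\du$; by Proposition \ref{pbw}(\ref{iwwc})(iv) any such $s$ is, up to isomorphism in $\hw$, of the triangular form (\ref{etrmor}), and by Proposition \ref{exts} together with Remark \ref{rcismo} this $s$ becomes invertible in $\hw[S\ob]_{add}$. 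Injectivity I would obtain by mimicking Proposition \ref{psum}(III.2b): an equality $g_1s_1^{-1}i_1=g_2s_2^{-1}i_2$ in $\hu\subset\cu/\du$ translates, via Proposition \ref{ploc}(3), into a factorization through an object of $\du$, and this factorization transports to the analogous one in $K^b(\hw)/\du'$. Part 5 is then deduced from Part 4: the inclusion $\au\subset\hw$ with $\hw=\kar_\cu\au$ means that the full subcategory $\hu'$ of $\hu$ on objects of $\au$ satisfies the universal property of Theorem \ref{taddlocu} applied to $(\au,S)$, so it is canonically isomorphic to $\au[S\ob]_{add}$; the uniqueness clause of Proposition \ref{pbw}(\ref{igen}) handles uniqueness of $w_{\cu/\du}$.

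The main obstacle is the injectivity half of Part 4. Proposition \ref{psum}(III.2b) was proved in the special case $\cu=K^b(\au)$, where the generators of $\du$ are cones sitting in stupid-filtration degrees $[-1,0]$ and one can manipulate explicit complex representatives; in the present setting one has only $B\subset\cu_{[0,1]}$ with no underlying complex structure, so the matrix-style factorization argument has to be recast abstractly using Proposition \ref{ploc}(3) as the weight-theoretic substitute for the needed factorization criterion. A secondary delicate point is Part 2: an object of $\cu$ whose localization lies in $\hw_{\cu/\du}$ need not itself lie in $\cu_{w=0}$, and the two-step weight decomposition argument, crucially using $l(X')\perp l(Y)$, is what produces the required lift into $\hu$.
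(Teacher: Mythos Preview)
Your treatment of Parts 1--3 matches the paper's argument essentially verbatim: orthogonality via Proposition \ref{ploc}(4), weight decompositions lifted from $\cu$, and the two-step weight decomposition for Part 2 are exactly what the paper does (the paper phrases the second step as ``the dual argument'').

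The real divergence is in Part 4, and here your approach has a gap that the paper circumvents by a different idea. You want to prove injectivity and surjectivity of $\Phi:\hw[S\ob]_{add}\to\hu$ separately and directly. For surjectivity you invoke Proposition \ref{pbw}(\ref{iwwc})(iv), but that statement concerns $t(D)$ in $K_\w(\hw)$, not $\hw$ itself: knowing that $t(\co(s))$ is isomorphic in the \emph{weak} homotopy category to a triangular cone does not give you an isomorphism in $\hw$ between $s$ and a triangular matrix, because $p:K(\hw)\to K_\w(\hw)$ is only conservative, not full on isomorphisms between objects of $K(\hw)^{[-1,0]}$. So the step ``$s$ is, up to isomorphism in $\hw$, of triangular form'' is not justified. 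For injectivity you correctly flag the difficulty but propose to ``transport'' factorizations to $K^b(\hw)/\du'$; this again presupposes a comparison functor $\cu\to K^b(\hw)$ that you have not constructed.

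The paper avoids both problems with a single stroke: it uses the weak weight complex functor $t:\cu\to K_\w(\hw)$ (Proposition \ref{pbw}(\ref{iwwc})) to show that the canonical functor $\hw\to\hw[S\ob]_{add}$ factors through $u:\hw\to\hu$, producing a functor $Q:\hu\to\hw[S\ob]_{add}$ in the reverse direction. The universal property of $\hw[S\ob]_{add}$ then forces $Q\circ\Phi=\id$, so $\Phi$ is injective for free. For surjectivity, any morphism in $\hu$ is $g\circ s^{-1}\circ i$ by Proposition \ref{surj}; since $s$ is invertible in $\hu$ and $Q$ is a functor, $Q(s)$ is invertible in $\hw[S\ob]_{add}$, which is exactly the missing step you tried to obtain from the triangular form. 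The weak weight complex is thus the substitute for the explicit complex manipulations available in $K^b(\au)$, and it is the one ingredient your proposal is missing.
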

\begin{proof}
1. We should verify that the Karoubi-closures of  $(\cu_{w\ge 0},\cu_{w\le 0})$ satisfy all the axioms of  weight structures. 

These Karoubi-closures  are obviously semi-invariant with respect to translations. 
The orthogonality axiom follows from Proposition \ref{ploc}(4) easily (cf. Remark \ref{rcompu}(1)); it suffices to note that $\cu_{w\le 0}{\perp}\cu_{w\ge 1}\supset B_{\ge 1}$, and $   B_{\le -1}\subset \cu_{\le 0} {\perp} \cu_{w\ge 1}$ (in $\cu$; by Definition \ref{dwstr}(I.iii)).
Lastly, any object of $\cu/\du$ possesses a weight decomposition that comes from $\cu$. 

2. 
$\cu_{w=0}\subset (\cu/\du)_{w_{\cu/\du}=0}$ by the definition of $w_{\cu/\du}$.

Now we prove that any object of $(\cu/\du)_{w_{\cu/\du}=0}$ is a retract of an
object of $\cu_{w=0}$ (in $\cu/\du$). Actually, this fact is valid for any weight-exact functor that is surjective on objects. 

We repeat an argument used in \S8.1 of \cite{bws}.
Let $Z$ belong to $(\cu/\du)_{w_{\cu/\du}=0}\subset \obj \cu$. 
Shifting a weight decomposition of $Z[1]$ in $\cu$ by $[-1]$ we obtain a distinguished triangle 
$T\to Z\to U$ with $T\in \cu_{w\le -1}$, $U\in \cu_{w\ge 0}$.
In $\cu/\du$ we have $T\in (\cu/\du)_{w_{\cu/\du}\le -1}$; hence
$T\perp Z$ and we obtain $U\cong_{\cu/\du} Z\bigoplus T[1]$. Therefore, in
$\cu/\du$ the object $Z$ is a  retract of $U$, whereas $U\in
((\cu/\du)_{w_{\cu/\du}=0})\cap \cu_{w\le 0}$.

 Now, applying the dual
 argument to $U$ (see Proposition \ref{pbw}(\ref{idual})), 
 we obtain: 
$U$ in $\cu/\du$ is a retract of
 some $W\in \obj \cu_{w=0}$ (here  $W\to U\to Y$ is a weight decomposition in $\cu$; we apply part \ref{iwd0} of loc. cit.).

3. Since the localization functor is weight-exact, $w_{\cu/\du}$ is bounded. Next, we have $\hu\subset \hw$; hence $\hu$ is negative (see Remark \ref{rstws}(5)). Proposition \ref{pbw}(\ref{iext}) yields that $w_{\cu/\du}$ is the weight structure corresponding to $\hu$ via assertion \ref{igen} of loc. cit. Hence the assertion mentioned yields the result.

4. We follow the scheme of the proof of Theorem \ref{taddlocu}. 
First, we
prove that any additive functor $F:\hw\to \au'$ that converts all elements of $S$
into invertible morphisms factors through the functor $u:\hw\to \hu$ (actually,
it suffices to verify this for $F$ being the localization functor $\hw\to\hw[S\ob]_{add}$).

We consider the composition $G=K_\w(F)\circ t:\cu\to K_\w(\hw[S\ob]_{add})$.
Obviously, it suffices to verify that $G$ factors through the localization
functor $\cu\to \cu/\du$. 
By the universal property of ``abstract'' localizations, it suffices to verify
for any distinguished triangle $X\to Y\stackrel{f}{\to} Z\to X[1]$ that $G(f)$
is an isomorphism if $X\in \obj \lan B\ra$.
By Proposition \ref{pbw}(\ref{iwwc}(iii)) 
 it suffices to verify
that $G(X)=0$ for such an $X$. Since $G$ is additive, it suffices to verify
that it kills all objects of $\du$. Applying loc. cit. again (repeatedly) we
reduce this to the fact that it kills all elements of $B$. The latter is
obvious since all elements of $B$ die in $K(\hw[S\ob]_{add})$,
 whereas morphism groups in  $K(\hw[S\ob]_{add})$ surject onto those in
 $K_\w(\hw[S\ob]_{add})$.

By Theorem \ref{taddlocu}, there exists a unique additive functor
$R:\hw[S\ob]_{add}\to \hu$ compatible with $u$ that is bijective on objects.
Since there also is a functor in the inverse direction (as we have just
proved), $R$ is injective on morphisms.

As is the proof of 
loc. cit., it remains to verify that there are no ``redundant'' morphisms in $\hu$. 
The latter fact  is immediate from Proposition \ref{surj}: every morphism in
$\hu$ can be represented as the composition $g \circ s^{-1} \circ i,$ where $s
\in S$, $s^{-1}$ comes from $\hw[S\ob]_{add}.$ So, the composition comes from
$\hw[S\ob]_{add}.$

5. The universal properties of 
the additive localization construction (see Theorem \ref{taddlocu}) and
of the Karoubization one yield a canonical 
sequence of functors $\au[S\ob]_{add}
\to \hw[S\ob]_{add}\to \kar(\au[S\ob]_{add})\to \kar(\hw[S\ob]_{add})$.
 Together with  the previous assertion it implies that 
 $\au[S\ob]_{add}$
is a full subcategory of $\hw[S\ob]_{add}$. Thus, $\au[S\ob]_{add}$ is a
negative subcategory of $\cu/\du$. Lastly, by Proposition \ref{pbw}(7) there is
only one weight structure whose heart contains $\au[S\ob]_{add}$. 

\end{proof}

\begin{rema}\label{r2purloc}
 
1. Part 4 of the theorem above is not a generalization of Theorem
\ref{taddlocu} (in contrast to part 5), since $\au$ is not necessarily equivalent to the heart of
the ``stupid'' weight structure (either for $K^b(\au)$ or for $K(\au)$; see Proposition
\ref{pbw}(\ref{igen}).

 2. For $\cu$ possessing a differential graded enhancement (``compatible with $w$'')  one can verify 
 the existence of $w_{\cu/\du}$ ``explicitly''; cf.  \S8.2 of \cite{mymot}.
 Yet it seems hard to compute $\hw_{\cu/\du}$ via calculations of this sort.
 
 3. Our theorem 
 extends the results of \S8.1 of \cite{bws}, where (essentially) the case
 $B\subset \cu_{w=0}$ was considered (see 
 Remark \ref{rintro} above).

 Note that our setting reduces to this one if all the elements of $S$ are
 retractions. 

\end{rema}

\subsection{The relation with adjacent \texorpdfstring{$t$}{t}-structures}\label{sadja} 

In this section we discuss the relation of our methods with the 
``triangulated'' approach to localizations used in \cite{dwy} and in
\cite{neeran}  (that is closely connected with $t$-structures).
First, we demonstrate that weight structures can be used for construction and
study of certain $t$-structures for a wide class of triangulated theories (and
their localizations). 
 Next, we explain the relation of the methods used in  ibid. with
our ones. %

The idea is to find a weight-exact embedding 
  of a 'small' triangulated category $\cu'$ into a certain 'big' $\cu$ (with a weight
  structure $w$) such that $w$ is dual to a certain $t$-structure (in a certain
  sense). Then the weight-exactness of the localization functor becomes
  equivalent to the $t$-exactness of its right adjoint.

For simplicity, 
till the end of the section we will assume that $\cu$ contains 
arbitrary small coproducts of its objects (below
for a category $C$ fulfilling this condition we will just say that $C$ is closed with respect to coproducts)
everywhere except 
assertions I and II of
Proposition \ref{padstr}.
Note yet that for our purposes it is also possible to
consider  coproducts for sets of objects of limited cardinality only; cf.
Theorem 4.3.2(III(i)) of \cite{bws} . In such a situation an $X\in \obj \cu$ is called
compact if the functor $\cu(X,-)$ commutes with all possible coproducts.

\subsubsection{On adjacent structures and localizations}
 

We will need some definitions and notation.

Let $\cu$ be a triangulated category closed with respect to  
coproducts, $B\subset \obj \cu$. Then we will call a 
subcategory
$\du\subset \cu$ the {\it localizing subcategory generated by} $B$ (or that $B$ generates $\du$ as a localizing subcategory) if $\du$ is the smallest full strict triangulated subcategory of $\cu$ containing $B$ and closed with respect to  
coproducts. Note that in this situation $\du$ is called compactly generated if all 
elements of $B$ are compact in it; yet we will not really need the latter definition below.

For a small additive category $\au$ 
 we denote by $\adfu(\au,\ab)$ the
category of additive functors from $\au$ to the category of abelian groups.

Recall that $t$-structures were defined (in \cite{bbd}) in terms of certain
subclasses $\cu^{t\le 0},\cu^{t\ge 0}\subset \obj \cu$. These classes of
objects should satisfy certain axioms that are somewhat similar to Definition
\ref{dwstr} (this is certainly not a pure coincidence; ibid. inspired the
writing both of \cite{bws} and of \cite{konk}). The heart $\hrt$ of $t$ is
defined similarly to $\hw$.

We will say that a $t$-structure $t$ (for $\cu$) is non-degenerate if
$\cap_{n\in \z}\cu^{t\le n}= \cap_{n\in \z}\cu^{t\ge n}=\ns$.

\begin{defi}\label{deadj} We say that a weight structure $w$ for $\cu$ is
{\it adjacent} to a $t$-structure $t$ for $\cu$ if and only if the class
$\cu_{w\ge 0}$ coincides with $\cu^{t\le 0}$.

\end{defi}

\begin{rema}
This definition  (along with Proposition \ref{padstr}(I,II))
 axiomatizes the ``duality'' between the category $\au$ of 
 projective $R$-modules and the category $\aau$ of all $R$-modules. The
 ``triangulated avatar'' of this duality is the fact that any object of
 $D(R)^{t\le 0}$ has a projective resolution ($t$ is the canonical
 $t$-structure for $\cu=D^-(R)$); so, the    ``stupid'' weight structure $w$
 coming from the isomorphism $\cu\cong K^-(\au)$  is adjacent to $t$. 
Note however: in this example we have $\hw\subset \hrt$; yet we do not have
such an inclusion for a general pair of adjacent $w,t$.
\end{rema}

We will  need the following properties of adjacent structures and compactly generated categories; most of them were proved in \S\S4.4-4.5 of \cite{bws} (yet pay attention to Remark \ref{rstws} above!).

\begin{pr}\label{padstr}

I Assume that $\cu$
is  endowed with a
weight structure $w$  and also with an adjacent
$t$-structure $t$ (we do not assume that 
$\cu$ is closed with respect to coproducts in this assertion).

1. Assume that $\cu$ is a small category. Denote by
$\cu(\hw,-)$ the functor $\hrt \to \adfu(\hw^{op}, \ab)$ that sends $N\in
\cu^{t=0}$ to the representable functor $M\mapsto \cu(M,N)\ (M\in \cu_{w=0}).$
It is an exact embedding of $\hrt$ into the abelian category $\adfu(\hw^{op}, \ab)$.

2. Assume that $t$ is non-degenerate. Then
$\cu^{t\le 0}=(\cup_{i<0}\cu_{w=i})^{\perp}$, 
$\cu^{t\ge 0}=(\cup_{i>0}\cu_{w=i})^{\perp}$.

II Assume that $\cu$
is  endowed with a
weight structure $w$  and  with an adjacent
$t$-structure $t$ (and not necessarily closed with respect to coproducts), whereas  a triangulated category $\eeu$
is endowed with adjacent
 $w_{\eeu}$ and $ t_{\eeu}$. Let $\pi:\cu\to \eeu$
 be an exact functor. Then the following statements are valid. 

1. $\pi$ is right weight-exact if and only if it is 
right  $t$-exact (i.e., if $\pi(\cu^{t\le 0})\subset \eeu^{t_{\eeu}\le 0}$).

2. Let $G:\eeu\to \cu$ be the right adjoint to $\pi$. Then $\pi$ is right
(resp. left) weight-exact with respect to $w$ and $w_{\eeu}$ if and only if $G$
is left (resp. right) 
$t$-exact with respect to $t_{\eeu}$ and $t$.

III Let $\cu'$ be a full triangulated subcategory of $\cu$ 
whose objects are compact; 
 let $B\subset \obj\cu'$
be 
a 
set (i.e., it is small); 
let $\hu\subset \cu'$ be a small additive category such that $\obj \hu$ 
generates $\cu$ as (its own) localizing subcategory (whereas $\cu$ is closed with respect to coproducts). Denote by $\ddu$ the localizing subcategory of $\cu$ generated by $B$.
 Then the following statements are valid. 

1. The Verdier localization category $\eeu=\cu/\ddu$ exists;
the localization
functor $\pi:\cu\to \eeu$ commutes with  coproducts, converts
compact objects into compact ones, and possesses a right adjoint $G$ that is a full
embedding functor. Besides, $\eeu$ is generated by $\pi(\obj \hu)$ as a localizing subcategory. 

2. $\pi$ induces a full embedding of $\eu=\cu'/\lan B\ra_{\cu'} $ into $ \eeu$.

3. Assume moreover that $\hu$ is negative (see Definition
\ref{dwstr}(VII)).
Then $\cu$ possesses a weight structure $w$ 
whose heart is equivalent to the Karoubization of the category of all
the coproducts of the objects of $\hu$, and also a non-degenerate $t$-structure
$t$ such that $w$ is adjacent to $t$.
Besides, $\hw$ is the Karoubization of the category of all ``formal'' (small)
coproducts of objects of $\hu$ (i.e., we have $\cu(\coprod_{l\in L}H_{l},\coprod_{j\in J}H_{j})
=\prod_{l\in L}(\bigoplus_{j\in J}\cu(H_{l},H_{j}))$;
here $H_{l}, H_j\in \obj \hu$, $L,J$ are index sets), whereas 
$\hrt\cong \adfu (\hu^{op},\ab)$  (via the functor $N\mapsto (H\in \obj
\hu\mapsto \cu(H,N))$).

\end{pr}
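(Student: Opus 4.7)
My plan is to reduce Parts I and II to the general machinery of adjacent structures developed in \S\S4.4--4.5 of \cite{bws}, and to handle Part III by combining compact generation arguments in the style of Neeman with the construction of weight structures from negative generating subcategories (Proposition \ref{pbw}(\ref{igen})).

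For Part I.1, I would verify exactness and faithfulness of $\cu(\hw,-)$ by direct calculation. A short exact sequence in $\hrt$ lifts to a distinguished triangle $A \to B \to C \to A[1]$ with all three terms in $\cu^{t=0}$; applying $\cu(M,-)$ for $M \in \cu_{w=0}$ yields a long exact sequence whose outer vanishings come from the adjacency equality $\cu_{w\ge 0} = \cu^{t\le 0}$ combined with Definition \ref{dwstr}(I.iii): the vanishing of $\cu(M[1], A)$ uses that $M \in \cu^{t\le 0}$ and $A[-1] \in \cu^{t\ge 1}$, while $\cu(M[-1], C) = 0$ uses $M \in \cu_{w\le 0}$ and $C[1] \in \cu^{t\le -1} = \cu_{w\ge 1}$. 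Faithfulness reduces to detecting a nonzero $N \in \cu^{t=0}$ by some $M \in \cu_{w=0}$; this follows from weight decompositions together with the orthogonality characterization of Part I.2, which is itself proved by induction on $t$-truncations using non-degeneracy as the base case.

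For Part II.1, $\pi(\cu_{w\ge 0}) \subset \eeu_{w_{\eeu}\ge 0}$ is tautologically equivalent to $\pi(\cu^{t\le 0}) \subset \eeu^{t_{\eeu}\le 0}$ via the two adjacency equalities. For Part II.2, I would invoke the adjunction: right weight-exactness of $\pi$ is equivalent to $\cu_{w\le 0} \perp G(\eeu_{w_{\eeu}\ge 1})$, which unpacks using orthogonality and adjacency to $G(\eeu^{t_{\eeu}\le -1}) \subset \cu^{t\le -1}$, i.e., left $t$-exactness of $G$; the other direction is formally dual.

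Part III is where the bulk of the work lies. For Part III.1, existence of $\eeu = \cu/\ddu$ as a legitimate triangulated category, preservation of coproducts and of compact objects by $\pi$, and existence of the fully faithful right adjoint $G$ all follow from Neeman's Brown representability theorem applied to compactly generated triangulated categories (see Chapters 8--9 of \cite{neebook}); the only input required is that $B$ is a set of compact objects and $\ddu$ is the localizing subcategory they generate. The generation of $\eeu$ by $\pi(\obj \hu)$ as a localizing subcategory is then immediate from the coproduct-preservation of $\pi$ together with the generation of $\cu$ by $\obj\hu$. Part III.2 is the Neeman--Thomason localization theorem applied to our hypothesis that $\obj\cu'$ consists of compact objects: compact objects in $\eeu$ are precisely the image of $\cu'/\lan B\ra_{\cu'}$. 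For Part III.3, I would construct $w$ by taking $\cu_{w\le 0}$ (respectively $\cu_{w\ge 0}$) as the Karoubi-closure of the coproduct-and-extension closure of $\cup_{i\le 0}\obj\hu[i]$ (respectively $\cup_{i\ge 0}\obj\hu[i]$); the weight structure axioms follow from negativity of $\hu$ and compactness, this being the compactly generated analogue of Proposition \ref{pbw}(\ref{igen}) worked out in Theorem 4.3.2(III) of \cite{bws}. The description of $\hw$ as the Karoubization of the category of formal coproducts of $\hu$-objects is then formal. The adjacent $t$-structure is built by setting $\cu^{t\le 0} := \cu_{w\ge 0}$ and $\cu^{t\ge 1} := (\cu_{w\ge 0})^{\perp}$, with Brown representability producing the $t$-truncations and non-degeneracy coming from compact generation by objects of $\hu$. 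The main obstacle is the identification $\hrt \cong \adfu(\hu^{op}, \ab)$: I would establish full faithfulness via Part I.1 (applied after restricting to the negative small subcategory whose coproducts generate $\hw$), and essential surjectivity by presenting an arbitrary $F \in \adfu(\hu^{op}, \ab)$ as the cokernel of a map between representable functors and lifting the corresponding $\hu$-morphism to a $\cu$-morphism whose $t$-zeroth cohomology represents $F$.
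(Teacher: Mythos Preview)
Your approach is essentially the same as the paper's: Parts I and II are referred to \S\S4.4--4.5 of \cite{bws} (Theorem 4.4.2(4) and Remark 4.4.6 respectively), and Part III is handled via Neeman's compactly generated machinery (\cite{neebook}, Chapters 8--9, Theorem 4.4.9) together with the ``negatively well-generated'' criterion of \cite{bws} (the paper cites Theorem 4.5.2 there rather than 4.3.2(III), but the content you describe is correct).

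One small logical wrinkle: you propose to deduce faithfulness in I.1 from the orthogonality characterization I.2, but I.2 carries the extra hypothesis that $t$ is non-degenerate, while I.1 does not. The paper runs the implication in the opposite direction (I.1 $\Rightarrow$ I.2). For I.1 itself, faithfulness should be argued directly: given nonzero $N \in \cu^{t=0}$, take a weight decomposition $X \to N \to Y$ with $X \in \cu_{w\le 0}$, $Y \in \cu_{w\ge 1}$; since $N \in \cu^{t\le 0} = \cu_{w\ge 0}$ one has $X \in \cu_{w=0}$ by Proposition \ref{pbw}(\ref{iwd0}), and the map $X \to N$ is nonzero because $N \notin \cu_{w\ge 1}$ (as $\cu_{w\ge 1} = \cu^{t\le -1}$ and $N \in \cu^{t\ge 0}$). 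This avoids any appeal to non-degeneracy.
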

\begin{proof}

I Assertion 1 is exactly part 4 of Theorem 4.4.2 of \cite{bws}; it easily
implies assertion 2 (which one can reduce to the case of a small $\cu$).

II.1. Immediate from the definition of adjacent structures.

2. See Remark 4.4.6 of ibid.

III 1--2: These statements easily follow from the results of \cite{neebook}.

  Next, Theorem 8.3.3 of \cite{neebook} implies that $\du$ 
   satisfies the Brown
 representability condition (see Definition 8.2.1 of ibid.).
 Hence  Proposition 9.1.19 of ibid. yields the existence of $\eeu$ and 
  $G$;  
 $G$ is a full embedding since it is adjoint to a localization
functor. Corollary 3.2.11 of  \cite{neebook} yields that $\pi$ respects
 coproducts;
 this
 finishes the proof of
 assertion III.1.   Assertion III.2 is given by Theorem 4.4.9 of ibid. (one should take
$\alpha=\aleph_0$ in it).

3. Let $c$ be a non-zero object of $\cu$. Then there exist  $h\in \obj \hu$, $i\in \z$, and $m\in \cu(h[i],c)$. Indeed, it suffices to note that the full 
subcategory of $\cu$ whose objects are ${}^\perp\{c[-i],\ i\in\z\}$ is  triangulated, strict, and   is closed with respect to 
 coproducts; hence it can contain $\hu$ only if $c=0$.  
Hence  $\cu$ is {\it negatively well-generated} by $\obj \hu$ in the sense of Definition 4.5.1 of \cite{bws}. Thus  we can apply Theorem 4.5.2 of  ibid.
 to $\cu$; this yields the result.

\end{proof}

One may say that part II.2 of this Proposition settles the difficulties caused
by the fact that the (left) adjoint to a $t$-exact functor is usually not
$t$-exact.

Now we formulate the main result of this subsection; we will consider an example
for it in detail in \S\ref{scompneera}.

\begin{theo}\label{tadjnew}

Let $\cu,\cu',B,\hu,\ddu,\eeu,\pi,G,
\eu$ be as in Proposition \ref{padstr}(III)
(and satisfy all the conditions
mentioned in loc. cit.). Assume moreover that $B$ consists of cones of some
set  $S\subset \mo \hu$, and that $\hu$ generates $\cu'$ (see \S\ref{snotata}). Then the following
statements are valid.

1.  $\eeu$ possesses a 
 weight structure $w_{\eeu}$ and also a non-degenerate
adjacent $t$-structure $t_{\eeu}$.

2. $\hu[S\ob]_{add}$ is a full additive subcategory of $\hw_{\eeu}$ (via
$\pi$); $\hw_{\eeu}$ is equivalent to the Karoubization of the category of all
small coproducts of objects of $\hu[S\ob]_{add}$.

3. 
$w_{\eeu}$ restricts to a weight structure for $\eu\subset \eeu$. 

4. $G$ is $t$-exact; $\pi$ is weight-exact.

5. $\hrt_{\eeu}$ is equivalent to $\adfu(\hu[S\ob]_{add}^{op},\ab)$. It is 
a full exact abelian subcategory of $\hrt\cong \adfu (\hu^{op},\ab)$ (via $G$); 
an $I\in \obj \adfu (\hu^{op},\ab)$ belongs to  $G(\hrt_{\eeu})$ if and only if $I(s)$ is  bijective
for any $s\in S$.

\end{theo}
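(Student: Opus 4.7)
The plan is to apply Proposition \ref{padstr}(III.3) to $\eeu$, with $\hu[S\ob]_{add}$ playing the role that $\hu$ plays for $\cu$; this single application will produce $w_{\eeu}$ together with an adjacent non-degenerate $t_{\eeu}$, identify $\hw_{\eeu}$ with the Karoubization of coproducts of $\hu[S\ob]_{add}$, and give $\hrt_{\eeu}\cong\adfu(\hu[S\ob]_{add}^{op},\ab)$. To set this up I first invoke Proposition \ref{pbw}(\ref{igen}) on $\cu'$ (which by hypothesis is generated by the negative subcategory $\hu$) to obtain a bounded weight structure on $\cu'$ whose heart contains $\hu$; then $B$ lies in $\cu'_{[0,1]}$ by Proposition \ref{pbw}(\ref{i01}), so Theorem \ref{twloc}(5) endows $\eu$ with a bounded weight structure $w_{\eu}$ whose heart contains $\hu[S\ob]_{add}$ as a full additive subcategory. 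Via the full embedding $\eu\hookrightarrow\eeu$ of Proposition \ref{padstr}(III.2), $\hu[S\ob]_{add}$ becomes a small full additive subcategory of $\eeu$, and I will use this copy for the main application.

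To verify the hypotheses of Proposition \ref{padstr}(III.3) for this subcategory: (a) negativity is preserved by the full embedding, since $\hu[S\ob]_{add}\subset\hw_{\eu}$ is already negative in $\eu$; (b) the objects are compact in $\eeu$ because $\pi$ preserves compactness by Proposition \ref{padstr}(III.1) and the composite $\hu\to\hu[S\ob]_{add}\hookrightarrow\eeu$ coincides with the restriction of $\pi$ to $\hu$; (c) generation as a localizing subcategory follows because $\pi$ commutes with coproducts, so the preimage in $\cu$ of the localizing subcategory of $\eeu$ generated by $\hu[S\ob]_{add}$ is itself a localizing subcategory of $\cu$ containing $\hu$, hence equals $\cu$. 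Proposition \ref{padstr}(III.3) now yields assertion (1), all of (2), and the equivalence $\hrt_{\eeu}\cong\adfu(\hu[S\ob]_{add}^{op},\ab)$ that is half of (5).

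For assertion (3) the embedding $\eu\hookrightarrow\eeu$ is weight-exact by Proposition \ref{pbw}(\ref{iwe}), since its source has a bounded weight structure whose heart sits inside $\hw_{\eeu}$. To see that $w_{\eeu}$ restricts to $w_{\eu}$, I take $X\in\obj\eu$ with $X\in\eeu_{w_{\eeu}\le 0}$, form a weight decomposition $X_1\to X\to X_2\to X_1[1]$ in $\eu$, and observe that $X_2$ lies in $\eeu_{w_{\eeu}\ge 1}$ (by weight-exactness of the embedding) while also belonging to $\eeu_{w_{\eeu}\le 0}$ by extension-stability (Proposition \ref{pbw}(\ref{iext})); orthogonality then forces $X_2=0$, so $X\cong X_1\in\eu_{w_{\eu}\le 0}$, and the dual argument handles the $\ge 0$ side. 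For (4), I prove $t$-exactness of $G$ by combining Proposition \ref{padstr}(I.2) (whose orthogonality characterizations of $\cu^{t\le 0}$ and $\cu^{t\ge 0}$ reduce to orthogonality against shifts of $\hu$, since $\hw$ is built from $\hu$ by coproducts and retracts) with the adjunction identity $\cu(H,G(N)[k])=\eeu(\pi(H),N[k])$ and the fact that $\pi(H)\in\hw_{\eeu}$; weight-exactness of $\pi$ then follows formally from Proposition \ref{padstr}(II.2).

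The remaining part of (5) is a Yoneda argument: under the equivalences $\hrt\cong\adfu(\hu^{op},\ab)$ and $\hrt_{\eeu}\cong\adfu(\hu[S\ob]_{add}^{op},\ab)$, the functor $G$ on hearts corresponds to restriction along $\hu\to\hu[S\ob]_{add}$, because for $M\in\hrt_{\eeu}$ and $H\in\hu$ adjunction gives $\cu(H,G(M))=\eeu(\pi(H),M)$; consequently the essential image of $G$ restricted to $\hrt_{\eeu}$ consists precisely of those $I\in\adfu(\hu^{op},\ab)$ that factor through $\hu[S\ob]_{add}^{op}$, which by the universal property of additive localization (Theorem \ref{taddlocu}) coincides with the class of $I$ such that $I(s)$ is bijective for every $s\in S$; exactness of the embedding $\hrt_{\eeu}\hookrightarrow\hrt$ is immediate from the $t$-exactness of the fully faithful $G$. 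The principal technical point I expect is the identification of $\pi(\hu)$ with $\hu[S\ob]_{add}$ as a full subcategory of $\eeu$, that is, checking that inverting $S$ and then embedding produces no extra morphisms beyond those already present in $\eeu$; this is delivered by Theorem \ref{twloc}(5) together with the full embedding $\eu\hookrightarrow\eeu$ of Proposition \ref{padstr}(III.2), and once it is secured the rest of the proof is a mechanical combination of adjacency with adjunction.
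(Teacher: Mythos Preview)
Your proposal is correct and follows essentially the same route as the paper: apply Proposition \ref{padstr}(III.3) to $\eeu$ with $\hu[S\ob]_{add}$ in place of $\hu$ (after identifying the latter as a negative, compact, generating subcategory via Proposition \ref{padstr}(III.1,2) and Theorem \ref{twloc}(5)), then deduce the $t$-exactness of $G$ from $\pi(\cu_{w=0})\subset\eeu_{w_{\eeu}=0}$ via Proposition \ref{padstr}(I.2) and adjunction, and the weight-exactness of $\pi$ from Proposition \ref{padstr}(II.2). Your treatment of assertion 3 (showing that $w_{\eeu}$ actually restricts, not merely that the embedding is weight-exact) and the explicit Yoneda identification in assertion 5 are more detailed than the paper's, but the underlying argument is the same.
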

\begin{proof}

Note that  the full subcategory of $\eu$ 
 whose objects are those of $\hu$ is isomorphic to  
$\hu[S\ob]_{add}$ (this is the first part of our assertion 2), and  is negative
in $\eu\subset \eeu$.
Hence
Proposition \ref{padstr}(III) yields that 
$\eeu$ satisfies the conditions of Proposition \ref{padstr}(III.3) with
$\hu[S\ob]_{add}$ instead of $\hu$. Thus our assertion 1, the 
rest of assertion 2, and the first part of assertion 5 follow from loc. cit.

Now, 
Theorem \ref{twloc}(5) also yields the existence of a weight structure for $\eu$ whose heart is $\kar_{\eu}(\hu[S\ob]_{add})$;
it is compatible with $w_{\eeu}$
 by  
Proposition \ref{pbw}(\ref{iwe}). Thus we proved assertion 3.

Next, since $\pi$ commutes with  coproducts, we have
$\pi(\cu_{w=0})\subset \eeu_{w_{\eeu}=0}$. Hence Proposition \ref{padstr}(I.2)
yields the $t$-exactness of $G$. Part II.2 of loc. cit. also implies that
$\pi$ is weight-exact; this finishes the proof of assertion 4.

The second part of assertion 5 is 
given by the $t$-exactness of $G$. In order to verify its last part, it suffices
to note that an $I\in \adfu(\hu^{op},\ab)$ factors through $\hu[S\ob]_{add}$
if and only if it converts all $s\in S$ into bijections.

\end{proof}

\begin{rema}\label{radcomp1}
1. It is easy to construct such a setting for any 
small $\hu$. To this end, following \S4 of \cite{dwy} one can 
take for $\cu$  the derived category of $\adfu(\hu^{op},\ab)$. Note that we have $\hw\subset \hrt$ in this example (yet we don't have an inclusion of $\hw_{\eeu}$ into $\hrt_{\eeu}$ for a general $S$ since $\pi$ is not $t$-exact).


2.  
Any object of $\hu$ yields the obvious exact functor from the category
$\adfu(\hu^{op},\ab)\cong \hrt$ to $\ab$. Set-theoretic difficulties prevent
us from saying that $\hu^{op}$ embeds into the category of exact functors from
$\hrt$ to $\ab$ (and the same is true for $\hu[S\ob]_{add}^{op}$ and
$\hrt_{\eeu}$).  Yet one can still choose a (large enough) small abelian
(exact) subcategory $\aau\subset \hrt$ such that $\hu^{op}$ embeds into the
category of exact functors $\aau\to\ab$. So we obtain certain ``Yoneda-like''
descriptions of $\hu$ and $\hu[S\ob]_{add}$. Note also that the restriction of
$\pi$ to $\hu$ can be described via the restriction of the exact functors
$\aau\to\ab$ mentioned to the subcategory $\aau\cap G(\hrt_{\eeu})$.

One can also avoid set-theoretic difficulties by finding some ``small
substitute'' for $\cu$; cf. Definition 4.7 of \cite{neeran}.

3. One can also prove the existence of $w_{\eeu}$ and the weight-exactness of
$\pi$ more directly 
via an ``unbounded'' generalization of Proposition \ref{ploc}.
\end{rema}

\subsubsection{The relation between our results and 
some of the results of \texorpdfstring{\cite{dwy} and
\cite{neeran}}{those}}\label{scompneera}
 
We adopt the notation of Theorem \ref{tadjnew}.
 
The papers  \cite{dwy} and  \cite{neeran} mostly consider $\hu$ being the category of  finitely generated projective modules over a ring $R$ (though in \S4 of \cite{dwy} also arbitrary small additive categories were considered; cf. Remark \ref{radcomp1}(1)).
 Then one can take $\cu=D(R)$. The main ``localization'' results of the papers cited can be stated as follows: 
    the 
non-commutative localization ring $R[S\ob]$ is naturally isomorphic to the endomorphism ring of $\rr$ (i.e.,  of $R$ considered as a left $R$-module) in 
the  corresponding localization $\eeu$ of $D(R)$. Certainly, this statement also follows from our results (use the embedding $\eu\to \eeu$ and apply Theorem \ref{twloc}(4); cf. also \S\ref{sncloc}). Besides, note  that $\cu=D(R)$ is a very partial case of Theorem \ref{tadjnew}. 

\begin{rema}\label{radcomp2}

1. A major advantage of our methods is that we can use $\cu'=K^b(\hu)\subset D(R)$ instead of $\cu$ (and $\eu=\cu'/\lan \co(S)\ra$ instead of $\eeu$). Certainly, $\cu'$ is ``much smaller'' than $\cu$, whereas 
Proposition \ref{paddlocu} makes explicit computations in $\eu$ quite easy. Note in contrast that no attempt was made in \cite{dwy} and  \cite{neeran} to
``calculate $R[S\ob]$ explicitly'' 
(i.e., to reprove the results of \cite{gera} and \cite{malc}).
 Besides, weight structures (coming from {\it negative well-generating} subcategories of compact objects; see the proof of Proposition \ref{padstr}(III.3)) are very useful for constructing $t$-structures in this setting 
 cf. \S\S4.5--4.6 of \cite{bws}.
Lastly, to the belief of the authors, it is easier to deal with the functor $K^b(F)$ for an additive $F:\hu\to \au$ than with functors of the type $-\otimes_R R'$ for a ring homomorphism $R\to R'$.

2. The $t$-structure for $\cu$ given by 
Proposition \ref{padstr}(III.3) is exactly the canonical $t$-structure for $D(R)$. Thus for this particular setting
the existence of $t_{\eeu}$ (see Theorem \ref{tadjnew}(1)) compatible with the  embedding  $G:\eeu\to \cu$  was proved in Proposition 3.4 of \cite{dwy} and in Lemma 6.3 of \cite{neeran} (and was important for these papers).

The usage of $t$-structures 
explains why ``big'' triangulated categories were needed in 
the papers cited; cf. Remark 4.3.4(4) of \cite{bws}. 
Note in contrast that the weight-exactness of  $\pi$ holds for its restriction to a ``much smaller'' $\cu'$ (such a category was very important for the $K$-theory computations of \cite{neeran}).
Besides, the ``duality'' between weight and $t$-structures allows us to study the properties of $\pi$ (that is not left $t$-exact in general).

3. Certainly,  the $t$-exactness of  $G$  yields the right $t$-exactness of
$G\pi$. In Proposition 3.2 of \cite{dwy} it was verified: 
 $\cu(\rr,
G\pi(\rr))\cong \cu(\rr, H_0^t(G\pi(\rr)))$ 
is isomorphic to $R[S\ob]$. Certainly, this statement also follows from our
calculation of $\hw_{\eeu}$ (since $R[S\ob]\cong  \enom_{\eeu}\rr$).

4. In \cite{neeran}  the canonical $t$-structure for $D(R)$ was mentioned explicitly, whereas in \cite{dwy} the corresponding cohomology (denoted by $\pi_i$) was considered. 
The existence of cohomology functors is certainly an advantage of derived categories (and of general triangulated categories 
endowed with $t$-structures). Yet the results of the current paper (together with the ones of \S2.5--2.6 of \cite{bger}) demonstrate that in certain situations it is quite useful to consider a certain weight structure instead of a $t$-structure {\it orthogonal} to it (see Definition 2.5.1 of ibid.; this is a certain generalization of Definition \ref{deadj} above).

5. For a triangulated category $\cu$ possessing a differential graded
enhancement
(see Definition 6.1.2(3) of \cite{bws}; this includes all possible triangulated
categories whose objects are various complexes) and a weight structure $w$ on
it the weak weight complex functor (see Proposition \ref{pbw}(\ref{iwwc})) can
be lifted to an exact functor $t^{st}:\cu\to K(\hw)$ (see \S6.3 of \cite{bws}
in the case when $w$ is bounded). So, the localization homomorphism $R\to
R[S\ob]$ is {\it stably} flat (see Theorem 0.7 of \cite{neeran} and
Proposition 3.3 of \cite{dwy}) if and only if $t^{st}_{\eu}$ is fully
faithful.

6. Certainly, in \cite{neeran} and \cite{dwy} (and in several related papers of
the same authors) there are several results that we do not mention above (since
they are beyond the scope of the current paper).
They mostly concern the case when the localization homomorphism $R\to R[S\ob]$
is stably flat; also, several examples of non-commutative localizations were
considered.

\end{rema}

\section{On birational motives and weights for them}\label{sbir}

We prove that our results yield an ``elementary'' proof of the existence of
certain (birational) weight structures for various triangulated categories of birational
motives (defined via the method of \cite{kabir}). Besides, Theorem \ref{tadjnew}
yields the existence of  $t$-structures adjacent to these weight structures; in ibid. in the case of 
motives over a field it was shown that the corresponding $t$-structure is a restriction of  the Voevodsky's homotopy $t$-structure to the category of birational motivic complexes.

\subsection{The definition of birational motives}\label{sbmot}

Now we define certain categories of birational motives over a base $U$. We
generalize 
the corresponding definition from \cite{kabir} to a somewhat more general context.

For a scheme $U$ one considers a certain additive category $\cors$. Two
important 'types' of $\cors$ were described in detail in  \cite{degcis}
starting from \S9 (there $U$ was assumed to be 
noetherian separated). We do not need a precise definition of $\cors$ in this
paper (so, we will not specify which version of it we consider); in particular,
one can choose arbitrary (associative commutative unital) ``coefficient rings''
when defining $\cors$. We will only require $\cors$ to satisfy the following
properties:  its objects are certain finite type 
 $U$-schemes (so, the category is essentially small); the disjoint union
 operation  yields the direct sum in $\cors$; for any $X/U$ such that $X\in
 \obj \cors$ and any smooth finite type $f:Y\to X$ we have: $Y\in \obj \cors$
 and $f\in \cors(Y,X)$. We will make no distinction in notation between schemes
 and their morphisms and the corresponding objects and morphisms in various
 ``motivic'' categories that we consider in this section.

Next, (see \S11.1.9 of ibid.) one considers $K^b(\cors)$ and (following
\cite{flo}) localizes it by the 
triangulated subcategory generated by 
two types of complexes: the complex $\af^1\times X\to X$ for any $X\in \obj
\cors$ (we will denote the set of these complexes by $B_{HI}$) 
and by
complexes of the form $W\xrightarrow{\begin{pmatrix}-k \\ g \end{pmatrix}}
Y\bigoplus V\xrightarrow{\begin{pmatrix} f & j \end{pmatrix}} X$ 
(we will  
denote the set of these complexes by $B_{MV}$)
for any elementary (Nisnevich) distinguished square
  $$\begin{CD}
 W@>{k}>>Y\\
@VV{g}V@VV{f}V \\
V@>{j}>>X
\end{CD}
$$
of objects of $\cors$ (this is a certain Cartesian square; $j,k$ are open dense
embeddings).
One defines $\dmges$ as the Karoubization of the triangulated category
obtained.

Now (following Definition 5.1 of \cite{kabir}) we define $\birs$ as the
Karoubi\-zation of the localization $\dmges$ by the subcategory generated by
(images in $\dmges$ of) complexes of the form  $U\stackrel{j}{\to} X$ 
 for all open dense embeddings $j$ of objects of $\cors$ (here the upper index ``o'' stands for ``open''). We will denote the
 set of these complexes by $B_{bir}$.

\subsection{A weight structure on \texorpdfstring{$\birs$}{birational geometric
motives}}

We make the following simple observation: by definition, 
$B_{MV}\subset \lan B_{bir} \ra$. Hence $\birs$ is (isomorphic to) the
Karoubization of the localization of $K^b(\cors)$ by 
$\lan B_{bir}\cup B_{HI}\ra$.  Hence we can apply all the results of the
previous sections to this setting!

\begin{theo}\label{wsbir}
1. The Karoubi-closures of the 
subclasses $(K^b(\cors)^{\le
0}$, $K^b(\cors)^{\ge 0}\subset \obj \birs)$ yield a weight structure $w_{bir}$
on $\birs$. Moreover, this is  the unique weight structure such that the localization functor
$K^b(\cors) \to \birs$ is exact.

2. Denote the heart of $w_{bir}$ by $\birchs$. Its objects are exactly all the retracts
of
$\obj \cors$
 in $\birs$, and it is isomorphic to the Karoubization of
$\cors[(S_{bir}\cup S_{HI})\ob]_{add} \cong \cors[S_{bir}
\ob]_{add} \cong \cors[S_{bir}\ob]$. 
\end{theo}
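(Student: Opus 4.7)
The strategy is to recognize the theorem as a direct application of Theorem \ref{twloc} to the stupid weight structure on $K^b(\cors)$, followed by Karoubization via Proposition \ref{pbw}(\ref{idemp}). The key preparatory observation, already made just before the statement, is that $B_{MV}\subset\lan B_{bir}\ra$, so $\birs$ is canonically the Karoubization of $\eu := K^b(\cors)/\lan B_{bir}\cup B_{HI}\ra$. Setting $S:=S_{bir}\cup S_{HI}\subset\mo(\cors)$ and $B:=\co(S)=B_{bir}\cup B_{HI}$, and equipping $K^b(\cors)$ with the stupid weight structure $w$ of Remark \ref{rstws}(1), we have $\cors\subset\hw=\kar_{K^b(\cors)}(\cors)$ and $B\subset K^b(\cors)_{[0,1]}$ by Proposition \ref{pbw}(\ref{i01}); thus the hypotheses of Theorem \ref{twloc}(5) are met with $\au=\cors$.

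Theorem \ref{twloc}(5) then yields a unique weight structure $w_{\eu}$ on $\eu$ making the localization weight-exact, and identifies the full subcategory $\hu'\subset\hw_{\eu}$ with $\obj\hu'=\obj\cors$ as $\cors[S\ob]_{add}$. Proposition \ref{pbw}(\ref{idemp}) transports $w_{\eu}$ to a unique bounded weight structure $w_{bir}$ on $\birs=\kar(\eu)$ for which $\eu\to\birs$ is weight-exact, whose heart is the abstract Karoubization of $\hw_{\eu}$; since $\hw_{\eu}=\kar_{\eu}(\hu')$ by Theorem \ref{twloc}(2), this Karoubization collapses to $\kar(\cors[S\ob]_{add})$. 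Composition then gives weight-exactness of $K^b(\cors)\to\birs$, and uniqueness of $w_{bir}$ among all such structures follows from Proposition \ref{pbw}(\ref{igen}) applied to the (negative, additive, generating) image of $\cors$ inside $\birs$. This proves part 1 and the identification $\birchs\cong\kar(\cors[(S_{bir}\cup S_{HI})\ob]_{add})$ of part 2; the description of $\obj\birchs$ as retracts of $\obj\cors$ is immediate from this identification, while $\cors[S_{bir}\ob]_{add}\cong\cors[S_{bir}\ob]$ follows from Corollary \ref{taddloc} after enlarging $S_{bir}$ to contain identities and be closed under direct sums (permissible by Remark \ref{rcismo}(1)).

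The main obstacle is the remaining isomorphism $\cors[(S_{bir}\cup S_{HI})\ob]_{add}\cong\cors[S_{bir}\ob]_{add}$, which requires showing that each $S_{HI}$-projection $\af^1\times X\to X$ becomes invertible in $\cors[S_{bir}\ob]_{add}$. I would attack this inside $K^b(\cors)/\lan B_{bir}\ra$, where Mayer--Vietoris relations remain available because $B_{MV}\subset\lan B_{bir}\ra$. The natural approach is to exploit the standard open cover of $\p^1\times X$ by $\af^1\times X$ and $(\p^1\setminus\{0\})\times X$ with intersection $\mathbb{G}_m\times X$, together with the open dense embeddings $\af^1\hookrightarrow\p^1$, $\p^1\setminus\{0\}\hookrightarrow\p^1$, and $\mathbb{G}_m\hookrightarrow\af^1$, to produce a chain of comparisons in this quotient collapsing to $M(\af^1\times X)\cong M(X)$; transporting this identification to the heart via a second application of Theorem \ref{twloc}(5) with only $S_{bir}$ then yields the desired invertibility of the $S_{HI}$-morphisms in $\cors[S_{bir}\ob]_{add}$.
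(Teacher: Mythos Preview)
Your argument for part~1 and for the first identification in part~2 coincides with the paper's: both invoke Theorem~\ref{twloc} (parts 1, 4, 5) for the stupid weight structure on $K^b(\cors)$ and then Karoubize via Proposition~\ref{pbw}(\ref{idemp}). Your derivation of $\cors[S_{bir}\ob]_{add}\cong\cors[S_{bir}\ob]$ from Corollary~\ref{taddloc} together with Remark~\ref{rcismo}(1) also matches what the paper does.

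The one genuine gap is your attack on $\cors[(S_{bir}\cup S_{HI})\ob]_{add}\cong\cors[S_{bir}\ob]_{add}$, i.e., on showing that the projections $\af^1\times X\to X$ become invertible in $\cors[S_{bir}\ob]_{add}$. The paper does not argue this itself but defers to a geometric argument of Colliot-Th\'el\`ene in Appendix~A of \cite{kabiratgeom}. Your Mayer--Vietoris sketch, as written, does not reach the conclusion: applying the birational identifications you list to the Nisnevich square for $\p^1\times X=\af^1\times X\cup(\p^1\setminus\{0\})\times X$ with intersection $\mathbb{G}_m\times X$ only produces the split triangle $M\to M\oplus M\to M$ for $M=M(\af^1\times X)$, which says nothing about $M(X)$. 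None of the open dense embeddings you name has $X$ itself as a term, so no chain built from them can manufacture a comparison with $M(X)$; a separate geometric input (of the sort supplied by Colliot-Th\'el\`ene's argument) is genuinely required here.
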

\begin{proof}
1. The existence follows from the Theorem \ref{twloc}
and the uniqueness can be obtained from 
Theorem \ref{twloc}(5).
2. The isomorphism of  $\birchs$ with $\kar(\cors[(S_{bir}\cup S_{HI})\ob]_{add})$ follows from
\ref{twloc}(4). Two remaining isomorphisms can be proved via 
an argument of J.-L.
Colliot-Th\'el\'ene (see Appendix A of \cite{kabiratgeom}) using the fact that
$S_{bir}$ is closed under direct sums (see Remark \ref{rcismo}(1)).
\end{proof}

 
Now we describe certain ways 
of making computations in $\birs$ and
$\birchs$.


  It could make sense to  start from the localization of $K^b(\cors)$ by $\lan
  B_{HI}\ra$. Note here that all elements of $S_{HI}$ are coretractions (cf.
  Remark \ref{r2purloc}(3)); so, one can apply the calculations from \S8.2 of
  \cite{mymot}. Moreover, one can probably express $D_{HI}=K^b(\cors)/\lan
  B_{HI}\ra$ in terms of certain (cubical) Suslin complexes; cf. \S5 of ibid.
  In particular, the morphisms between objects of $\cors$ in $D_{HI}$ could be
  expressed as the morphisms in $\cors$ modulo certain ``homotopy equivalence''
  relation.
  Next, one should localize $D_{HI}$ by $B_{bir}$ (and consider the
  Karoubization of the category obtained).
 Note here:  in order to compute $\birchs$, it suffices to consider the
 localization of the $K^b(\hw_{HI})$ by $B_{bir}$ (here we apply 
 Theorem \ref{twloc}(5)).

One can also apply Remark \ref{radcomp1}(2) to  calculations in $\birchs$
(somehow).
Note here: Theorem \ref{tadjnew}(1,4,5) yields the natural generalization of
Proposition 7.4 
 of \cite{kabir} in our setting 
(the corresponding $\hrt_{\eeu}$ was 
denoted by $HI^0(-)$ in 
  \cite{kabir}). 

Now we briefly recall some consequences of the existence of a bounded weight
structure for $\birs$.

\begin{rema}

1. There exists an exact conservative {\it weight complex} functor $\birs\to
K^b(\birchs)$; cf. \S3.1 of \cite{brelmot}.

2. We define $K_0(\birchs)$ as the abelian group with a generator $[X]$
for each $X\in \obj \birchs$, and the relation $[Y]=[X]+[Z]$ for any
$X,Y,Z\in\obj \birchs$ such that $Y\cong X\bigoplus Z$.
For $K_0(\birs)$ 
we take similar generators and  set $[Y]=[X]+[Z]$ if
$X\to Y\to Z\to X[1]$ is a distinguished triangle.
Then the embedding $\birchs\to \birs$  yields an
isomorphism $K_0(\birchs)\cong K_0(\birs)$; see Theorem 5.3.1 of \cite{bws}.

3. For any (co)homological functor $H:\birs\to \au$ ($\au$ is an abelian
category) there exists certain {\it Chow-weight} spectral sequences $T(H,X)$
that relate the cohomology of any $X\in \obj  \birs$ with that of the terms of
its weight complex. $T(H,X)$ is $\birs$-functorial in $X$ starting from $E_2$.
 $T(H,X)$  induces a certain (Chow)-weight filtration on $H_*(X)$ (or
 $H^*(X)$); this filtration is also $\birs$-functorial and can be (easily)
 described in terms of weight decompositions (only); see \S2 of ibid.


\end{rema}

We also make certain remarks on the relevance of our results.

\begin{rema}\label{rsplit}

1. So we obtain 
some new tools for ``computing'' $\birchs$ (for a general $U$).  
Moreover, the authors hope that Remark \ref{rcompu}(1) 
 will be relevant for computing  $\birs$-morphism groups between 
$\birs_{w_{bir}=0}$ and $\birs_{w_{bir}=n}$ for (certain) $n<0$.

Another relevant observation here: for any additive category $\au,\ S\subset \mo(\au)$,
$\cu=K^b(\au)$, $B=\co(S)\subset K^b(\au)^{[-1,0]}$, one can easily describe 
$B_{[n,0]}$
(cf. Proposition \ref{ploc}(2,3)) by a natural
generalization of (\ref{etrmor}).


2. It is certainly ``more interesting'' to consider weight decompositions of
those objects of $\birs$ that do not belong to $\birs_{w=i}$ for any $i\in \z$
(then one can obtain non-trivial weight filtrations and weight spectral
sequences for (co)homology). Certainly, we would also like these objects to
``come from $U$-schemes''.
So, it seems natural to look for some birational motives of non-smooth finite
type $U$-schemes and
 for (birational) motives with compact support of (certain) finite type
 $U$-schemes.
 If one can define certain motives of this type for $U$-schemes in $\dmges$,
 then one can also obtain the corresponding birational motives by applying the
 localization functor. Unfortunately, it seems that for a general $U$ the
 theory of \cite{degcis} yields ``reasonable'' motives (with compact support)
 for arbitrary finite type $U$-schemes only in the category $\dmcs$ (this is
 the ``stabilization'' of $\dmges$ with respect to Tate twists; it's better to
 consider motives with rational coefficients here). Yet this is no problem 
 if $U$ is the spectrum of a perfect field. Besides,  in the general case one
 can still define certain motives (with compact support) of 
 $U$-schemes in $\dmes$ (this is a ``cocompletion'' of $\dmges$; one also
 obtains certain ``non-constructible'' birational motives this way) using
 the results of ibid.

3. Since an $X\in\obj \cors$ in $\birs$ becomes isomorphic to any open dense
$X'\subset X$, the objects of $\birchs$ are retracts of finite coproducts of ``birational
motives of the generic points of objects of $\cors$''. Moreover, we suspect (following
\cite{kap}) that $\birs$ splits as the direct sum of the corresponding
categories over (generic) Zariski points of $U$. Yet this could depend on the
version of $\cors$ chosen. In any case, even the existence of $w_{bir}$ was not
clear previously for $U$ being the spectrum of an imperfect field.


4. In the case when $U$ is the spectrum of a perfect field $k$ (and for the
``classical'', i.e., Voevodsky's $\cors$; see \cite{1}) the existence of
$w_{bir}$ was already known previously. One can distinguish three (``old'')
methods of the proof. Their (common) main disadvantage is that they rely on
certain (quite hard) results of Voevodsky; they also require certain additional
restrictions on $U$.

(i) By Proposition \ref{pbw}(\ref{igen},\ref{idemp}) it suffices to verify that
the objects of $\cors$ (i.e., smooth $k$-varieties) form a negative subcategory
of $\birs$. This was done in \S7 of \cite{kabir} in the case $\cha U=0$.

(ii) Another possible approach (that yields two distinct proofs of the existence of $w_{bir}$) is to prove the existence of some weight structure
$w_{eff}$ for  $\dmge=\dmge(k)$ or for some its ``completion'' that 
yields a weight structure for $\birs$. Note here: in this case the subcategory
$\lan B_{bir}\ra_{\dmge}$ is exactly the Tate twist $\dmge(1)$ of $\dmge$
(resp. of its ``completion''); it is isomorphic to $\dmge$ by the Voevodsky's
cancellation theorem. Hence by Theorem \ref{twloc}(1) 
or by  Proposition 8.1.1(1) of \cite{bws} (that is essentially its partial case;
see Remark \ref{r2purloc}(3) and Remark \ref{rintro}) $w_{eff}$ yields a weight 
structure for $\birs$ if there exists an $i\in \z$ such that the functor
$-(1)[i]$ (i.e., the composition of the Tate twist with $[i]$) is weight-exact
with respect to $w_{eff}$ (see Definition \ref{dwstr}(VI)).

Surprisingly, there are two ``geometric'' possibilities here and an infinite
set of certain ``less explicit'' ones; they are (``mostly'') parametrized by
$i$.

(iia) In the case $i=2$ there is the {\it Chow weight structure}; its heart is
the category of effective Chow motives (which is a full subcategory of
$\dmge$; see \cite{1}). This was our reason for denoting $\hw_{bir}$ by
$\birchs$.
 
In the case $\cha k=0$ one can take any coefficient ring here (see Proposition
6.5.3 of \cite{bws}); in the case $\cha k=p>0$ one has to invert $p$ (and apply
the main result of \cite{bmres}).

(iib) For $i=1$ 
one can construct a certain Gersten
weight structure. Its heart is ``cogenerated''  by certain (co)motives of function
fields over $U$ (see Proposition 4.1.1 of \cite{bger} for more detail in the
case when $k$ is countable, and \S6.4 of \cite{bgern} for the general case); so,
it is only defined for a certain ``completion'' $\gd(k)$ of $ \dmge(k)$.

(iic) One can also construct a ``$-(1)[i]$-stable'' weight structure on a
certain version of $\gd(k)$ for any $i\in \z$; see \S4.9 of \cite{bger}. Even
more generally, one can ``glue'' certain weight structure from $w_{bir}$ by
considering certain ``shifts'' of it (by arbitrary integers)
 on the
categories $\dmge(j)/\dmge(j+1)\cong \birs$ for $j$ running through all $\n$.
Yet the author does not know of any constructions of  weight structures of this
sort that would not rely on the existence of the  ``explicit'' weight
structures (that were already mentioned above).

\end{rema}


\begin{thebibliography}{1}


\bibitem[BaS01]{ba} Balmer P.,  Schlichting M. Idempotent completion
of triangulated categories// Journal of Algebra 236, no. 2 (2001),
819-834.

\bibitem[BBD82]{bbd} Beilinson A., Bernstein J., Deligne P., Faisceaux pervers// 
Asterisque 100, 1982, 
5--171.




\bibitem[Bon09]{mymot} Bondarko M.V.,
{Differential graded motives:
 weight complex, weight filtrations and spectral sequences for
 realizations; Voevodsky vs. Hanamura}// J. of the Inst. of Math.
of Jussieu,
 v.8, 2009, no. 1, 39--97,
see also
\url{http://arxiv.org/abs/math.AG/0601713}


\bibitem[Bon10a]{bws} Bondarko M.V.,
Weight structures vs.  $t$-structures; weight filtrations,
 spectral sequences, and complexes (for motives and in general)//
 J. of K-theory, v. 6, i.	03, p. 387--504, 2010,
see also \url{http://arxiv.org/abs/0704.4003}




\bibitem[Bon10b]{bger} Bondarko M.V., Motivically functorial
coniveau spectral sequences;
 direct summands of
 cohomology of function fields//
 Doc. Math., extra volume: Andrei Suslin's Sixtieth Birthday (2010), 33--117.




\bibitem[Bon11]{bmres} Bondarko M.V., $\mathbb{Z}[\frac{1}{p}]$-motivic resolution
of singularities//  Compositio Mathematica, vol. 147, is. 5, 2011, 1434--1446.



\bibitem[Bon12]{btrans} Bondarko M.V., Weight structures and 'weights' on the
hearts of $t$-structures// Homology, Homotopy and Applications, vol. 14, 2012,
No. 1,  239--261.
 
 
\bibitem[Bon13a]{brelmot} Bondarko M.V., Weights for Voevodsky's motives over a base; 
relation with mixed complexes of sheaves, Int. Math. Res. Notes (2013),
doi: 10.1093/imrn/rnt088, 
\url{http://imrn.oxfordjournals.org/content/early/2013/05/17/imrn.rnt088.short}, 
\url{http://arxiv.org/abs/1304.6059}


\bibitem[Bon13b]{bgern} Bondarko M.V., Gersten weight structures for motivic homotopy categories; direct summands of cohomology of function fields  and 
 coniveau spectral sequences, 
 preprint, \url{http://arxiv.org/abs/1312.7493}



\bibitem[Coh85]{cohn} Cohn P.M., Free rings and their relations, second edition,
Academic Press, London \& New York, 1985. xxii + 588 pp.


\bibitem[CiD09]{degcis} Cisinski D.-C., D\'eglise F., Triangulated categories of
mixed motives, preprint,
\url{http://arxiv.org/abs/0912.2110}


\bibitem[Cis10]{cismo} Cisinski D.-C., 
the answer \url{http://mathoverflow.net/questions/44047/localizing-an-arbitrary-additive-category/44155#44155}


\bibitem[Dwy06]{dwy} Dwyer W.G., Noncommutative localization in homotopy theory,
in: Non-commutative localization in algebra and topology, 24--39, London Math.
Soc. Lecture Note Ser., 330, Cambridge Univ. Press, Cambridge, 2006.
 


\bibitem[For02]{forn} Fornasiero M.,
Projective Version of Malcolmson's Criterion//
Ann. Univ. Ferrara, Sci. Mat., vol. 48 (2002), iss. 1, pp. 119--131.


\bibitem[GaZ67]{gabr} Gabriel P., Zisman M.,  Calculus of fractions and homotopy
theory. Ergebnisse der Mathematik und ihrer Grenzgebiete, Band 35.
Springer-Verlag New York, Inc., New York, 1967.

\bibitem[Ger82]{gera} Gerasimov V.N.,  Localization in associative rings
(Russian)// Siberian  Math. Zh. 23--6 (1982), 36-54.

\bibitem[Heb11]{hebpo} H\'ebert D., Structures de poids \'a la Bondarko sur les
motifs de Beilinson// Compositio Mathematica, vol. 147, is. 5, 2011,
1447--1462.


\bibitem[Ivo07]{flo} Ivorra F. 
R\'ealisation  l-adique des motifs triangul\'es g\'eom\'etriques. I// 
Doc. Math. 12 (2007), 607--671.


\bibitem[Kah12]{kap} Kahn B., personal communication, 2012--2013.



\bibitem[KaS02]{kabir} Kahn B., Sujatha R., Birational motives, I,
preprint, \url{http://www.math.uiuc.edu/K-theory/0596/}


\bibitem[KaS08]{kabiratgeom} Kahn B., Sujatha R.  (with appendices by Jean-Louis
Colliot-Th\'el\'ene and Ofer Gabber), Birational geometry and localization of
categories, preprint, \url{http://arxiv.org/pdf/0805.3753.pdf}


\bibitem[KaS09]{kabiradd} Kahn B., Sujatha R.,  Birational motives, I: pure
birational motives, preprint,
\url{http://arxiv.org/abs/0902.4902}





 


\bibitem[Mal82]{malc}  Malcolmson P., Construction of universal matrix
localizations, Lecture Notes in Mathematics, vol. 951 (1982), 117--132.



\bibitem[Nee01]{neebook} Neeman A.
 Triangulated Categories. Annals of Mathematics Studies
148 (2001), Princeton University Press, viii+449 pp.



\bibitem[NeR04]{neeran} Neeman A., Ranicki A., 
Noncommutative localisation in algebraic K-theory I// Geometry \& Topology,
vol. 8, 2004,  1385--1425.


\bibitem[Pau08]{konk} Pauksztello D., Compact cochain objects in
triangulated categories and
co-t-structures//     Central European Journal of Mathematics,
vol. 6, n. 1, 2008, 25--42.


\bibitem[Pos11]{posat} Positselski L., Mixed Artin-Tate motives with finite
coefficients// Mosc. Math. J., vol. 11, 
is. 2, 2011, 317--402.

\bibitem[Sch85]{scho} Schofield A. H.,
Representation of rings over skew fields. 
London Mathematical Society Lecture Note Series, 92. Cambridge University
Press, Cambridge, 1985. xii+223 pp.


\bibitem[Voe00]{1} Voevodsky V. Triangulated categories of motives over a field, in:
 Voevodsky V.,  Suslin A., and  Friedlander E.,
Cycles, transfers and motivic homology theories, Annals of
 Mathematical studies, vol. 143, Princeton University Press,
 2000,  188--238.




\bibitem[Wil09]{wild} Wildeshaus J., Chow motives without projectivity//
Compositio Mathematica, v. 145(5), 2009, 1196--1226.





\end{thebibliography}
\end{document}